\newtheorem{theorem}{Theorem}
\newtheorem{definition}[theorem]{Definition}
\newtheorem{lemma}[theorem]{Lemma}
\newtheorem{proposition}[theorem]{Proposition}
\newtheorem{remark}[theorem]{Remark}
\newtheorem{conjecture}[theorem]{Conjecture}
\title{Explicit examples in Ergodic Optimization}
\author{ Hermes H. Ferreira, Artur O. Lopes and Elismar R. Oliveira\\ Inst. Mat - UFRGS, Brazil \\ }
\begin{document}
\maketitle

\begin{abstract} Denote by $T$ the transformation $T(x)= 2 \,x $ (mod 1). Given a potential $A:S^1 \to \mathbb{R}$   we are  interested in exhibiting in several examples  the  explicit expression for the calibrated subaction $V: S^1 \to \mathbb{R}$ for $A$. The action of the $1/2$ iterative procedure  $\mathcal{G}$,  acting on continuous functions $f: S^1 \to \mathbb{R}$, was analyzed in a companion paper. Given an initial condition $f_0$, the sequence, $\mathcal{G}^n(f_0)$ will converge to a subaction. The sharp numerical evidence obtained from this iteration  allow us to guess  explicit expressions for the subaction in several worked examples: among them for  $A(x) = \sin^2  ( 2 \pi x)$ and $A(x) = \sin  ( 2 \pi x)$. Here, among other things, we present  piecewise analytical expressions for several calibrated subactions. The iterative procedure can  also be applied to the estimation of the joint spectral radius of matrices.
We also analyze the iteration of $\mathcal{G}$ when the subaction is not unique. Moreover, we briefly present the version
 of the $1/2$ iterative procedure for the estimation of the main eigenfunction of the Ruelle operator.
\end{abstract}

\section{Introduction} \label{gal}

Here we will present several examples in Ergodic Optimization where one can exhibit the maximizing probability and the subaction. The  $1/2$ iterative procedure is a tool (in some cases) for the corroboration of what is calculated or a helpful instrument to get important information. Comment about this last point: suppose someone in a specific example (not covered by the examples described in the present text) does not know the explicit expression for the maximizing probability and the subaction.  We want to show, through several worked examples, how one can proceed (using the  $1/2$ iterative procedure)  in order to try to get explicit information.	

Denote by $T:S^1 \to S^1$ the transformation
	$T(x)= 2\, x$ (mod 1).
We also denote by $\tau_1:[0,1) \to [0,1/2)$ and $\tau_2: [0,1) \to [1/2,1)$ the two inverse branches of $T$.	
\begin{definition}\label{max}  For a continuous function $A:S^1\to \mathbb{R}$ we denote the maximal ergodic  value  the number
$$m(A)= \sup_{ \rho\, {\rm \;is\; invariant\; for\;}\, T} \int A \, d \rho.$$
Any invariant probability $\mu$ which attains such 	supremum is called a {\bf  maximizing probability}
\end{definition}

For general  properties of maximizing probabilities see \cite{BLL}, \cite{G1}, \cite{J1}, \cite{CLT} and \cite{leplaideur-max}. A recent survey by O. Jenkinson (see \cite{J2})  covers the more recent literature on the topic.
We will assume here in most of the cases  that  $A$ is at least H\"older continuous. The results we consider here  can  also  be applied  to the case when $A$ acts
on the interval $[0,1]$ (non periodic setting).
	\begin{definition}\label{sub1}  The union of the supports of all the maximizing probabilities  is called the  Mather set for $A$.
\end{definition}
The maximizing probability does not have to be unique.

	\begin{definition}\label{sub} Given  $A:S^1\to \mathbb{R}$, then
		a continuous function $V: S^1 \to \mathbb{R} $ which satisfies for any $x\in S^1$:
		
		\begin{equation}\label{c} V(x)=\max_{T(y)=x} [A(y)+ V(y)-m(A)]\end{equation}
		is called a
		calibrated subaction for $A$.
	\end{definition}
From an explicit calibrated subaction one can guess where is the support of the maximizing probability (see important property below).  The subaction also provides important information for computing the
deviation function when temperature goes to zero in Thermodynamic Formalism (see \cite{BLT}); see also   \cite{BLL}, \cite{BLL1}, \cite{BGT}, \cite{GT}, \cite{Lep}, \cite{Bre}, \cite{BLM}, \cite{LM} for zero temperature limits.

Defining a  new function $R$  we get
 \begin{equation}\label{x1} R(x)\,:=\, V(T(x)) -  V(x) - A(x) + m(A) \geq 0.\end{equation}
One can show that for all points $x$ in the Mather set $R(x)=0.$ {\bf An important property is:}  if an invariant probability has support inside the set of points where $R=0$, then, this probability is maximizing (see \cite{CLT} or \cite{Bou1}). An interesting generic property related to the important property is described in \cite{GLT})

If the potential $A$ is H\"older and the maximizing probability is unique then the calibrated subaction is unique up to adding constants. Generically, in the H\"older class  the maximizing probability has support on a unique periodic orbit (see   \cite {Con}, \cite{CLT}). Similar properties  on the $C^0$ class are not true (see \cite{J1},  \cite{Bou2}, \cite{Mor} and \cite{Tal}).

Most of the questions in Ergodic Optimization are analyzed under certain premises: a) when  the potential $A$ is just continuous, and, b) when  it is assumed some regularity (as Lipschitz or H\"older) for $A$.
The two cases are conceptually distinct: in the first case, generically, the maximizing probability has support on the all space (see \cite{Bou2} and \cite{J2}) and in the second case, generically, the support has support on a periodic orbit (see \cite{Con}  and \cite{CLT}). In case a), generically, subactions are of no help. It is in case b)  that subactions are of great help for identifying the support of the maximizing probability. 

Given a H\"older potential $A$ we are interested in obtaining explicit expressions for the associated calibrated subaction $V$, and also for $m(A)$. We will do that with the help of the  $1/2$ iterative procedure described in the companion paper \cite{FLO}. In the case the maximizing probability is unique (a generic property) the iteration procedure will converge to the subaction $V$, the initial condition does not matter.

\begin{definition} In the set of continuous functions from $S^1$  to $\mathbb{R}$ we denote by $\sim$ the equivalence relation $f \sim g$, if $f-g$ is a constant.
The set of classes is denoted by $\mathcal{C}$ and, by convention, we will consider in each class a representative that has supremum equal to zero.
\end{definition}

\begin{definition}\label{op1}  Given  $A:S^1\to \mathbb{R}$ we consider the operator $\mathcal{G}= \mathcal{G}_A: \mathcal{C} \to \mathcal{C}$, such that, for $f: S^1 \to \mathbb{R} $, we have   $\mathcal{G}_A(f)=g$, if
		
$$\mathcal{G}_A(f)(x)=   g(x)=\frac{\max_{T(y)=x} [A(y)+  f(y)] + f(x)}{2} \,\,\,\,-$$
\begin{equation}\label{c1} \sup_{s\in S^1} \frac{\max_{T(r)=s} [A(r)+  f(r)] + f(r)}{2},
\end{equation}
for any $x\in S^1$. The procedure defined by the iteration $\mathcal{G}^n (f_0),$ $n \in \mathbb{N}$,  will be called the $1/2$  iterative procedure.
\end{definition}

It is known - in the case where the calibrated subaction is unique (up to adding constant) -  that
given any $f_0\in \mathcal{C}$, it will follow that  $\displaystyle\lim_{n \to \infty} \mathcal{G}^n (f_0)=u$,
where $u$ is the calibrated subaction  on the set $\mathcal{C}$  (see   \cite{FLO}).
This follows from results concerning a general type of iterative procedure (taking the advantage of the $1/2$ factor) discussed for instance in \cite{Ma1}, \cite{Dot2} or \cite{Ishi} (versions of this kind of result appeared before in the literature in different forms).  $\mathcal{G}$ is a weak contraction but not a strong contraction.

In the case where there is more than one maximizing probability more than one calibrated subaction may exist (see \cite{GL1}). In this case, there are different basins of attractions (see Seection \ref{more}) associated to different subactions (depending where one begins - the initial condition $f_0$ -  the iteration of the  $1/2$ iterative procedure). 

We point out the interesting papers \cite{Bou1} and \cite{AJ} where an estimation of the support of the maximizing probability is obtained for a certain class of potentials (but not using the approach described by \eqref{c1}).

Several of the pictures of the graphs of different subactions $V$ we will present here  were obtained by iterating the operator $\mathcal{G}$ applied to the initial function $f_0=0$. The  $1/2$ iterative procedure defined by the approximation of $V$ via  $\mathcal{G}^n (f_0)$ provides very sharp results and this will help us to get explicit examples of subactions. In some of the examples we consider the potentials $A(x) = \sin^2 ( 2 \pi x)$ (section \ref{ggu}) and $A(x) = \sin (2 \pi x)$ (section \ref{kk}).

The joint spectral radius is a generalization of the classical notion of spectral radius of a matrix, to sets of matrices.
The concept was introduced in 1960 by G-C. Rota and G. Strang. Several different kinds of algorithms were proposed for the joint spectral radius computability. In \cite{Conze} and in \cite{JP} the authors describe an interesting connection of this concept with Ergodic Optimization.
The analysis of the maximizing probability on the case of estimation of the spectral radius (which requires the calculus of  $m(A)$ for a certain potential $A$) will be considered here in section~\ref{jsp}.

We analyze the case where there is more than one calibrated subaction in section \ref{more}. Depending on the initial condition $f_0$ the iteration
$\mathcal{G}^n (f_0)$, when $n \to \infty$, may converge to different subactions. We also investigate the influence of the flatness of the potential on the flatness of the subaction.
In this section we just plot the graphs we get from the numerical iteration and we do not provide mathematical proofs.

We  consider in section~\ref{Cantor}  the case where
$A(x)=-d(x,K)$, and $\displaystyle d(x,K)=\min_{k\in K}|x-k|$ and $K\subset[0,1]$ is the Cantor.
We present some conjectures but we do  not provide mathematical proofs.  We believe is interesting for future work to know what one would expect in this case.

An example of a potential $A$ which is equal to its subaction $V$ is presented in section~\ref{equal}.

In section \ref{Ru} we will show how to adapt the  $1/2$ iterative procedure for estimating the main eigenfunction of the Ruelle operator.

In the Appendix \ref{app} we present the proof of some more technical results discussed before.

\begin{figure}[h]
  \centering
  \includegraphics[height=3cm, scale=0.5]{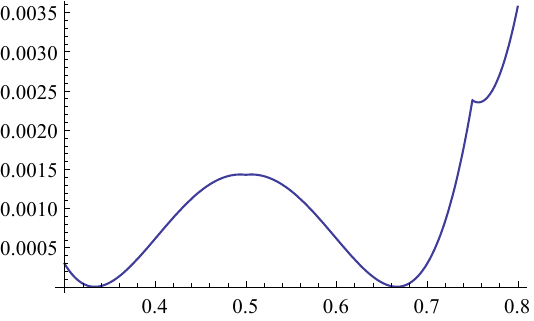}
  \caption{ Case $A(x) = - (x-1/3)^2 (x-2/3)^2 $ and $T(x)= 2 \, x $ (mod 1) - This picture shows the graph (plotted on Mathematica) of $R$ (see \eqref{x1}) obtained from an approximation of the calibrated subaction $u$ after $7$ iterations of the $1/2$ iterative procedure. We can infer from this figure (and the important property for $R$) that the maximizing probability has support on the periodic orbit $\{1/3,2/3\}$ as expected. Therefore, this iterative procedure  has the potential to display the support of the maximizing probability. }
  \label{fig:RR}
\end{figure}

As an example of the kind of result we can get with our methods we show in Figure \ref{fig:RR} (for the where case   $A(x) = - (x-1/3)^2 (x-2/3)^2 $ and $T(x)= 2 \, x $ (mod 1))  the graph of $R$ (obtained from the calibrated subaction $u$ we can get via the  $1/2$ iterative procedure).  Therefore, the  $1/2$ iterative procedure we will consider here can eventually exhibit  the support of  maximizing probabilities - via the function $R$ and the important property we mentioned before.

When the potential $A$ is analytic the subaction can sometimes be expressed as
\begin{equation}\label{eq:01}
V(x)= \sup \{V_1(x), V_2(x),..., V_r(x)\} ,\end{equation}
where $r>0$ and  $V_j$, $j\in\{1,2,..,r\}$, are analytic functions.

The number $r$ is equal to the period of the maximizing probability. It is of great significance to be able to estimate this number $r$ in order to get explicit solutions for $V$ (and, so to $m(A)$).  Expressions like  (\ref{eq:01}) are known to be true under the twist condition in several examples as described in the papers \cite{LOS} and \cite{LOT}. There the results were obtained via the use of the involution kernel and  techniques of Ergodic Transport.

Here most of the time the potential $A$ is of H\"older type.

We will follow next a certain general line of reasoning that will produce several explicit examples (see Section \ref{ees}). The main idea is: we assume some properties suggested by the graphs that we get  on the computer and then we develop some heuristic computations. In this way we are led to certain (piecewise) analytical expressions (each piecewise expression will be denoted by $V_j$) as in the above equation (\ref{eq:01}) for $V$. Then, finally, we check by hand if this expression for $V$ satisfies the calibrated subaction equation. We will elaborate on that: we will get recursive relations  of the form
\begin{equation} \label{ccp} V_j(x)+m(A)=V_{j+1}(\tau_i(x))+A(\tau_i(x)),\end{equation}
$i\in{1,2}, j=1,2,...r$, among the several $V_j$.

We explore these relations for deriving the candidates for being the different $V_j$.
Note that if one can get explicitly for  one of the subindices, let's say $j_0$, the expression for $V_{j_0}$, then, one can also get the others. In this way, we will get the final expression for  $V$, via expression (\ref{eq:01}). In this procedure, of course, we will also derive the value of $m(A).$ In some cases, the subaction has a series expression (see for instance (\ref{sisi})).

 From the historical point of view on the topic of Ergodic Optimization, it is needed to say that one of the first works on this subject was \cite{Conze} - a 1993 preprint that was not published. Among several results, the authors exhibit explicitly  the maximizing measure for the potential $A(x)=\sin^2(2 \pi x)$ (in Section \ref{ggu} we will achieve this result from an explicit expression of the subaction). The papers \cite{HO1} and \cite{HO2}  in turn had a more specific purpose: the analysis of optimal periodic orbits. The paper \cite{GS}  considered Markov chains with infinite states and asymptotically equilibrium measures (which are currently also  known as ground states) which are limits of Gibbs states when the temperature goes to zero. In \cite{Sav} the author considers Markov chains with finite symbols and a version of the subcohomology equation. The theory got more momentum when results similar to those on the Aubry-Mather theory (see \cite{GI}, \cite{Fathi} and \cite{Mane}) were obtained in a more systematic way. It is important to highlight a fundamental difference between these two theories: in the Aubry-Mather Theory, the convexity of the Lagrangean plays a fundamental role in the proofs of several results. The twist condition for the involution kernel  in some sense plays the role of convexity in Ergodic Optimization. The subaction (Definition \ref{sub}) corresponds in the Aubry-Mather theory to subsolutions of the Hamilton-Jacobi equation of Classical Mechanics.

The terminology Ergodic Optimization was established after the publication of the survey paper \cite{J1} by Oliver Jenkinson.

Explicit expressions for the subaction appeared previously in the literature in a few cases (for example in \cite{BLM}). Techniques of the max-plus algebra were used in section 7 in\cite{BLL} for this purpose.  Section 5 in \cite{LOT} presented several results  where the final expression was derived  from the associated involution kernel.
In \cite{BLL1} the expression was obtained via  techniques  related to the  Peierls’   barrier (see Lema 2.2).

\section{The case  $A(x)=-(x-\frac{1}{3})^2$} \label{pri}

Consider the potential $A(x)=-(x-\frac{1}{3})^2$.
We will present the explicit expression for $V$ on this case (which was not known before). Later we compare the explicit expression with the graph we get via the $1/2$ iterative procedure.

Note that such $A$ is not periodic on $[0,1]$. Therefore, we consider in this subsection that $T(x)= 2\,x$ (mod 1) acts on $[0,1]$. Consider also the inverse branches of $T$ given by $\tau_1(x)=\frac{x}{2}$ and $\tau_2(x)=\frac{x+1}{2}$. It is known from \cite{J3} that the maximizing probability in this case is Sturmian.

Looking Figure \ref{fig:my_label10} which we get from the  $1/2$ iterative procedure it is natural to assume the existence of $V_1, V_2,V_3,V_4$, such that
$$V_1(x)+m(A)=V_3\circ \tau_2(x) + A\circ \tau_2(x),\,V_2(x)+m(A)=V_1\circ \tau_1(x) + A\circ \tau_1(x),$$
\begin{equation} \label{olu} V_3(x)+m(A)=V_2\circ \tau_1(x) + A\circ \tau_1(x)\,\\,  V_4(x)+m(A)=V_3\circ \tau_1(x) + A\circ \tau_1(x).\end{equation}

The function $V_1$ is a continuation of $V_4$ when we look these functions $V_j$ as defined on $S^1$ (periodic). Equation (\ref{olu}) suggests that the maximizing probability has support  on an orbit of period three.
Note that $\frac{A(1/7)+A(2/7)+A(4/7)}{3}=-2/63$.

As $A$ is a polynomial of degree two is natural to try to express $V$ on the form
$V(x)=\sup \{\,V_i(x), i=1,2,3,4\, \}=\sup \{\,a_i+b_ix+c_ix^2,\, i=1,2,3,4\,\}$
for some choices of $a_i,b_i,c_i,$ $i=1,2,3,4$.
Assuming each $V_i(x) =\,a_i+b_ix+c_ix^2$ we can convert the four equations (\ref{olu}) in a linear system that can be easily solved.
From this procedure, we get \(m(A)=-2/63\). Moreover, we obtain
$V_1(x) =\frac{10}{63} - \frac{2x}{21}-\frac{x^2}{3}, V_2(x) =\frac{5}{63} + \frac{2x}{7}-\frac{x^2}{3}, V_3(x) =\frac{10x}{21}-\frac{x^2}{3},\text{ and } V_4(x) =-\frac{5}{63} + \frac{4x}{7}-\frac{x^2}{3}$.
A tedious calculation confirms that the $V$ we obtained from $ V(x)= \sup \{V_1(x), V_2(x),V_3(x), V_4(x)\} ,$
is really the calibrated subaction (with maximum value zero) for such $A$. In Figure \ref{fig:my_label11} we compare the graph of the approximated calibrated subaction obtained from the $1/2$ iterative procedure (in red) and the exact analytic expression for $V$ we obtained above (in blue). We have a perfect match. With 15 iterations of the $1/2$ iterative procedure, we get a good approximation of  $V$ (which was analytically obtained above) .

\begin{figure}[h] \center\includegraphics[, height=3cm,scale=0.35]{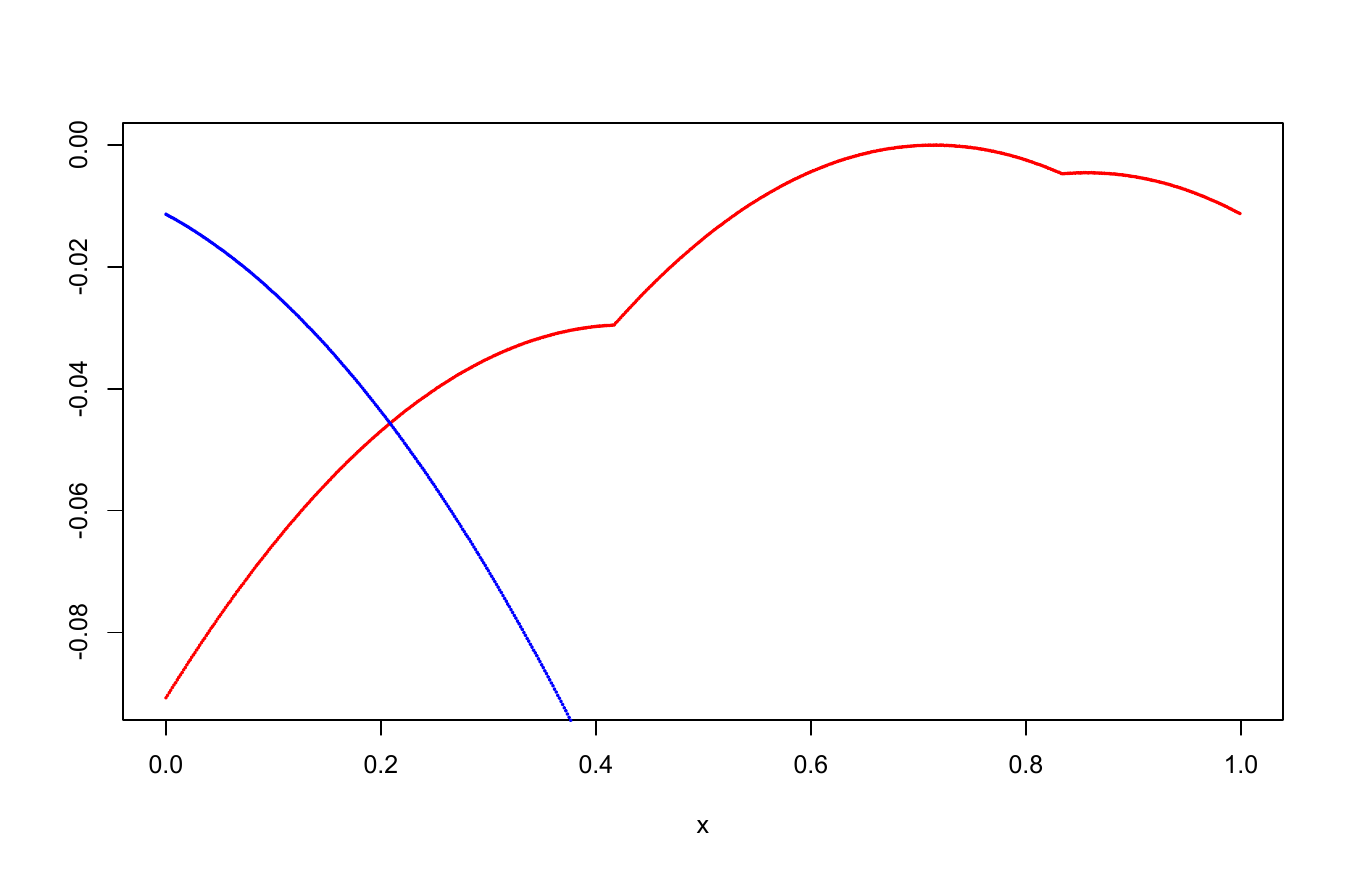}
	\caption{Case $A(x)=-(x-\frac{1}{3})^2 $ - The blue graph  describes the values of the approximation of the calibrated subaction $V$ where the $1/2$ iterative procedure detect that the realizer branch was $\tau_2$.  The red graph  describes the values of the approximation of $V$ where the $1/2$ iterative procedure detect that the realizer branch was $\tau_1$. The graph for the approximation of $V$ is the supremum of the two curves.  We iterate 15 times  $\mathcal{G}$ to get this picture.}
	\label{fig:my_label10}
\end{figure}
\section{An example for a weakly expanding system} \label{WKe}

This is an example where the exactly calibrated subaction $V$ is known. We will show that the $1/2$  iterative procedure performs fine in this case.

Consider $f:[0,1]\to [0,1]$, where
$$\left\{\begin{array}{l}
f(y)= \frac{y}{1-y},\mbox{ if },\, 0\leq y \leq \frac{1}{2},\\
f(y)= 2- \frac{1}{y},\mbox{ if }, \, \frac{1}{2}< y\leq 1,\\
\end{array}
\right.$$
%\end{eqnarray}

\begin{figure}[h] 	
	\centering
	\includegraphics[height=3cm, scale=0.35]{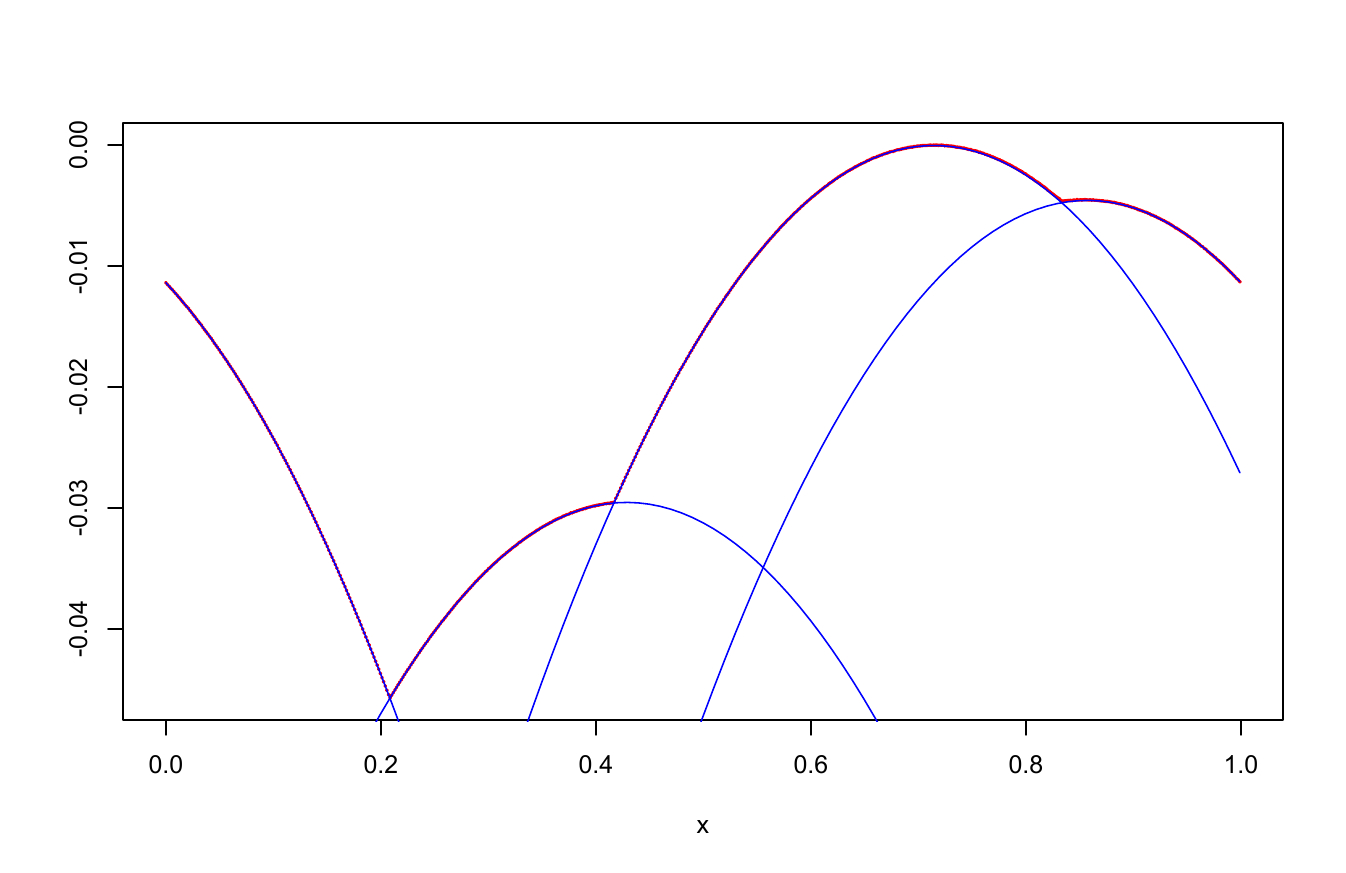}
	\caption{Case $A(x)=-(x-\frac{1}{3})^2 $ - In red we present the graph of the approximation of the calibrated subaction $V$ via the
		 $1/2$ iterative procedure. The picture in blue show the graphs of the different $V_j$, $j=1,2,3,4$.}
\label{fig:my_label11}
\end{figure}

and the potential $A(y) =  \log f '(y)$, where $f'$  is given by the
expression
$$\left\{\begin{array}{l}
f'(y)= \frac{1}{(1-y)^2},\mbox{ if },\, 0\leq y \leq \frac{1}{2},\\
f'(y)= \frac{1}{y^2},\mbox{ if }, \, \frac{1}{2}< y\leq 1.\\
\end{array}
\right.$$

The equation  for the calibrated subaction $V$ is
\begin{equation}\label{cf} V(x)=\max_{f(y)=x} [A(y)+ Vy)-m(A)].\end{equation}
We want to find the explicit calibrated subaction $V$ associated to  $A$ and also the value $m(A)$. The two inverse branches for $f$ are $\tau_1 (x) = \frac{x}{1+x}$ and
$\tau_2 (x) = \frac{1}{2-x}$. Consider $x_0= \frac{\sqrt{5}-1}{2}$ which is such that $f^2 (x_0)=x_0$ and $x_1=f(x_0).$
The maximizing probability for $A$ is $\frac{1}{2} ( \delta_{x_0} + \delta_{x_1})$.
Therefore,
$m(A)=\frac{1}{2}\left(A\left(\frac{\sqrt{5}-1}{2}\right)+A\left(\frac{\sqrt{5}-1}{\sqrt{5}+1}\right)\right)$.

Denote $F(y,x)$ the canonical natural extension of $f(y)$. The expression for the
transformation $F:[0,1]^2 \to [0,1]^2$ is described on the Appendix \ref{fixind}.

We say   that $W(x,y)$ is an involution kernel for $A(x)$  (see \cite{CLT}, \cite{LOS}, \cite{LOT}), if there is a $A^*(y)$, such that, for all $(y,x)$ we have:
$$A(F^{-1}(y,x))+    W( F^{-1}(y,x) )- W(y,x) =A^* (y). $$
We say that $A$ is symmetric if
$A^{*} (x) = A(x).$ This will be the case here.
The involution kernel for $A$ is $W(x,y)=2  \log(x + y - 2 x y)$ (see Appendix \ref{fixind}). Take
\begin{equation} \label{We} V(x) =  \sup \{ W(x_0,x), W(x_1,x)\}.
\end{equation}

\begin{figure}[h]
    \centering
    \includegraphics[height=5cm,scale=0.35]{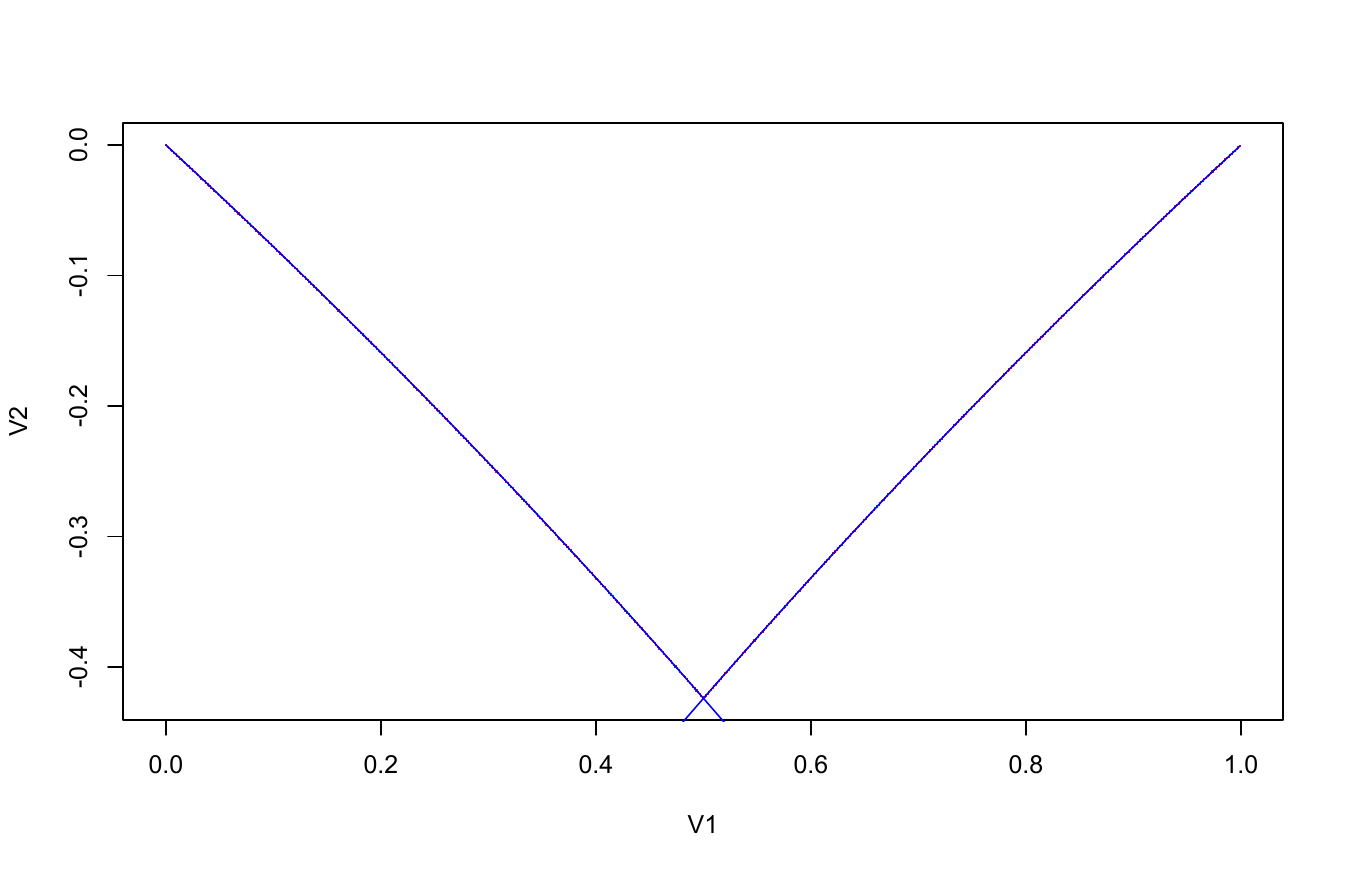}
    \caption{In red the graph of the  approximation of the calibrated subaction  we get from the  $1/2$ iterative procedure and in blue the two graphs of, respectively, $x \to W_A(x_0,x),$ and $x \to W_A(x_1,x)$.  The exact calibrated subaction $V$ (obtained analytically)  is $ V(x) =  \sup \{ W_A(x_0,x), W_A(x_1,x) \}$. The graph in red obliterates the ones in blue in a big part of the picture.}
    \label{fig:log_graf_analitico}
\end{figure}

One can show (a simple computation)  that such $V$ is the calibrated subaction for $A$.
In several examples the calibrated subaction has this form  (\ref{We})  (see  example 5 in pages 366-367 in \cite{LOT}).

Figure \ref{fig:log_graf_analitico} shows in red the graph of the  approximation we get from the  $1/2$ iterative procedure and in blue the two graphs of, respectively, $x \to W_A(x_0,x),$ and $x \to W_A(x_1,x)$.
This involution kernel is twist (see \cite{LOT} and \cite{LOS} for properties), that is, $\frac{\partial^2 W_A(x,y)}{\partial_x\, \partial_y}\leq 0$. When the potential $A$ is such that the associated involution kernel is twist some special properties can be obtained. This property replaces in some sense the convexity property which is essential in Aubry-Mather Theory.

The numerical values we get are  $x_0=0.3819...$, $x_1=0.6180...$, and $m(A) = \frac{ 1}{2}(A(x_0)+ A(x_1))= 0.9624...$.
We point out that if one considers instead  the potential $A(x) = - \log(f')$ then its associated
involution kernel is  $W(x,y)= - 2  \log(x + y - 2 x y)$.
In this case $m(A)=0.$ The maximizing probability $\mu$  has support on the set $\{0,1\}$.
This means that the support  of  $\mu$ is the union of two fixed points: $p_0=0$ and $p_1=1$ (when we consider that $f$ acts on $[0,1]$).
One can show that in a similar way as before the calibrated subaction $V$ is given by
\begin{equation} \label{Wee} V(x) =  \sup \{ W(p_0,x), W_A(p_1,x)\}.
\end{equation}

\section{A procedure to get piecewise analytic expressions} \label{ees}

In some examples we have to proceed in a different way from the previous one. We  will look for a way to express such initial $V_j$ via the relation
\begin{equation}\label{eq:1}
   V_j(x)-V_j(\eta(x))=F(x)-K,
\end{equation}
where $F$ and $\eta$ are functions and $K= N\,m(A)$, where $N$ is the period of the  maximizing orbit, $j=1,2,...,N$. In our examples $N=r$ ($S^1$ point of view), or, $r=N+1$ ($[0,1]$ point of view). The function $F$ will be chosen according to convenience in each kind of example. The value $K$ is a fixed variable on the process of trying to find the
calibrated subaction. We use the notation $\hat{m}(A) = \frac{K}{r}$ to express the fact  that we do not know  a priori the exact value $m(A)$ but in the end we will show that
 $m(A)= \hat{m}(A).$ We point out that  from \cite{BCLMS} we have the following property:  given $A$ and $V$, if we know   that for some constant $c$
\begin{equation} \label{fla} V(x)=\max_{T(y)=x} [A(y)+ V(y)-c],
\end{equation}
then, $V$ is a calibrated subaction and $c=m(A).$
We assume $\eta:[0,1]\rightarrow[0,1]$ is such that \begin{equation*}\eta^n:=\underbrace{\eta\circ\eta\circ...\circ\eta}_{n-times}\end{equation*}
satisfies $\displaystyle\lim_{n\rightarrow \infty}\eta^n(x)=q$ for some fixed point $q\in[0,1]$.
This indeed will happen in some of the examples we will consider.
Note that (\ref{eq:1}) implies
\begin{equation}\label{eq:2}
    V_j\circ \eta(x)-V_j\circ \eta^2(x)=F\circ \eta(x) - K.
\end{equation}
If $q$ is fixed by $\eta$ we get $F(q)=K$.
Therefore, adding (\ref{eq:1}) e (\ref{eq:2}) we get
\begin{equation}\label{eq:3}
    V_j(x)-V_j\circ \eta^2(x)=F(x)+F\circ \eta(x) - 2K.
\end{equation}
We can go on and inductively obtain for each $n$ in $ \mathbb{N}$,
\begin{equation}\label{eq:4}
    V_j(x)-V_j\circ \eta^{n}(x)=\sum_{i=0}^{n-1}\left[ F\circ \eta^i(x) - \,K\right] .
\end{equation}
If $V_j$ is continuous we get
$\displaystyle \lim_{n\rightarrow \infty}V_j\circ \eta^n(x)=V_j(q)$. Using the notation $\eta^0(x)=x$ we obtain finally a series (which should be the expression of this $V_j$ we are looking for)
\begin{equation}\label{eq:5}
V_j(x)=\lim_{n \rightarrow \infty}\sum_{i=0}^{n-1}\left[ F\circ \eta^i(x)  - K \right]- V_j(q) .\end{equation}

We can consider the truncated approximation
\begin{equation}\label{eq:6}
    V_j^{n*}(x)=\sum_{i=0}^{n-1}[F\circ \eta^i(x) - \,K] - V_j(q).
\end{equation}

We can assume that $V(q)=0$.
In this way each $V_j$ should be  given by
 \begin{equation} V_j(x)=\lim_{n \rightarrow \infty}   V_j^{n*}(x)=  \lim_{n \rightarrow \infty}\sum_{i=0}^{n-1}  (F\circ \eta^i(x) - n\,K) = \label{rty}  \sum_{i=0}^{\infty}  (F\circ \eta^i(x) - \,K)  ,\end{equation}
$j=1,2,...,r$, where $r$ is the number of $V_j$. All this is dependent of the smart choices of $F$ and $\eta$. In each example we have to show that the above limits  $V_j, $ $j=1,2,...,r$,  indeed exist. Moreover, we have to show   that
\begin{equation}
V(x)= \sup \{V_1(x), V_2(x),V_3(x),..., V_r(x)\} ,\end{equation}
solves the the subaction equation (\ref{c}) for $A$. When  $F$ is analytic  (if $A$ is analytic this will be the case in most of our examples)    the expression (\ref{rty}) will provide
an analytic expression for $V_j,$ $j=1,2...,r.$ In this case $V$ will be piecewise analytic. More than that, in most of the cases, there is an analytic dependence of
$F$ on the analytic potential $A$ (see Remark~\ref{oldremark3}). Under appropriate conditions (on absolutely convergence, etc.)
this will provide an analytic dependence of the calibrated subaction $V(x)$ for $A$, in each point $x$, on the potential $A$. In the computational procedure to be followed for getting such $V_j$ one does not know in advance the value $m(A)$. When $F$ has Lipschitz constant equal $M$ we get the estimate $ |F\circ \eta^i(x)-F(q)| \leq M | \eta^i(x)-q|$. In some of the examples we will get uniform convergence because
$\sum_{i=0}^{+ \infty} M | \eta^i(x)-q|$
is uniformly bounded. In this way the series defining  $V_j$ converges uniformly. We will follow the above reasoning in several examples to be presented next.

%$\hfill \diamondsuit$

\section{The case  $A(x)=\sin^2(2 \pi x)$} \label{ggu}

Consider the periodic function
\(A(x)=\sin^2(2 \pi x)\),
\(T(x)=2\,x\,\text{mod}(1)\),
\(\tau_1(x)=\frac{x}{2}\),
\(\tau_2(x)=\frac{x+1}{2}\).
According to page 23 in \cite{Conze} the maximizing probability $\mu$ has support on the periodic orbit of period $2$ (the points $1/3$ and $2/3$). Therefore, we know that $m(A)= \frac{1}{2} (A(1/3) + A(2/3))\approx 0.75.$

In the graphs presented in Figure  \ref{fig:my_label} -  which were obtained from the $1/2$ iterative procedure we call
$V_2$ (blue color) the function we get when the realizer is on the branch $\tau_2$ and   $V_1$ (red color) the function we get when the realizer is on the branch $\tau_1$. The numerical result we get from the iterative procedure  shows the evidence (see Figure \ref{fig:my_label}) that  the  calibrated subaction $V$ should satisfy \begin{equation} \label{iopi} V(x)=\sup\,\{\,V_1(x),V_2(x)\,\}.\end{equation}
\begin{figure}[h]
    \centering
    \includegraphics[height=3cm, scale=0.3]{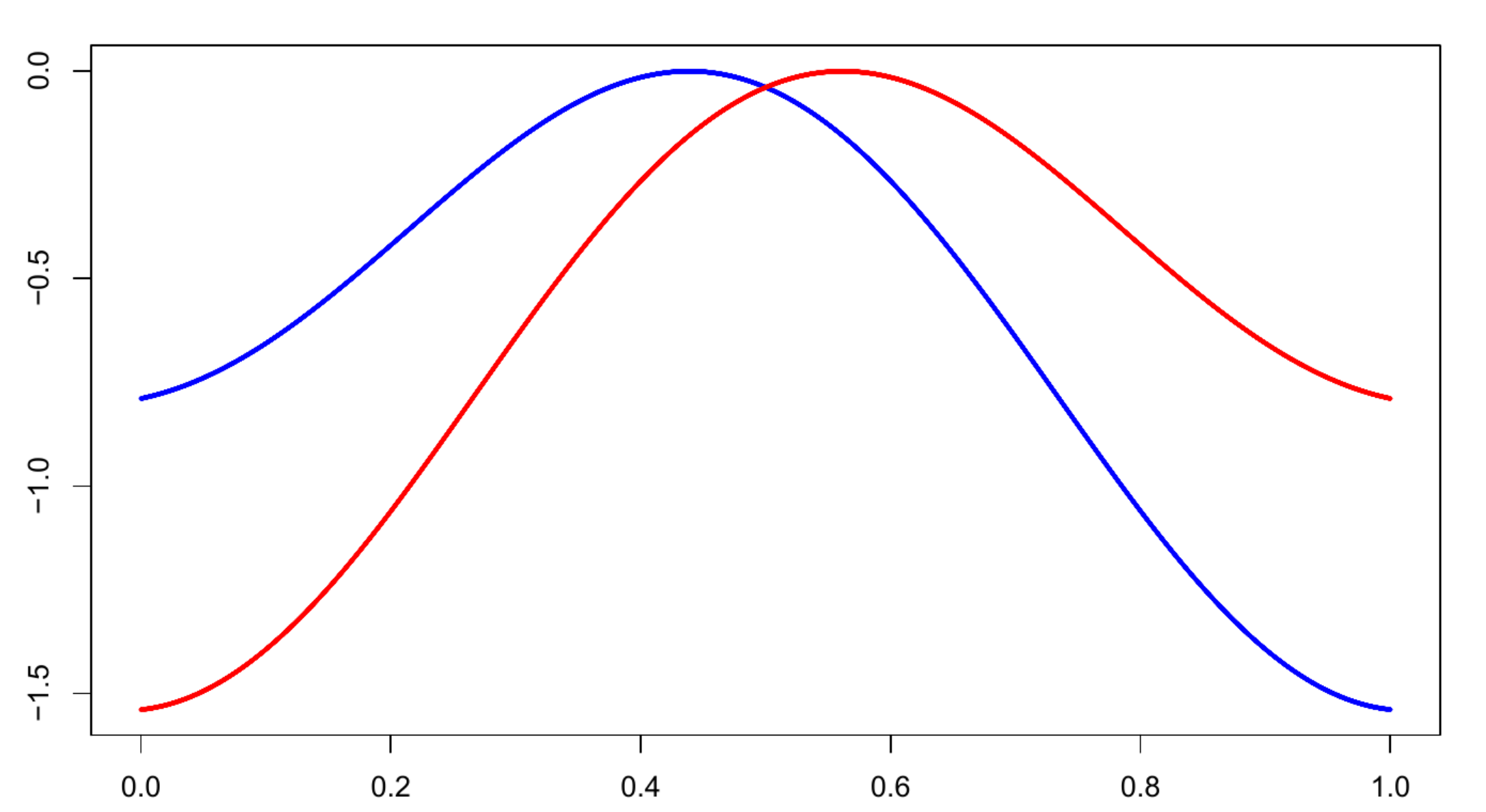}
    \caption{Case $\sin^2\,(2 \pi x)$ - From the $1/2$ iterative procedure  taking $\mathcal{G}^{20} (0)$  we get that the approximated subaction $V$   is  given by the supremum of the two functions in red and in blue. The graph in blue describe the values where the calibrated subaction equation is realized by the action of  $\tau_2$.  The graph in red describe the values where the calibrated subaction equation is realized by the action of  $\tau_1$.}
\label{fig:my_label}
\end{figure}
We will present an analytic expression for $V_2$.
We will show that
$$V_2(x) = \sum_{i=0}^{+\infty}\left[\,\,\sin^2\left(\pi\left(  \frac{2}{3} + \left(-\frac{1}{2}\right)^i(x-2/3) \right)\right)\,-\, \sin^2 (2 \pi/3) \,\,\right].$$
A power series expansion of $V_2$ around $2/3$ is presented in (\ref{power}). The expression for $V_1$ will follow from $V_1= V_2(1-x)$:
$$V_1(x)=  \sum_{i=0}^{+\infty} \left[\,\,\sin^2\left(\pi\left(  \frac{2}{3} - \left(-\frac{1}{2}\right)^i(1/3-x) \right)\right)-\, \sin^2 (2 \pi/3) \,\,\right].$$
This will finally  produce from (\ref{iopi}) the explicit expression for the subaction $V$ for such $A$. The proof that the above $V_1$ and $V_2$ are such that
$ V(x)=\sup\,\{\,V_1(x),V_2(x)\,\}$
is a calibrated subaction for $A$ will be done in Theorem~\ref{tete}. It follows from \cite{GL1} that in the case the potential $A$ has a symmetry of the kind $A(x)=A(1-x)$, then, the same property is true for the corresponding calibrated subaction. Then, $V_1(x)= V_2( 1-x).$ From this one can show that
$V_1(x)=V_2((x+1)/2)+A((x+1)/2)-\hat{m}(A).$ It is instructive to explain step by step our reasoning. The procedure can be applied to other examples.
\begin{figure}[h]
    \centering
    \includegraphics[height=3cm,scale=0.35]{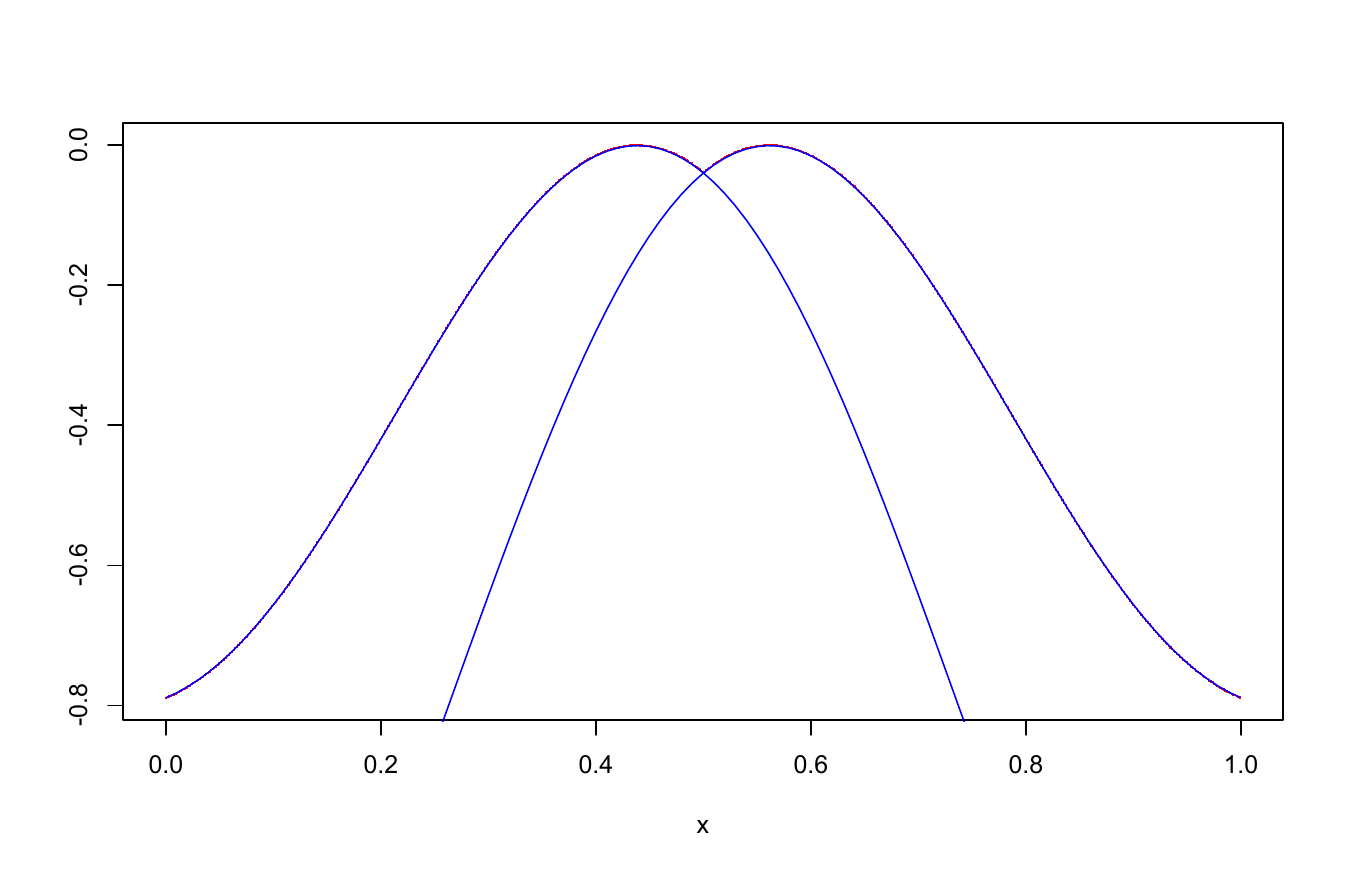}
    \caption{Case $\sin^2\,(2 \pi x)$ - The graph in red shows the numerical approximation of the subaction $V$ by $\mathcal{G}^{30} (0)$ with a discretization of $10^4$ points of the form $\frac{n}{10^4}$. In blue we show the graph of  $V_1^{4*}(x)$ e $V_2^{4*}(x)$
    (which approximate $V_1$ e $V_2$) according to (\ref{sin2_eta_series}).}
\label{gi3}
   \end{figure}
Following (\ref{ccp}) we assume the relation
\begin{equation}  \label{op1} V_2(x)+\hat{m}(A)=V_1(\tau_1(x))+A(\tau_1(x)),\end{equation}
and, also
\begin{equation}V_1(x)+\hat{m}(A)=V_2(\tau_2(x))+A(\tau_2(x)).\end{equation}

Therefore, by substitution
\begin{equation}\label{op2}   V_2(x)-V_2\left(\frac{x}{4}+\frac{1}{2}\right) = A\left(\frac{x}{2}\right)+ A\left(\frac{x}{4}+\frac{1}{2}\right)-2\,\hat{m}(A).\end{equation}
Taking \(\eta(x)=\frac{x}{4}+\frac{1}{2}\) and \(K=2\hat{m}(A)\), note that if $x \in [0,1]$, then
$\displaystyle\lim_{n\rightarrow +\infty}\eta^n(x)=2/3.$
Define
$F(x)= A(\frac{x}{2})+ A(\frac{x}{4}+\frac{1}{2}),$
then, by (\ref{eq:6})
$\displaystyle\lim_{n\rightarrow +\infty}F(\eta^n(x))=F(2/3)=A(1/3) + A(2/3)=2\hat{m}(A)$. We point out  that in the present case we already know from \cite{Conze} that the above $\hat{m}(A)=m(A).$
Note that $  F(x)=\sin^2(\pi x) + \sin^2(\pi x/2) $ is analytic.
\begin{remark}\label{oldremark3}
We point out that if we were considering another potential $A$ close to $\sin^2(2 \, \pi\, x)$, then, the reasoning we are going to consider below would apply in a similar way.  Note that $F$ depends nicely on $A$. In this case  (\ref{rty})  provides an analytical dependence of  $V$ on the nearby potential $A$.	
\end{remark}
The  $1/2$ iterative procedure produces the numerical approximation $m(A)\approx 0.75$.
We assume that $V(2/3)=0.$ Now we will express  $V_2$ - using (\ref{rty})  - up to constant via truncation
\begin{equation}\label{sin2_eta_series}
    V_2^{n*}(x)=\sum_{i=0}^{n-1}\,[\,F\circ \eta^i(x) - 2 m(A)\,].
\end{equation}
Figure \ref{gi3}  shows that for small values of $n$ one can get a good approximation of the subaction via $V_2^{n*}(x)$, $x\in [0,1]$.

\begin{proposition}\label{prop1} $\lim_{n \rightarrow +\infty}V_2^{n*}(x) $, $n \in \mathbb{N},$  given by (\ref{sin2_eta_series}), converges uniformly.
\end{proposition}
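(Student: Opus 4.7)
The plan is to apply the Weierstrass $M$-test after exploiting the contractive nature of $\eta$ and the regularity of $F$. Setting $q = 2/3$, the first step is to verify that $q$ is the unique fixed point of $\eta(x) = x/4 + 1/2$ on $[0,1]$, and that
\[
F(q) = \sin^2(2\pi/3) + \sin^2(\pi/3) = A(1/3) + A(2/3) = 2m(A) = K,
\]
so each summand is exactly $F(\eta^i(x)) - F(q)$.

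Next I would show that $\eta$ is a strict contraction on $[0,1]$ with Lipschitz constant $1/4$. An easy induction then yields
\[
|\eta^i(x) - q| \;\leq\; \left(\tfrac{1}{4}\right)^i |x - q| \;\leq\; \left(\tfrac{1}{4}\right)^i \cdot \tfrac{2}{3}
\]
uniformly in $x \in [0,1]$. Since $F(x) = \sin^2(\pi x) + \sin^2(\pi x/2)$ is smooth, it is Lipschitz on the compact interval $[0,1]$ with some constant $L > 0$ (which one can take, for instance, as the maximum of $|F'|$). Combining these bounds gives
\[
|F(\eta^i(x)) - K| = |F(\eta^i(x)) - F(q)| \;\leq\; L \cdot \left(\tfrac{1}{4}\right)^i \cdot \tfrac{2}{3} =: M_i,
\]
uniformly in $x \in [0,1]$.

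Finally, since $\sum_{i=0}^\infty M_i = \tfrac{2L}{3} \cdot \tfrac{1}{1 - 1/4} < \infty$, the Weierstrass $M$-test shows that the series
\[
\sum_{i=0}^{\infty} \big[ F\circ\eta^i(x) - K \big]
\]
converges uniformly on $[0,1]$, hence $V_2^{n*}(x)$ converges uniformly to a continuous limit.

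I do not foresee a serious obstacle here: the whole argument is a direct application of the general framework described in Section \ref{ees} (the estimate $|F\circ\eta^i(x)-F(q)|\leq M|\eta^i(x)-q|$ is stated there), and in this example the contraction rate of $\eta$ is as good as $1/4$. The only thing worth being careful about is the bookkeeping of constants and the check that $F(q)$ really equals the value $K = 2\hat m(A)$ used in the truncation, which is immediate from the explicit formula for $F$.
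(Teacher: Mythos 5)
Your argument is correct and is essentially the proof given in the paper: both rest on the identity $\eta^i(x)-2/3=4^{-i}(x-2/3)$, a Lipschitz bound for the summand (you use a Lipschitz constant for $F$ directly, while the paper splits $F$ into its two $\sin^2$ terms and uses the Lipschitz constant of $\sin^2$), and then a geometric-series bound independent of $x$, i.e.\ the Weierstrass $M$-test. The check $F(2/3)=\sin^2(2\pi/3)+\sin^2(\pi/3)=2\hat m(A)=K$ that you flag is exactly the point the paper also verifies, so there is no gap.
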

\begin{proof}
 We get $2\hat{m}(A)= \sin^2(2\pi/3)+\sin^2(\pi/3)$
and
\[|F \circ \eta^i(x)-2\hat{m}(A)|=\left |(\sin^2(\eta^i(x)\pi)-\sin^2(2\pi/3))+(\sin^2(\eta^i(x)\pi/2)-\sin^2(\pi/3)) \right |\]
\[\leq |\sin^2(\eta^i(x)\pi)-\sin^2(2\pi/3)| + |\sin^2(\eta^i(x)\pi/2)-\sin^2(\pi/3)|.\]
Moreover,
$\eta^i(x)=2/3\left( 1- \left(\frac{1}{4} \right)^i \right)+\frac{x}{4^i},$
which means
$\eta^i(x)-2/3= \frac{1}{4^i}\left(  x- 2/3\right).$
$\sin^2$ is  Lipschitz in $[0,1] $ for some constant $K$. Therefore,
$|\sin(x)-\sin(y)|\leq K|x-y|.$
Then,  $|\sin^2(\eta^i(x)\pi)-\sin^2(2\pi/3)| \leq |\eta^i(x)\pi-2\pi/3|= \frac{K\pi}{4^i}|x-2/3|$ and
$|\sin^2(\eta^i(x)\pi/2)-\sin^2(\pi/3)|\leq |\eta^i(x)\pi-\pi/3| \leq \frac{K\pi}{2}\frac{1}{4^i}|x-2/3|.$
From this
\[|\sum_{i=0}^{+\infty}(F\circ \eta^i(x)-2\hat{m}(A))| \leq \sum_{i=0}^{+\infty}|F\circ \eta^i(x)-2\hat{m}(A)|\]
\[\leq \sum_{i=0}^{+\infty}\left(\frac{K\pi}{4^i}|x-2/3| +\frac{K\pi}{2}\frac{1}{4^i}|x-2/3| \right)
\leq K\pi \sum_{i=0}^{+\infty}\frac{1}{4^i} < +\infty  .\]
\end{proof}

{\color{blue} .}
\begin{figure}
    \centering
    \includegraphics[height=3cm,scale=0.35]{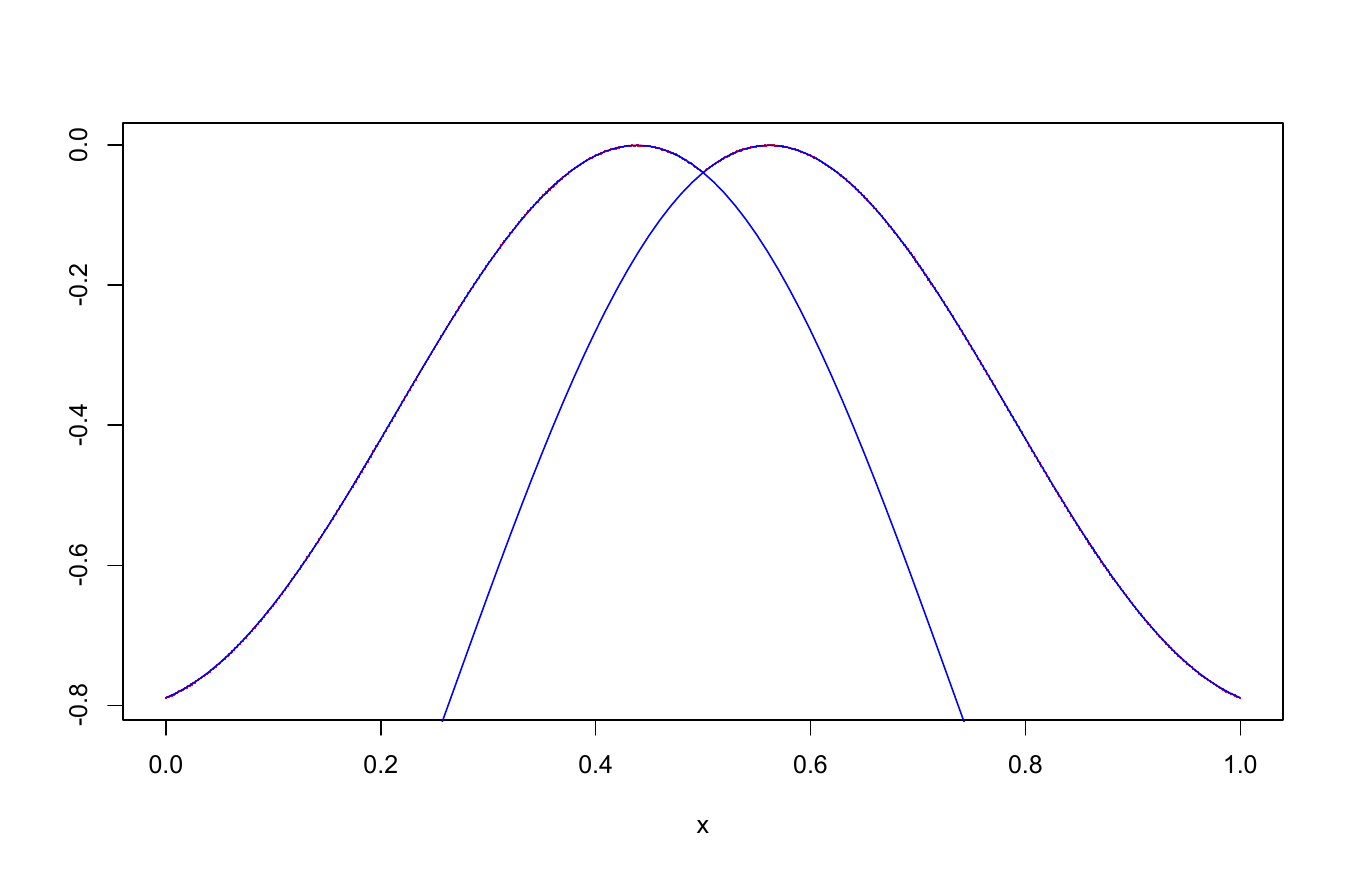}
    \caption{Case $\sin^2\,(2 \pi x)$ -  In red we show the graph of the approximation of the subaction $ V$ via the $1/2$  iterative procedure and in blue we show the result we get for  $V(x)=\sup \,\{V_1(x),V_2(x)\}$ via power series expansion truncated at order $10.$  That is, $V_2$ expressed by (\ref{khj})  and $V_1$ also in power expansion. As expected, they are virtually indistinguishable since the red portion agrees with the superior envelope of the blue curves.}
 \label{fig:my_label2}
\end{figure}

Denote \(\delta(x)=1-x/2\).
It is possible to get from the system  (\ref{op2}) that $V_1(x)=V_2(1-x)$ and $V_2(x)+m(A)=V_1(x/2)+A(x/2)$ we  get
$V_2(x)+m(A)=V_2(1-x/2)+A(x/2).$
As  $m(A)=A(2/3)$ and $\displaystyle\lim_{n\rightarrow +\infty}\delta^n(x)=2/3 $,  for
 $x \in  [0,1]$ we obtain
$V_2(x)-V_2(\delta(x))=A(x/2)-A(2/3)$.
From this we get
$V_2(x)-V_2(2/3)=\sum_{i=0}^{+\infty}\left( A(\delta^i(x)/2)-A(2/3) \right)$.
As \(V_2(2/3)= 0\), it follows that
$V_2(x)=\sum_{i=0}^{+\infty}\left( A(\delta^i(x)/2)-A(2/3) \right)$.
Finally, as $\delta^n(x+2/3)=\frac{2}{3}+\left( -\frac{1}{2}\right)^n\,x\,$, we obtain the expression
\begin{equation}\label{eq:sin2v}
V_2(x)=\sum_{i=0}^{+\infty}\left(\sin^2\left(\pi\left(  \frac{2}{3} + \left(-\frac{1}{2}\right)^i(x-2/3) \right)\right) -\sin^2(2\pi/3) \right).\end{equation}
The corresponding expression for $V_1$ can be obtained from the equality $V_1(x)= V_2( 1-x).$
\begin{figure}[h]
    \centering
    \includegraphics[scale=0.65]{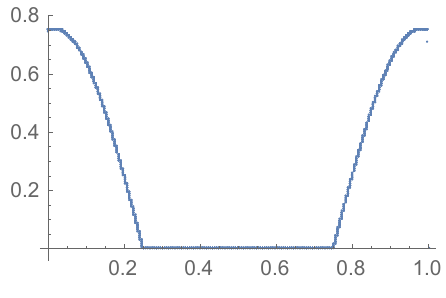}
    \caption{case $\sin^2(2 \pi x)$ - The graph of $R$  using the  approximation of the calibrated subaction. The orbit of period 2 is inside the set $R=0$.}
    \label{fig:RR7}
\end{figure}
We will show in the Appendix \ref{ggs}
that $ V(x)=\sup\,\{\,V_1(x),V_2(x)\,\} $
is a calibrated subaction for $A$.
Moreover, in the Appendix \ref{ggs} we will present a power series expansion around $2/3$  for $V_2$:
$$V_2(x)= \frac{\sin(4\pi/3)}{2}\sum_{k=0}^{+\infty}\frac{(-1)^k(2\pi  \left( x- \frac{2}{3} \right))^{2k+1}}{(2k+1)!} \frac{2^{2k+1}}{2^{2k+1}+1} -$$
\begin{equation} \label{khj} \frac{\cos(4\pi/3)}{2}\sum_{k=1}^{+\infty}\frac{(-1)^k(2\pi\left( x- \frac{2}{3} \right))^{2k}}{(2k)!}\frac{2^{2k}}{2^{2k}-1}.\end{equation}

As $V_1(x)=V_2(1-x)$ a similar result can be derived for $V_1$ (which can be expressed in power series around $1/3$). We plot in Figure \ref{fig:my_label2}  the expression of the subaction via the  $1/2$ iterative procedure and via the power expansion described above.
In Figure \ref{fig:RR7} we plot the graph of $R$ we get via the  $1/2$ iterative procedure.

%$\hfill \diamondsuit$

\section{The case \(A(x)=\sin(2 \pi x)\)} \label{kk}
Now we consider the potential \(A(x)=\sin(2 \pi x)\) and that $T(x)= 2\,x$ (mod 1) acts on $[0,1]$. Consider also the inverse branches of $T$ given by $\tau_1(x)=\frac{x}{2}$ and $\tau_2(x)=\frac{x+1}{2}$. In page 23 in \cite{Conze} the authors conjectured that
in this case the maximizing probability has support on the periodic orbit of period $4$ given by
$\{1/15, 2/15, 4/15, 8/15\}.$ The graph for the subaction $V$ we obtain from the  $1/2$ iterative procedure for such $A$ is presented in Figure \ref{fig:my_label4}.  Although at first glance there seem to be $5$ functions $V_j$ in $[0,1]$ we point out that from the point of view of $S^1$ (periodic) there exists just $4$. The left one is just a continuation of the most right one. This is consistent with the supposition that the maximizing probability has support on a periodic orbit of period $4$. Note that  in the present case we do not  know  the value $m(A).$  In \cite{Conze} the authors conjectured  that $m(A)= \frac{ A(1/15) + A(2/15) +  A(4/15) + A(8/15) }{4}.$  It is possible to show that the conjecture is true.
\begin{figure}[h]
    \centering
    \includegraphics[height=3cm,scale=0.35]{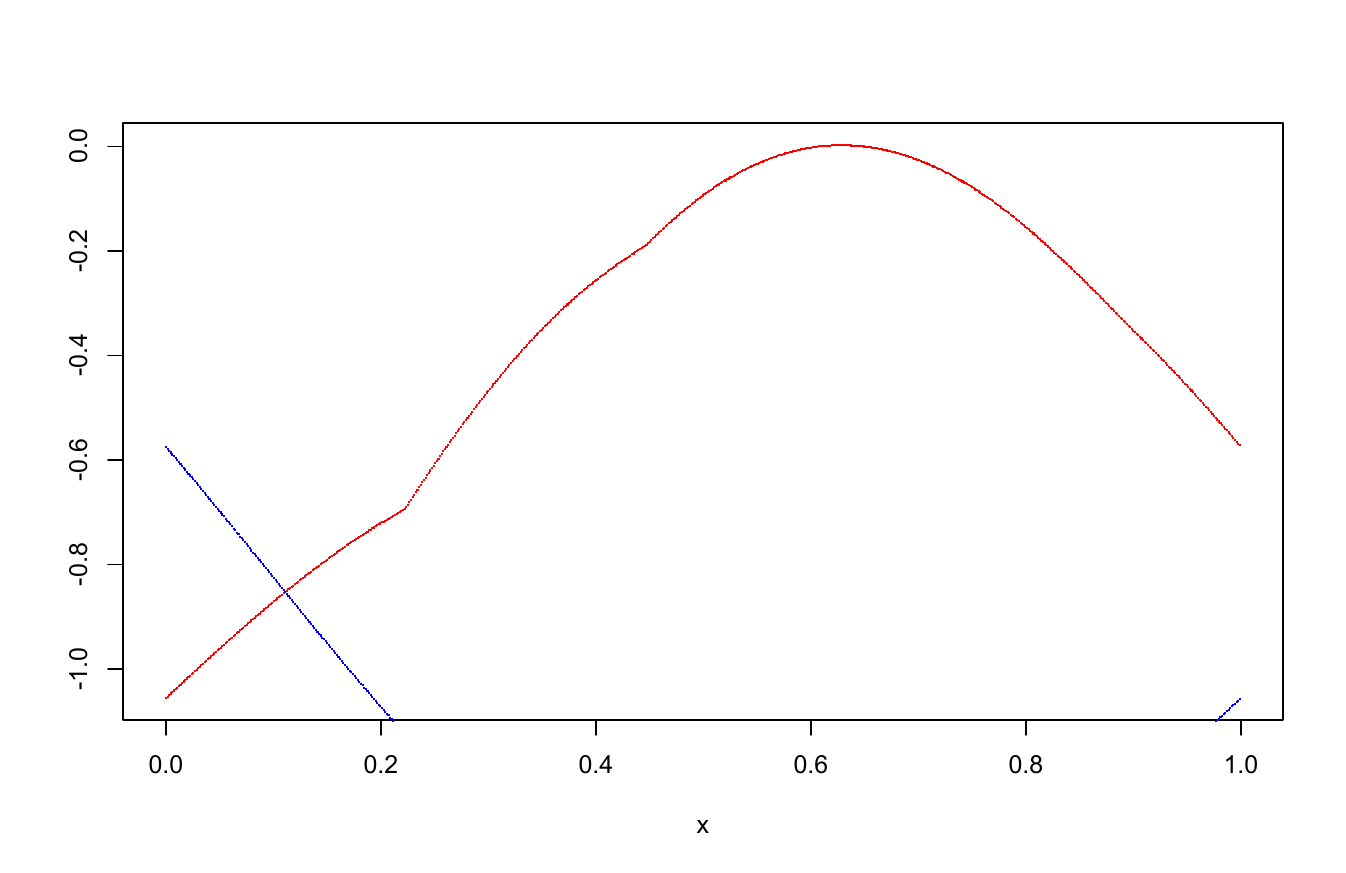}
    \caption{Case $\sin(2 \pi x)$ - In blue we show the graph of the subaction we get from the $ 1/2$ iterative procedure when the calibrated subaction equation is realized by the branch $\tau_2$. In red when it is realized by  the branch $\tau_1$. The graph of the approximation of the calibrated subaction  $V$ is the supremum of the blue and red graphs.}
\label{fig:my_label4}
\end{figure}
In order to do the computations, we consider the $[0,1]$ point of view.
From the graph we obtained via the computer it is natural to try to obtain $V$ via the expression
$V(x)= \sup\{V_1(x), V_2(x), V_3(x),V_4(x), V_5(x)\}$. Examining the Figure~\ref{fig:my_label4} we realize the following relations
$$V_5(x) +\hat{m} (A) = V_4(\tau_1(x))+A(\tau_1(x)),V_4(x)+ \hat{m}(A)=V_3(\tau_1(x))+A(\tau_1(x)),\,\,$$
$$V_3(x)+\hat{m}(A)=V_2(\tau_1(x))+A(\tau_1(x)),\,\,V_2(x)+\hat{m}(A)=V_1(\tau_1(x)+A(\tau_1(x)),$$
$$\,\,\,\text{and}\,\,\,V_1(x)+\hat{m}(A)=V_4(\tau_2(x))+A(\tau_2(x)).$$

The analysis of this case is similar to the previous one. We will just outline
the proof. In order to simplify the analytic expressions on this section (that depends on adding constants) we will write an expression like
$ V_j(x)= \lim_{n \rightarrow \infty}\sum_{i=0}^{n-1}  (F\circ \eta^i(x) - \,K) = \sum_{i=0}^{\infty}  (F\circ \eta^i(x) - \,K),$
$j=1,2,...,r$, on the form
\begin{equation} \label{falt}
 V_j(x)\cong  \sum_{i=0}^{\infty}  F\circ \eta^i(x) .
\end{equation}

\begin{figure}[h]
    \centering
    \includegraphics[height=3cm,scale=0.35] {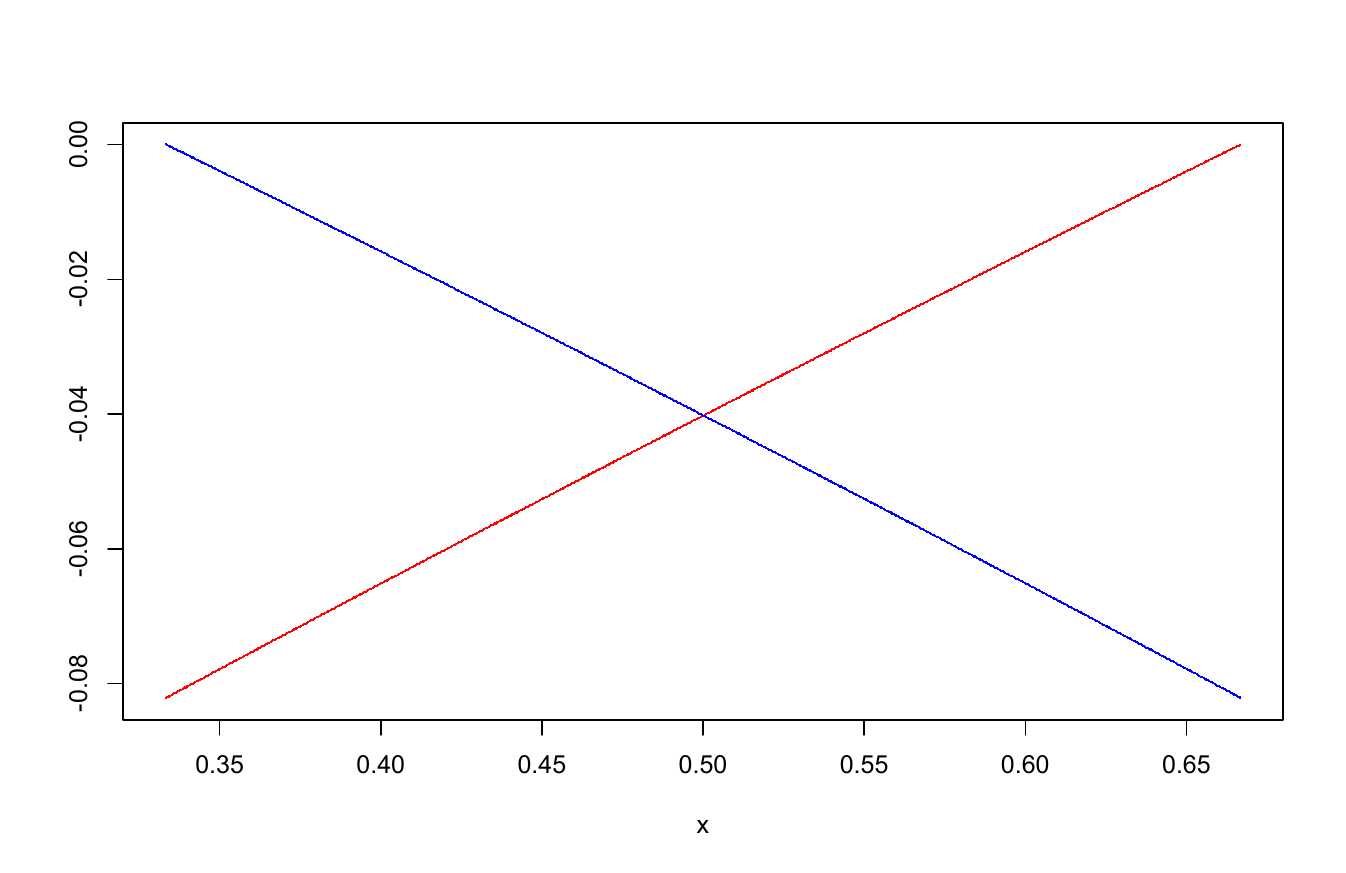}
    \caption{Graphs for the first example of approximating the spectral  spectral radius. The graph in red describes the values where the  $1/2$ iterative procedure detects that the maximization (in right side of the subaction equation) was obtained via the branch $\tau_1$. The function with this graph is denoted by  $V_1$.  The graph in  blue describes the cases where the mazimization was obtained via the branch $\tau_1$. The function $V$ which is the maximum of $V_1$ and $V_2$  is the calibrated subaction one gets from the  $1/2$ iterative procedure. There is a perfect match of such $V$ with the expression (\ref{expr_ex5}).}
    \label{fig:matriz_algoritmo_sob}
\end{figure}
If one gets the explicit expression for $V_1$ it will follow from the system above that we can also get the  explicit expressions for
$V_2,V_3,V_4$. We will show later that
\begin{equation} \label{sisi}
V_1(x)=\sum_{m=0}^{+\infty} \sum_{j=0}^{3}\left[ \sin \left(\frac{\pi}{2^{(j+4m)}} \left(\frac{2^{4(m+1)}-1}{2^4-1}+x \right )\right) - \sin\left(2 \pi \frac{2^m}{15}\right) \right].
\end{equation}
Assuming that  the above relations among the $V_j$ are true we get
\begin{equation*}V_1(x)-V\circ \tau_1^3 \circ \tau_2 (x) =A\circ \tau_1^3 \circ \tau_2 (x)+ A\circ \tau_1^2 \circ \tau_2 (x) + A \circ \tau_1 \circ \tau_2 (x)+A\circ \tau_2(x) - 4 \hat{m}(A).\end{equation*}
Now, we take \(\eta(x)=\tau_1^3\circ \tau_2(x)\), and $F(x)=A\circ \tau_1^3 \circ \tau_2 (x)+ A\circ \tau_1^2 \circ \tau_2 (x) + A \circ \tau_1 \circ \tau_2 (x)+A\circ \tau_2(x),$ with \(K=4\, \hat{m}(A)\). Then, we get $\eta(x)=\frac{x}{2^4}+\frac{1}{2^4}$.

Note that if $x \in [0,1]$, then
$\displaystyle\lim_{n \rightarrow + \infty} \eta^n(x)=\frac{1}{15}$. In this way we get numerical evidence that
$\displaystyle\hat{m}(A)=\lim_{n\rightarrow +\infty}\frac{F(\eta^n(x))}{4}=
\frac{F(1/15)}{4}\approx 0.4841$. This is consistent with the value
$m(A)=\frac{A(1/15)+A(2/15)+A(4/15)+A(8/15)}{4}\approx 0.4841$. Using the truncated expression we get
$V_1^{n*}(x)=\sum_{i=0}^{n-1}F(\eta^i(x)),\,\,\,\text{and}$
$V_2^{n*}(x)=V_1(\tau_1(x))+A(\tau_1(x)) -m(A)$. Applying the above reasoning in a recursive way we obtain an expression for  $V_1(x)\cong$
\begin{equation}
   \sum_{i=0}^{+\infty}A(\frac{\eta^i(x)+1}{2^4})+A(\frac{\eta^i(x)+1}{2^3})+A(\frac{\eta^i(x)+1}{2^2})+A(\frac{\eta^i(x)+1}{2}).
\end{equation}
Therefore,
\begin{equation}  \label{BB1}
    V_1(x)\cong\sum_{i=0}^{+\infty}\sin(\pi ((x+1)/2^{i})).
\end{equation}
The function $V_2$ can be obtained from $V_1$. The function $V_3$ from $V_2$ and so on.
One can show that
$V(x)= \sup\{V_1(x), V_2(x), V_3(x),V_4(x), V_5(x)\}$
is a calibrated subaction for $A$ and that $\tilde{m}(A)= m(A).$

\section{Estimation of the joint spectral radius: two examples and a more general analytic expression} \label{jsp}

In the class of  examples we consider here does not exists  a map acting on $[0, 1]$ but it is naturally  defined two inverse branches (an iterated function system). Anyway, the $1/2$ iterative procedure will produce useful information.

Consider
\[A_1=\begin{pmatrix}
a_1 & b_1\\
c_1 &d_1
\end{pmatrix} \quad \quad A_2=\begin{pmatrix}
a_2 & b_2\\
c_2&d_2
\end{pmatrix},\]
with
\[\tau_1(x)=\frac{(a_1-b_1)x+b_1}{(a_1+c_1-d_1-b_1)x +b_1+d_1}\]
and
\[\tau_2(x)=\frac{(a_2-b_2)x+b_2}{(a_2+c_2-d_2-b_2)x +b_2+d_2}.\]

Take $I_1=\tau_1([0,1])$, $I_2=\tau_2([0,1])$ and define the potential

\[A(x)=\left\{\begin{matrix}
1/2 \,\,(\log|(\tau_1^{-1})'(x)|+\log(\det(A_1))\,),\quad x\in I_1,\\
1/2 \,\,(\,\log|(\tau_1^{-1})'(x)|+\log(\det(A_2))\,),\quad x\in I_2.
\end{matrix}\right.\]
In  \cite{JP} the authors explain how the joint spectral radius can be analyzed from the point of view of Ergodic Optimization.
The special space of  ``invariant probabilities'' to be considered on this case is described on Definition 7 of  \cite{JP}. It follows from results on  \cite{JP} (see expression (42)) that the value $e^{m(A)}$ (\,$m(A)$ is obtained in a similar way as in classical Ergodic Optimization\,) is equal to the joint spectral radius $\rho(A_1,A_2)$ (under some conditions for $A_1,A_2$). In this section the main issue is to estimate $m(A)$. In the first and second examples below the subaction is rigorously obtained. We will estimate in our first example the value $m(A)$ using the  $1/2$ iterative procedure.
\begin{figure}[h]
    \centering
    \includegraphics[height=3cm,scale=0.35]{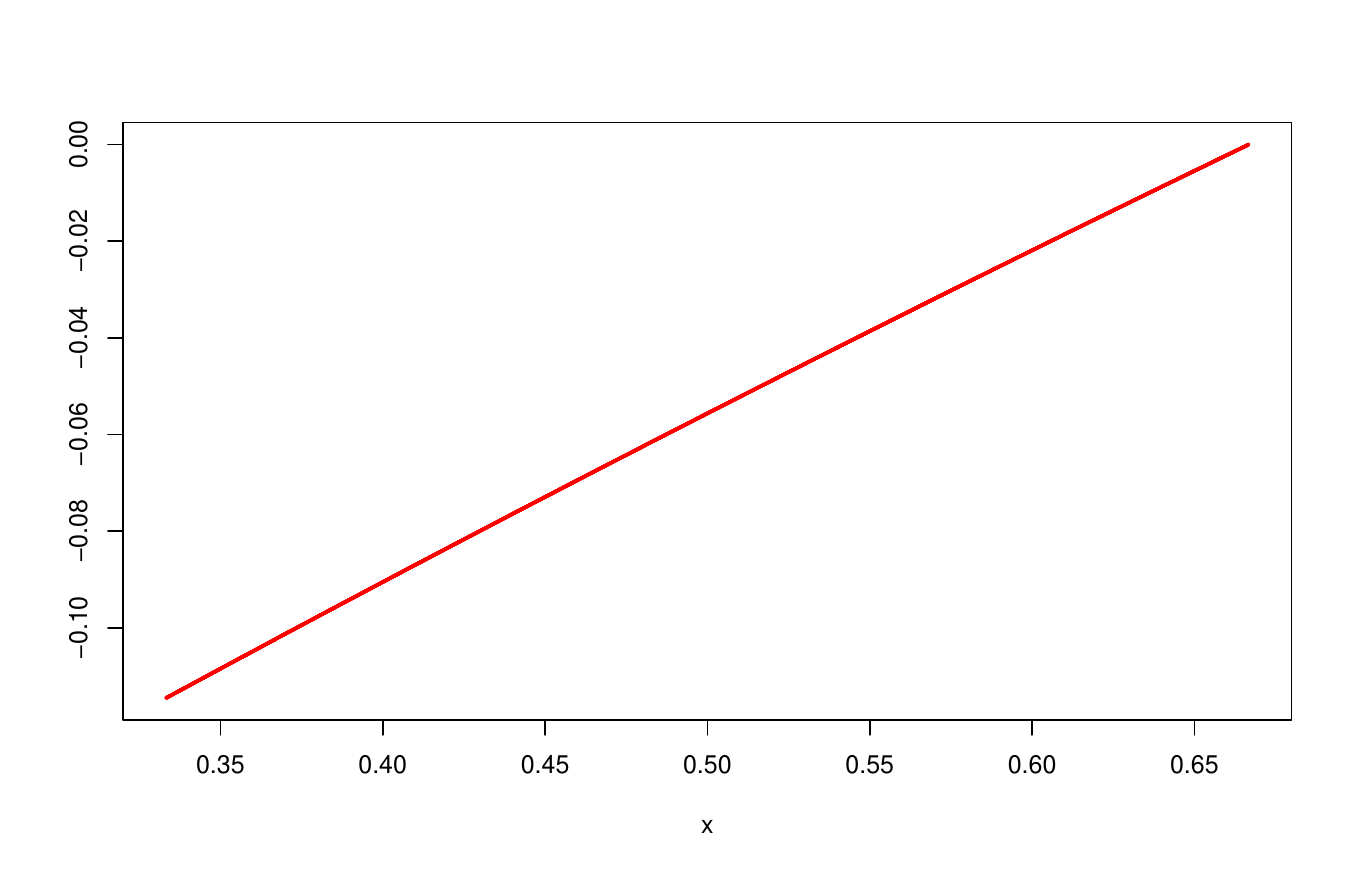}
    \caption{Second example in the case of the estimation of the joint spectral radius. In this case from the  $1/2$ iterative procedure we get a picture that indicates that  the realizer is always $\tau_1$ (red color).}
    \label{fig:matriz_exemplo6}
\end{figure}

We consider the first example. Take
\[A_1=\begin{pmatrix}
2 & 1\\
2 &2
\end{pmatrix} \quad \text{ and }\quad A_2=\begin{pmatrix}
2 & 2\\
1&2
\end{pmatrix}.\]

In this case the inverse branches are $\tau_1(x)=\frac{x+1}{x+3}$ e $\tau_2(x)=\frac{2}{4-x}$.

The potential is given by

\[A(x)=\left\{\begin{matrix}
\,\,1/2 \,(\,\log(|\frac{2}{(x-1)^2}|)+\log(2)\,),\quad 1/3\leq x\leq 1/2,\\
1/2 \,\, (\log(|\frac{2}{x^2}|)+\log(2)\,),\quad 1/2\leq x\leq 2/3.\\
\end{matrix}\right.\]
Applying a high order iteration of  $1/2$ iterative procedure $\mathcal{G}^n (f_0)$ we get an output called ``subaction'' which helps to find the value $m(A).$ This is in agreement  to what was predicted by the theory in \cite{JP}. Corollaries 13 and 14 of  \cite{JP} describe  the values of the joint spectral radius $\rho(A_1,t\,A_2)$, for some values of $t>0$. The value  $m(A)\approx 1.2702$  does not correspond to the spectral radius of  either $A_1$ or $A_2$  (they are equal). This is in agreement  to what was predicted by the theory in \cite{JP}. Looking Figure \ref{fig:matriz_algoritmo_sob} which was  obtained  from the  $1/2$ iterative procedure (showing  the possible realizers) we assume that we should take $V_1,V_2$ (with realizers, respectively,
$\tau_1$ and $\tau_2$) satisfying

$V_2(x)+\hat{m}(A)=V_1(\tau_1(x))+A(\tau_1(x)) ,\,\,V_1(x)+\hat{m}(A)=V_2(\tau_2(x))+A(\tau_2(x)).$

Finally, we get
 \begin{equation} \label{eq:log1}\, V_2(x)-V_2\circ \tau_2 \circ \tau_1(x)= A\circ \tau_2 \circ \tau_1(x)+A\circ \tau_1(x) -2\hat{m}(A).
 \end{equation}
Figure~\ref{fig:matriz_algoritmo_sob} shows the pictures of the graphs of the functions $V_1$ and $V_2$. As $q=\frac{1}{2}(\sqrt{17}-3)$ is the fixed point of  $\tau_2 \circ \tau_1$ we obtain
$\hat{m}(A)=\frac{A\circ \tau_2 \circ \tau_1(q)+A\circ \tau_1(q)}{2}$
$= \frac{ A(\frac{1}{2}(\sqrt{17}-3))+A(\frac{1}{2}(5- \sqrt{17}))}{2}$
$=\frac{1}{4}\left( 2\log(2)+\log(2/(q-1)^2)+\log(2/(q^2)) \right)\approx 1.2702$.

\begin{figure}[h]
    \centering
    \includegraphics[height=5cm,scale=0.45]{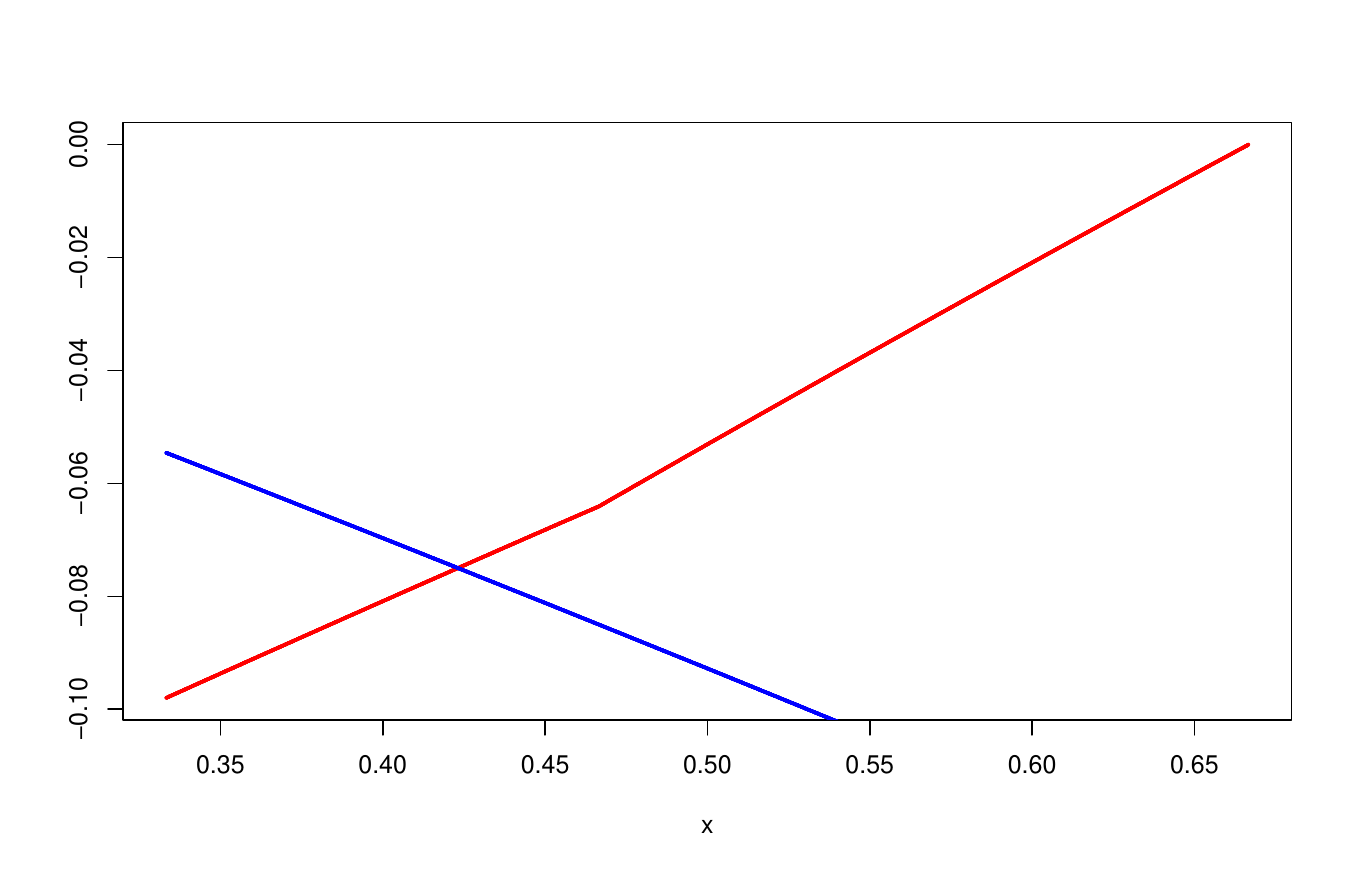}
    \caption{Joint spectral radius case -The graph in blue indicates where the realizer  is attained by $\tau_2$ and in red by  $\tau_1$. Here   the value of the parameter is  equal to $t=0.92$.}
    \label{fig:matriz_exemplo7_t2}
\end{figure}

This is in agreement with the value we get from the  $1/2$ iterative procedure. Therefore,  the  $1/2$ iterative procedure is able to estimate  the joint spectral radius $\rho(A_1,A_2).$ After some computations we will show later that $m(A)$ satisfies $m(A)=\log\left( \frac{1}{2}(3+\sqrt{17})\right)$, and taking $b=\frac{1}{2} (3 + \sqrt{17})$
we will finally get that
$V(x)=\max \{\log(x+b),\log(1-x+b) \}$
is a subaction. Now we will begin the computations for this case. Taking  $F(x)=A\circ \tau_2 \circ \tau_1(x)+A\circ \tau_1(x)$ and  $\eta(x)=\tau_2 \circ \tau_1(x)$, we get $\displaystyle V_2(x)\cong \lim_{n \rightarrow +\infty} \sum_{i=0}^{n} F \circ \eta^i(x).$ This means
\begin{equation} \label{quec} V_2(x)\cong2\log\left(\prod_{i=0}^{+\infty}(11+3 \eta^i(x))\right).
\end{equation}
We note that from equation  (\ref{eq:log1}) we get
$V_1(x)=V_2(1-x).$ One can also show  that in this case  the piecewise analytic expression for the calibrated subaction $V$ can given by $V(x)=\max $ of
\begin{equation}\label{expr_ex5}
\left\{ \log\prod_{i=0}^{\infty}\left(\frac{11+3 (\tau_2\circ \tau_1)^i(x)}{11+(\frac{3}{2}\left(\sqrt{17}-3\right))} \right),\log\prod_{i=0}^{\infty} \left(\frac{11+3 (\tau_2\circ \tau_1)^i(1-x)}{11+(\frac{3}{2}\left(\sqrt{17}-3\right))} \right)  \right\}
\end{equation}
There is a quite strong simplification of all this. Indeed, we get that in this case the subaction $V$ satisfies
$V(x)=\max\{V_1(x),V_2(x)\},$
where $V_2(x)=\log\left(h(x)\right)$ for some function $h$. From the information we get from the  $1/2$ iterative procedure it seems that $h$ is linear. Assuming that  $V_2(x)=\log(x+b)$ we get the system
$\log\left(\frac{(b+x)(11+3x)}{b(11+3x)+6+2x}\right)= \log \left( (11+3x)e^{-2m(A)} \right)$.
This means $e^{-2m(A)}=\frac{b+x}{6+11b+2x+3bx}.$
As $m(A)$ satisfies $m(A)=\log\left( \frac{1}{2}(3+\sqrt{17})\right)$,
taking derivative on $x$ and using the condition to be equal to  zero we get $6+11b+2x+3bx-(2+3b)(b+x)=0,$ that is $6+9b-3b^2=0.$
Finally, we get $b=\frac{1}{2} (3 + \sqrt{17}).$
Note that $b=e^{m(A)}$, therefore we get the candidate for subaction
$V(x)=\max \{\log(x+b),\log(1-x+b) \}=  \max\{V_2(x),V_1(x)\}.$
In order to check that this $V$ is indeed the solution we plug the above expression for $V_2$  in equation \eqref{eq:log1} and we have a confirmation that such $V$ is  a subaction.
\smallskip

We will consider now our second  example. Denote
\[A_1=\begin{pmatrix}
2 & 1\\
 2& 2
\end{pmatrix} \quad \text{and}  \quad A_2= \begin{pmatrix}
1& 1\\
 1/2& 1
\end{pmatrix}.\]

In this case $\tau_1(x)=\frac{x+1}{x+3}$ and $\tau_2(x)=\frac{2}{4-x}$, and

\[A(x)=\left\{\begin{matrix}
(1/2)(\log(|\frac{2}{(x-1)^2}|)+\log(2)),\quad 1/3\leq x\leq 1/2\\
(1/2)(\log(|\frac{2}{x^2}|)-\log(2)),\quad 1/2\leq x\leq 2/3\\
\end{matrix}\right.\]
From Corollaries 13 and 14 of  \cite{JP} it follows that the joint spectral radius $\rho(A_1,A_2)$ is equal to the spectral radius $\rho(A_1)$ of $A_1$ which is $ 2 + \sqrt{2}$. This corresponds to $m(A)= \log( 2 + \sqrt{2})$. Via the  $1/2$ iterative procedure we obtain the value $m(A )\approx 1.2279 \sim \log ( 2 + \sqrt{2})$ after 14 iterations of $\mathcal{G}$ applied to $f_0=0$.   This is in agreement with the analytical result presented in \cite{JP}. Looking Figure \ref{fig:matriz_exemplo6} (which was  obtained  from the  $1/2$ iterative procedure)
and proceeding in the same way as before we get
$V(x)-V\circ \tau_1(x)=A\circ \tau_1 (x) -m(A).$
As $q= \sqrt{2}-1 $ is a fixed point of  $\tau_1$ we finally get
$m(A)=A(\tau_1(q))= \log\left( \frac{2}{2- \sqrt{2}} \right)=\log(2+\sqrt{2}) \approx 1.22795.$
Proceeding in the same way as in  the previous example we get

$\,\,\,\,\,\,V(x) \cong\sum_{i=1}^{+\infty} -\log \left( 1-\tau_1^i(x) \right)= (-1)\log \prod_{i=1}^{+ \infty} \left( 1- \tau_1^i(x) \right).$

In the cases where we get explicit estimations, the approximation of the exact value $m(A)$ to  four decimals places, required $30$ iterations. With $15$ iterations we get an approximation to  two decimal places.

Now, we consider a more general case. Given $t>0$, denote
\[A_1=  \begin{pmatrix}
2 & 1\\
2 &2
\end{pmatrix} \quad \text{ and }\quad tA_2=\begin{pmatrix}
2\,t & 2\,t\\
1\, t&2\, t
\end{pmatrix}.\]

In this case $\tau_1(x)=\frac{x+1}{x+3}$ and $\tau_2(x)=\frac{2}{4-x}$.
As  $t>0$, then
\[A(x,t)=\left\{\begin{matrix}
(1/2)(\log(|\frac{2}{(x-1)^2}|)+\log(2)),\quad 1/3\leq x\leq 1/2,\\
(1/2)(\log(|\frac{2}{x^2}|)+\log(2t^2)),\quad 1/2\leq x\leq 2/3.\\
\end{matrix}\right.\]
We know from the other cases we already consider that for the values $t=1/2$ and $t=1$ we get different maximal values $ m(A)$ and different subactions. Denote by $m(A,t)$ the function which gives the maximal value  of $A(x,t)$ (where $e^{m(A,t)} $ is the joint spectral radius $\rho(A_1,\,t\, A_2)$\,), for each $ t>0$. We are not able to obtain in a rigorous manner the subaction for all cases of $t>0$. However, we are able to show rigorously  that there is an interval $ 0\leq t\leq  \frac{4(4+3\sqrt{2})}{18+13\sqrt{2}} $ where the maximal value is constant  (see computations of case 1 below). Via the  $1/2$ iterative procedure we will be able to plot (a non rigorous estimation)  the maximal value as a function of $t$ (see figures \ref{fig:matriz_exemplo7_comparacao} and  \ref{fig:mylabel3}).
\begin{figure}[h]
    \centering
    \includegraphics[height=3cm,scale=0.35]{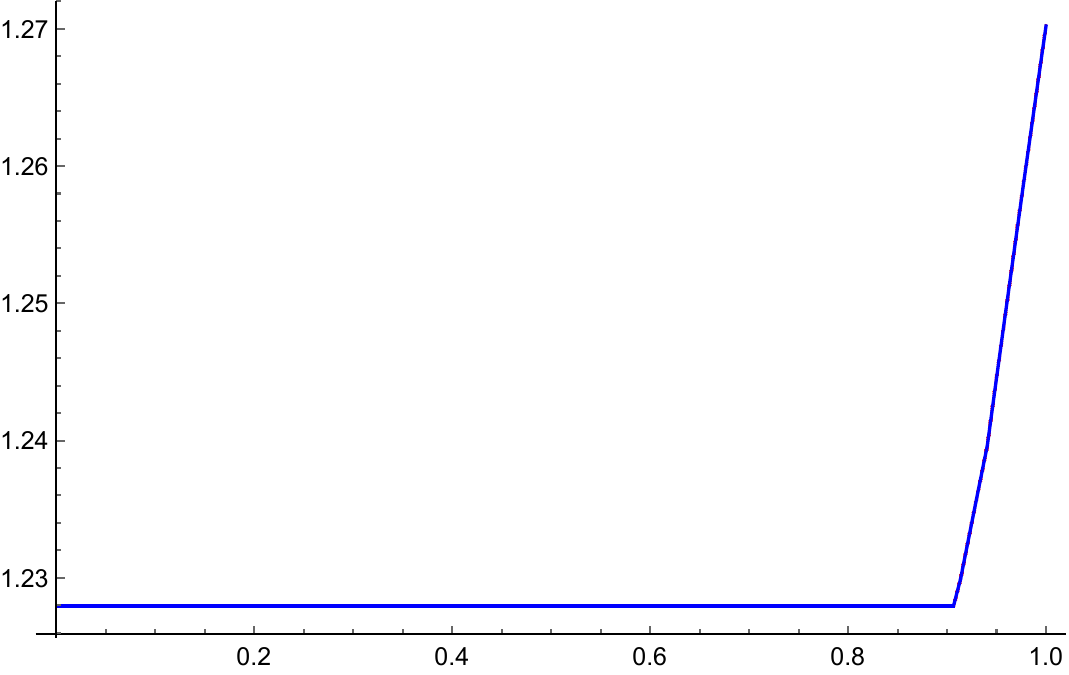}
    \caption{Joint spectral radius case - In blue we plot the graph of $m(A,t)$ as a function of $t$ via  expressions we get explicitly here and in blue the estimation of $m(A,t)$ via the  $1/2$ iterative procedure. There is a perfect match in some intervals in $[0,1]$.}
    \label{fig:matriz_exemplo7_comparacao}
\end{figure}
The main idea here is to try to take one of the $V_i$ in the form $V_i(x) = \log(x+b)$
(or, $\log (b-x)$). For guessing the number of $V_i$, $i=1,2...,r$, we use (in most of the cases) the picture we get from the  $1/2$ iterative procedure.\\
{\bf Case 1:} It will follow that $ 0\leq t\leq  \frac{4(4+3\sqrt{2})}{18+13\sqrt{2}}$.
We will  get here explicitly  that $m(A,t)= \log(2+\sqrt{2})$, when
$ 0\leq t\leq  \frac{4(4+3\sqrt{2})}{18+13\sqrt{2}} $. We will elaborate on that. For small values  $t\sim 0$, the value $m(A,t)$ we get on the computer indicates that $m(A,t)=\log(2+\sqrt{2})$. Moreover, it suggests that in order to get the calibrated subaction $V$ we should  work with two $V_i$:
\begin{equation} \label{logeqt1}V_1(x,t)+\hat{m}(A,t)=A(\tau_2(x),t)+V_2(\tau_2(x),t), \end{equation}
\begin{equation}\label{logeqt2} V_2(x,t)+\hat{m}(A,t)=A(\tau_1(x),t)+V_2(\tau_1(x),t). \end{equation}
$V(x,t)=\max\{V_1(x,t),V_2(x,t) \}$ is the candidate to be the subaction for  $A(x,t)$.  As $m(A,t)$ seems to be constant in an interval  and $A(\tau_1(x),t)=\log(\frac{2}{1-\tau_1(x)})$ we conclude that $V_2$ should not depend on $t$. We assume  $V_2(x,t)=\log(x+b)$ and  then from last equation we get $b= 1+\sqrt{2}$ and finally
$V_2(x,t)=\log(x+1+\sqrt{2}).$ It is easy to confirm that $V_2(x,t)+\log(2+\sqrt{2})=V_2(\tau_1(x),t)+A(\tau_1(x),t)$. Making a substitution in (\ref{logeqt1}) we get $V_1(x,t)= \log\left(t(2+\sqrt{2}-\frac{x}{\sqrt{2}})\right)  $. Clearly $\hat{m}(A,t)=\log(2+\sqrt{2})$ is a natural  candidate to be $m(A,t)$.
We may ask which values of $t$ the above expressions for $V_1$ and $V_2$ are such that the subaction $V$ is given by
\begin{equation} \label{iur} V(x,t)=\max[V_1(x,t),V_2(x,t)]?
\end{equation}
In particular we get
$A(\tau_1(x),t)+V_2(\tau_1(x),t)=V_2(x,t)+\hat{m}(A,t),\,\,$ and
$A(\tau_2(x),t)+V_2(\tau_2(x))=V_1(x,t)+\hat{m}(A,t).$

Given $x \in [1/3,2/3]$ and $i \in \{1,2 \}$, then for some $j \in \{1,2 \}$, we get
\[A(\tau_i(x),t)+V_1(\tau_i(x))\leq V_j(x,t)+\hat{m}(A,t). \]
That is, $\max \left \{\log \left( t (3+x) \left(2 +
\frac{1}{\sqrt{2}}+ \frac{\sqrt{2}}{3+x} \right) \right), \log \left( t^2 (4-x) \left( 2 +\sqrt{2}-\frac{\sqrt{2}}{4-x} \right) \right)  \right \} \leq $

$\leq \max \left \{\log\left(t(2+\sqrt{2}-
\frac{x}{\sqrt{2}})(2+\sqrt{2})\right),\log\left((1+\sqrt{2}+x))(2+\sqrt{2})\right) \right \}.$

On the other hand if $x \in [1/3, \frac{\sqrt{2}}{2+\sqrt{2}} ]$ then

$\,\,\,\,\,\,\,\,\, t(3+x) \left(2 + \frac{1}{\sqrt{2}}+ \frac{\sqrt{2}}{3+x} \right) \leq  \left(t(2+\sqrt{2}-\frac{x}{\sqrt{2}})(2+\sqrt{2})\right).$

Therefore, in this interval  \(A(\tau_1(x),t) +V_1(\tau_1(x),t ) \leq V(x,t)+\hat{m}(A,t) \).
Now, consider $x \in [x(t),2/3]$, where $x(t)$ is the point such that

$\,\,\,\,\,\,\,\,\,t (3+x(t)) \left(2 + \frac{1}{\sqrt{2}}+ \frac{\sqrt{2}}{3+x(t)} \right)=\left((1+\sqrt{2}+x(t)))(2+\sqrt{2})\right) .$

This means that if $x \in [x(t),2/3]$, then,  \(A(\tau_1(x),t) +V_1(\tau_1(x),t ) \leq V(x,t)+\hat{m}(A,t)\). From this follows that $x(t)\leq \frac{\sqrt{2}}{2+\sqrt{2}}= 0.414214... $, Then,  for  $x \in [1/3,2/3]$ we get
$A(\tau_1(x),t) +V_1(\tau_1(x),t ) \leq V(x,t)+\hat{m}(A,t)$. This condition is satisfied for $t \leq \frac{4(4+3\sqrt{2})}{18+13\sqrt{2}} \approx 0.9061$. It is compatible with the information we get from the  $1/2$ iterative procedure. Now we will show that  \(A(\tau_2(x),t) +V_1(\tau_2(x),t ) \leq V(x,t)+\hat{m}(A,t) \) for such values of $t$. Note that if $ 0\leq t \leq \frac{(2+\sqrt{2})^2}{8+3\sqrt{2}}\approx 0.952 $, then,
$t^2 (4-x) \left( 2 +\sqrt{2}-\frac{\sqrt{2}}{4-x} \right) \leq t(2+\sqrt{2}-\frac{x}{\sqrt{2}})(2+\sqrt{2}).$
Therefore, $V(x,t)$ given by equation (\ref{iur}) is a calibrated subaction with  $m(A,t)= \log(2+\sqrt{2})$, as far as, $ 0\leq t\leq  \frac{4(4+3\sqrt{2})}{18+13\sqrt{2}} $.
The final conclusions is that $m(A,t)=\log(2+\sqrt{2})$ for  $t \in [0,t_1]$, where  $ t_1:= \frac{4(4+3\sqrt{2})}{18+13\sqrt{2}} \approx 0.9061 $. In Figure~\ref{fig:mylabel3} we show a detailed estimation of the graph of $m(A,t)$ (via the  $1/2$ iterative procedure)  for $t$ close to $t_1$.
\begin{figure}[h]
    \centering
    \includegraphics[height=4cm,scale=0.6]{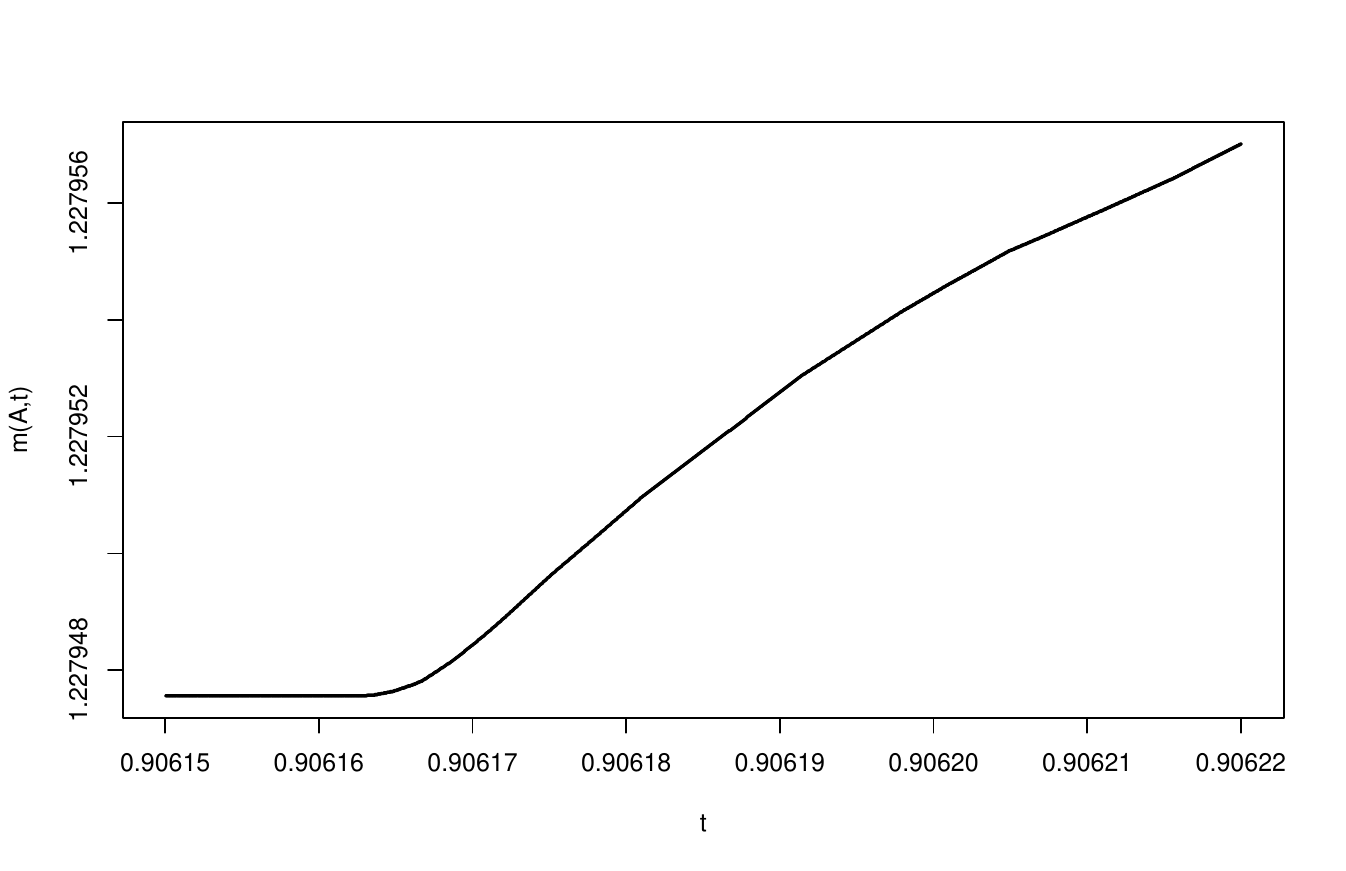}
    \caption{Graph of $ m(A,t)$ for $t$  around the point $t_1$ where  $m(A,t)$ is not constant anymore.}
    \label{fig:mylabel3}
\end{figure}

{\bf Case 2:} Now we analyze parameters close to $ t=0.91$. In this case the picture we get  was not good enough for a guess. But, the  approximated value $\hat{m}(A)\approx 1.228902$ suggests an orbit of period  $4$ as the maximizing probability. In this way is natural to try to obtain $V$ using the system:
$V_4(x,t)+\hat{m}(A,t)=V_3(\tau_1(x),t) + A (\tau_1(x),t),\,\,
V_3(x,t)+\hat{m}(A,t)=V_2(\tau_1(x),t) + A (\tau_1(x),t),$
$V_2(x,t)+\hat{m}(A,t)=V_1(\tau_1(x),t) + A (\tau_1(x),t),
V_1(x,t)+\hat{m}(A,t)=V_4(\tau_2(x),t) + A (\tau_2(x),t).$
We will try to get $V$ via
\begin{equation} \label{eli}
V(x,t)=\max[V_1(x,t),V_2(x,t),V_3(x,t),V_4(x,t)].
\end{equation}
This system (if accomplish its mission of getting $V$) gives the exact value
\begin{equation} \label{elis}
m(A,t)=\frac{1}{4}\log\left((75+\sqrt{5609})t\right).\end{equation}

This value we get  from the  $1/2$ iterative procedure -  for the estimation of $m(A,t)$ - when $t=0.91$ is quite close to the one we get from the above analytical expression for this value.

After a tedious computation we get:

$V_1(x,t)=\log\left(b-x\right)$,
$V_2(x,t)= \log (d(t)(-1 - x + b (3 + x))$,
$V_3(x,t)= $ $\log \left(2 d(t)^2 (-2 - x + b (5 + 2 x)) \right)$,
$V_4(x,t)= \log (2 d(t)^3 (-7 - 3 x + b (17 + 7 x))),$
where, $b=\frac{1}{34}\left( 89 +\sqrt{5609} \right)$. We checked that  $V(x,t)$ is indeed the subaction when  $t \in [t_2, t_3]$, where approximately $[t_2,t_3]= [0.908571, \,0.912996]$.
More precisely, one can get
\[t_2:=\frac{367765714335 - 4904055941 \sqrt{5609}}{533794816},\]
\[\,\,\,\,\, \text{ and } t_3:=\frac{1900479599391 + 25366638853\sqrt{5609}}{4162416040000}\]
The value $m(A,t)$ is given by  (\ref{elis}).
\begin{figure}[h]
    \centering
    \includegraphics[height=3cm,scale=0.55]{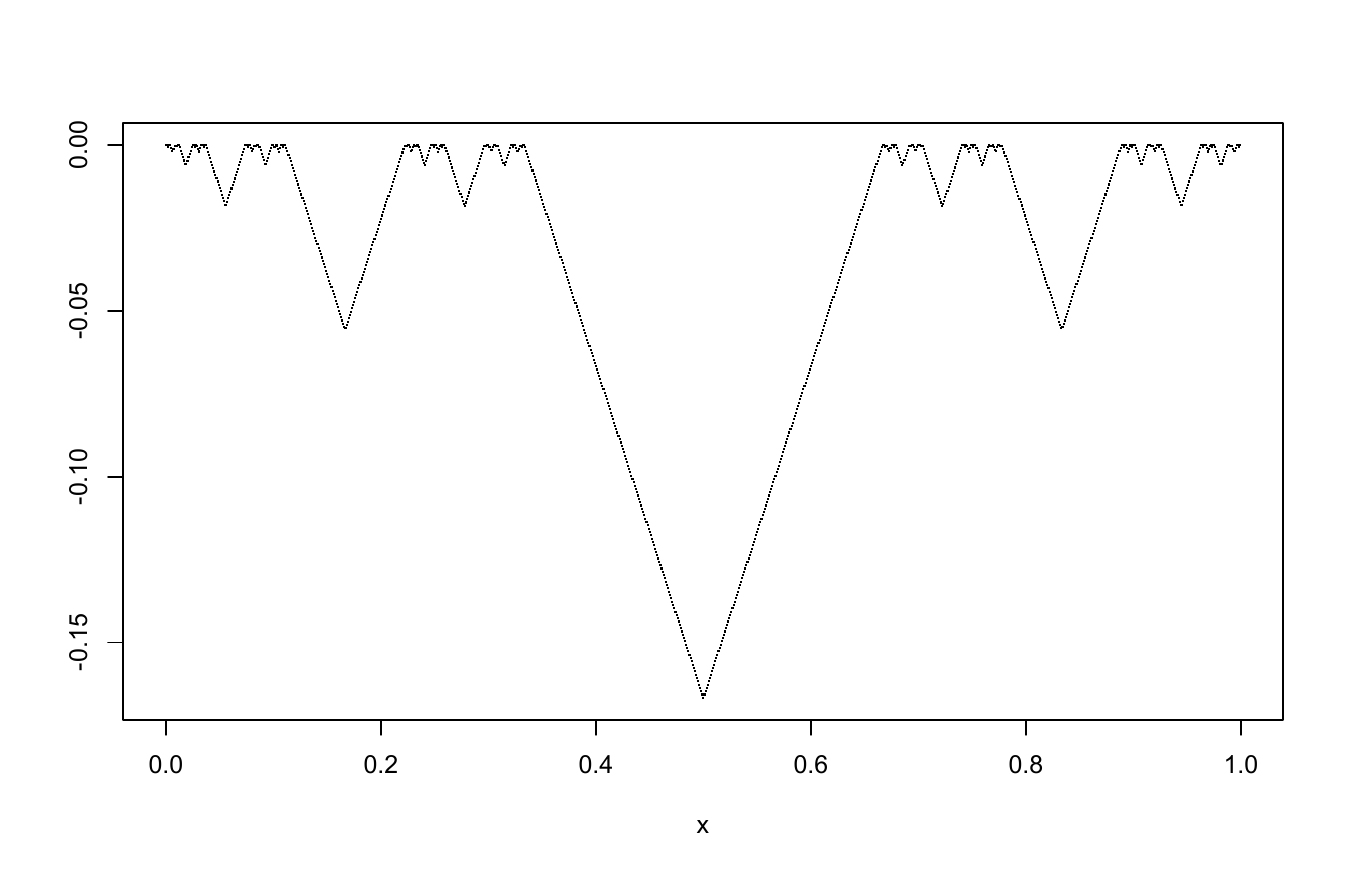}
    \caption{\textit{Graph of the truncation $A_{100}(x)$ in a discretization of $10^5$ points}}
    \label{fig:my_label4}
\end{figure}
\begin{figure}[h]
    \centering
    \includegraphics[height=3cm,scale=0.55]{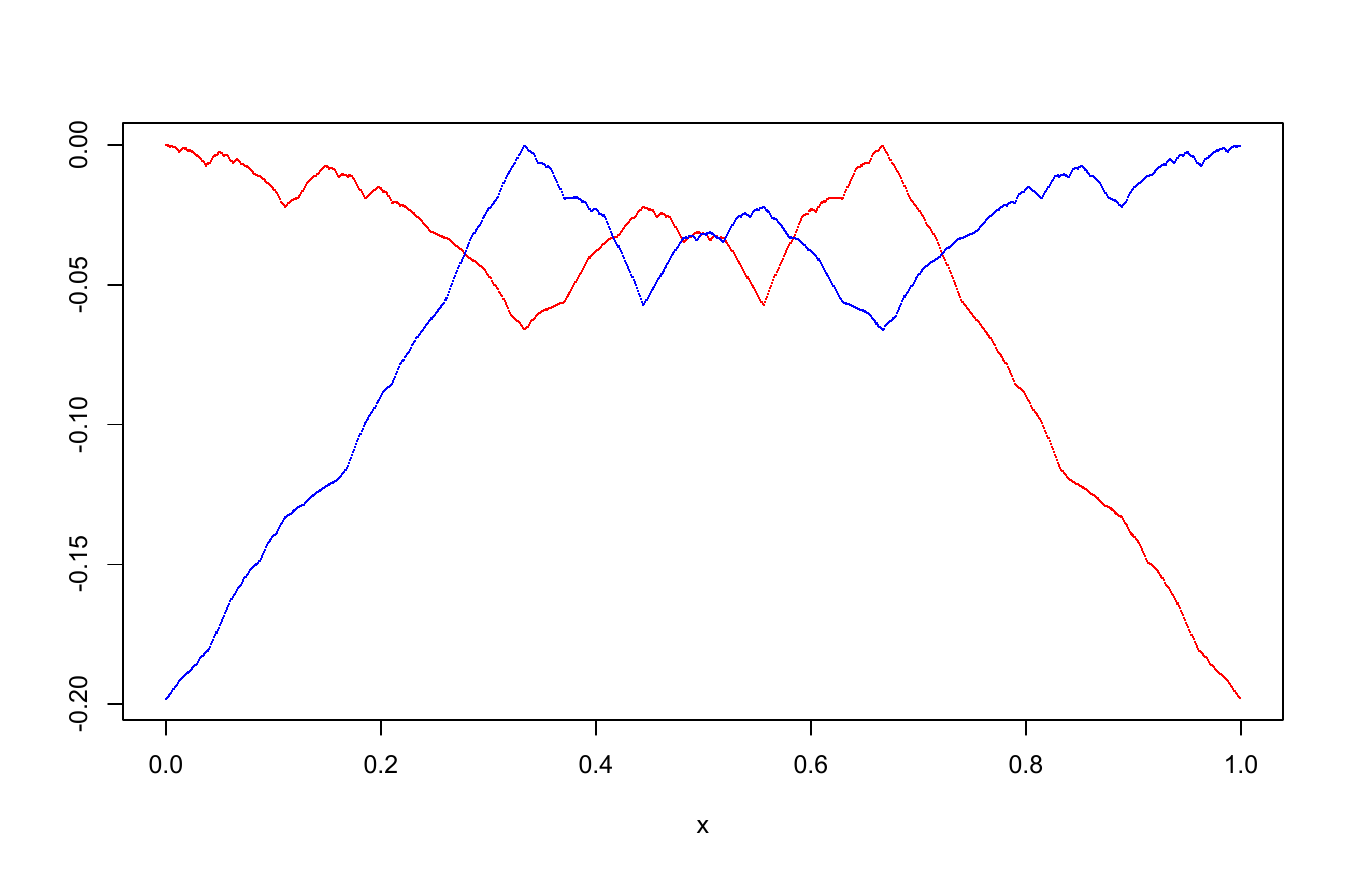}
    \caption{Picture obtained using the  $1/2$ iterative procedure for  $A_{100}(x)$ with a discretization of $10^5 $ points. The red graph shows when the realizer is obtained via $\tau_1$ and the one in blue is for the case when  the realizer is $\tau_2$.}
    \label{fig:my_label3}
\end{figure}
We point out that the above kind of reasoning is quite general; there are many similar examples: one can take another value of $t$, then, from the graph one gets from the  $1/2$ iterative procedure to guess the right number of $V_i$, etc.

\section{Revisiting the case   $A(x)=-(x-\frac{1}{3})^2$} \label{revis}

We can proceed in the same way as in the last examples by choosing a function $F$
and getting the power series for the case $A(x)=-(x-\frac{1}{3})^2$. We will get in the end the same result as in section \ref{pri}. We just outline the reasoning.
Taking
$F(x)=\frac{-21}{64}\left(x+1/9 \right)^2 +4/189,$
and
$\eta(x)=\tau_1 \circ \tau_1 \circ \tau_2(x),$
we will get

$\,\,\,\lim_{n \rightarrow + \infty}V_1^{n*}(x)=\frac{-21}{64}\sum_{i=0}^{+\infty} \left(\left( \eta^i(x)+1/9 \right)^2 -256/3969  \right).$

One can show that $\displaystyle \eta^i(x+1/7)=\frac{1}{7} +\frac{x}{8^i}.$
Therefore,
\begin{equation}\label{eq:serie_ex_compa}
V_1 (x+ 1/7)=\lim_{n \rightarrow + \infty}V_1^{n*}(x+1/7)= \frac{-21}{64}\sum_{i=0}^{+\infty}\left( \left(\frac{16}{63}+\frac{x}{8^i} \right)^2-\frac{256}{3969}\right).
\end{equation}
After simplification and canceling terms we get
$V_1(x)=-\frac{x^2}{3}-\frac{2x}{21}+1/49,$ which shows the same form (up to an additive constant) of the $V_1$ we obtained before on section \ref{pri}.
\begin{figure}[h]
  \centering
  \begin{minipage}[b]{0.4\textwidth}
    \includegraphics[height=3cm,width=\textwidth]{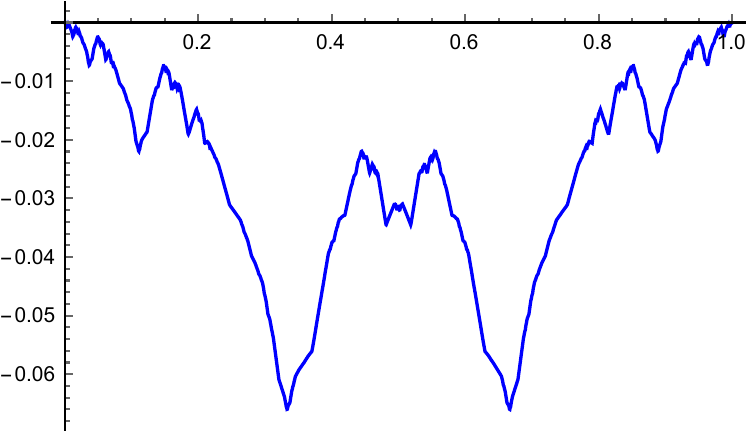}
    \caption{Truncation of the subaction $V$ as described on Theorem  \ref{Gfun}, where n=10.}
  \end{minipage}
  \hfill
  \begin{minipage}[b]{0.4\textwidth}
    \includegraphics[height=3cm,width=\textwidth]{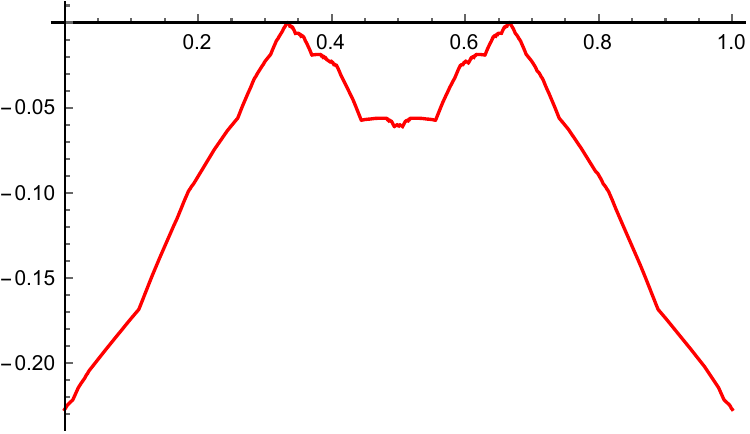}
    \caption{Truncation of the subaction $W$ as described on Theorem  \ref{Hfun}, where $n=10$.}
  \end{minipage}
\end{figure}
\begin{figure}[h]
    \centering
    \includegraphics[height=3cm,scale=0.4]{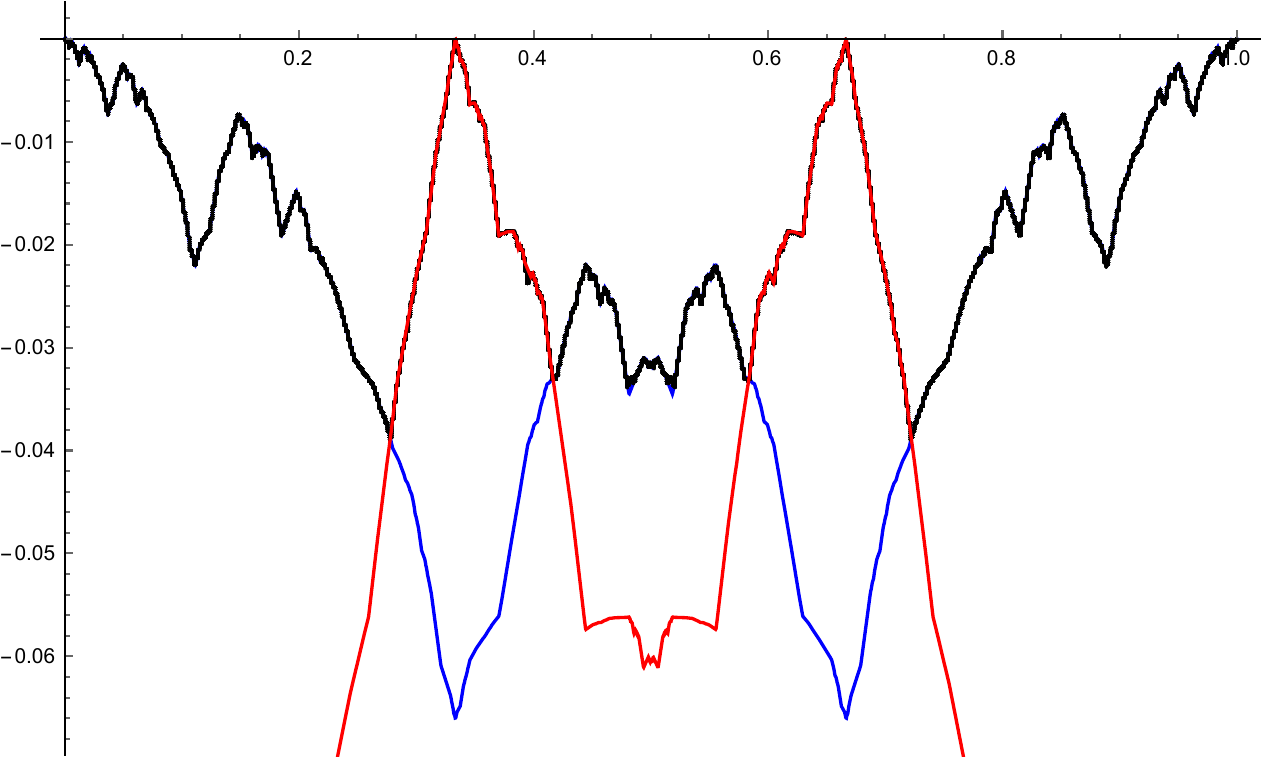}
    \caption{Superposition of the two above graphics resulting in a new subaction with the graph  in black. }
    \label{fig:my_label2}
\end{figure}

\section{Minus distance to the Cantor set} \label{Cantor}
Now, we  consider  the case where
$A(x)=-d(x,K)$ where \(d(x,K)=\min_{k\in K}|x-k|\) and $K\subset[0,1]$ is the Cantor.  Also, $T(x)= 2\,x$ (mod 1) acts on $[0,1]$ and the inverse branches of $T$ are  given by $\tau_1(x)=\frac{x}{2}$ and $\tau_2(x)=\frac{x+1}{2}$. In this section we present pictures we get from the use of the  $1/2$ iterative procedure and we present some conjectures.
We do not provide mathematical proofs. We  consider an approximation of the Cantor set via the mesh of points of the form
$m=\frac{1}{2}+\sum_{i=1}^{+\infty}a_i \frac{1}{3^i}$ where $a_i \in \{1,-1 \}$, and therefore we take
$\,\,\,\,\,\,A(x)=-d(x,K)=-\min_{(a_i) \in \{1,-1\}^\mathbb{N}}\left|x-\left( \frac{1}{2}+\sum_{i=1}^{+\infty}a_i \frac{1}{3^i}\right) \right|.$
It is easy to see that $m(A)=0$.
Note that $\{1/3, 2/3\,\}$ is contained on the Mather set. As $A$ is symmetric there is a symmetric subaction. Consider the truncation $\,\,\,\,\,\, A_n(x)=-\min_{(a_i)\in \{1,-1\}^{n}}\left|x-\left(\frac{1}{2}+\sum_{i=1}^{n}a_i \frac{1}{3^i}\right) \right|.$

The points $0$ and $1$ are also in the Mather set. We will try to get a subaction via
$V_1(x)-V_1(\tau_1(x))=A(\tau_1(x))$
and
$V_2(x)-V_2(\tau_2(x))= A(\tau_2(x))$. In this way we get
$V_1(x)=\sum_{i=1}^{+\infty}A\circ \tau_1^i(x)$ and $V_2(x)=\sum_{i=1}^{+\infty}A\circ \tau_2^i(x) = V_1(1-x).$
We conjecture that
$V(x)=V_1(x)I_{[0,1/2)}+V_1(1-x) I_{[1/2,1]}$
is a subaction
\begin{lemma}\label{lem_cantor1} The series
$G(x)=\sum_{i=1}^{+\infty} A(\tau_1^i(x))$ converges uniformly in $ [0,1]$.
\end{lemma}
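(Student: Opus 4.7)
The plan is to apply the Weierstrass M-test to the series, where the essential ingredient is the observation that $0$ lies in the Cantor set $K$, so distances to $K$ are controlled by distances to $0$.

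First I would compute the iterates explicitly: since $\tau_1(x)=x/2$, one has $\tau_1^i(x)=x/2^i$ for every $i\geq 1$. Thus the general term is $A(\tau_1^i(x))=-d(x/2^i,K)$. Next I would verify that $0\in K$: taking $a_i=-1$ for all $i$ in the series representation $1/2+\sum_{i\geq 1} a_i/3^i$ gives $1/2-\sum_{i\geq 1} 3^{-i}=1/2-1/2=0$ (and symmetrically, taking all $a_i=+1$ shows $1\in K$, which the authors already remark). This step is where the specific form of the Cantor-like set enters.

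From $0\in K$ I would derive the pointwise bound
\begin{equation*}
\bigl|A(\tau_1^i(x))\bigr|=d(x/2^i,K)\leq |x/2^i-0|=\frac{x}{2^i}\leq \frac{1}{2^i},
\end{equation*}
valid for every $x\in[0,1]$ and every $i\geq 1$. The sequence $M_i:=2^{-i}$ has $\sum_{i\geq 1} M_i=1<+\infty$, so the Weierstrass M-test implies that $\sum_{i\geq 1} A(\tau_1^i(x))$ converges uniformly on $[0,1]$.

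I do not expect any real obstacle here: the function $A=-d(\cdot,K)$ is $1$-Lipschitz (since $d(\cdot,K)$ is), so each partial sum is continuous, and uniform convergence then gives continuity of the limit $G$ for free, a fact which will presumably be used afterwards when verifying that $V_1=G$ (together with its mirrored counterpart $V_2(x)=G(1-x)$) actually assembles into a subaction. The only mild subtlety is to keep in mind that $A$ is defined through an infimum over the full Cantor set $K$ rather than one of its finite truncations $A_n$, but this is irrelevant for the bound above since we only use $0\in K$.
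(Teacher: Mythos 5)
Your proposal is correct and follows essentially the same route as the paper: the paper also bounds $d(x/2^i,K)\leq |x/2^i|$ (implicitly using that $0\in K$) and concludes uniform convergence by comparison with the geometric series, i.e.\ a Weierstrass M-test argument. Your write-up merely makes explicit the computation $\tau_1^i(x)=x/2^i$ and the verification that $0\in K$, which the paper leaves implicit.
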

\begin{proof}
Notice that \[G(x)= \sum_{i=1}^{+ \infty}-\min_{(a_i) \in \{1,-1\}^\mathbb{N}}\left|x/2^j-\left( \frac{1}{2}+\sum_{i=1}^{+\infty}a_i \frac{1}{3^i}\right) \right| \] and
\[\min_{(a_i) \in \{1,-1\}^\mathbb{N}}\left|x/2^j-\left( \frac{1}{2}+\sum_{i=1}^{+\infty}a_i \frac{1}{3^i}\right) \right| \leq \left|x/2^j \right|.\]
In this way  $|G|$  is bounded by a geometric series  and therefore we get the claim.
\end{proof}
As in the previous examples we also want to find $V,V_1,V_2$, such that,

$\max_{T(y)=x}[A(y)+V(y)]=\max\{V_1(x)+m(A),V_1(1-x)+m(A)\}=V(x).$

\begin{figure}[h]
    \centering
    \includegraphics[height=3cm,scale=0.45]{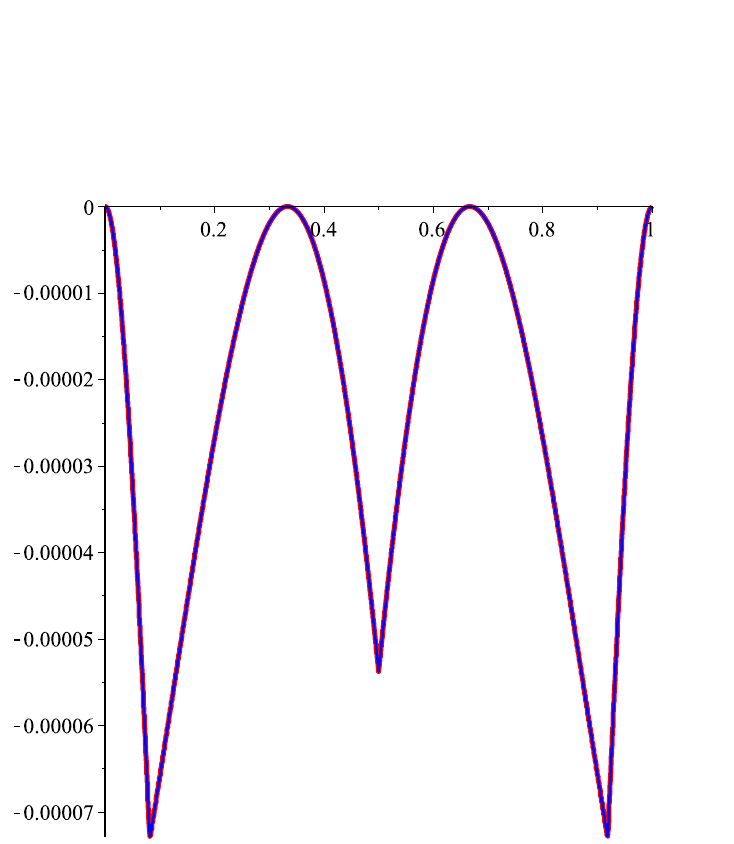}
    \caption{Case $A(x)=- x^2  (x-1/3)^2  (x-2/3)^2 (x-1)^2 $ -  This picture  describes the graph of the  function $V$ we get from the large iteration of $\mathcal{G}^{20}$ applied to the initial  function $f_0=0$. There is a numerical evidence that such $V$ is a calibrated subaction.}
    \label{fig:contraex_1}
\end{figure}
\begin{figure}[h]
    \centering
    \includegraphics[height=5cm,scale=0.55]{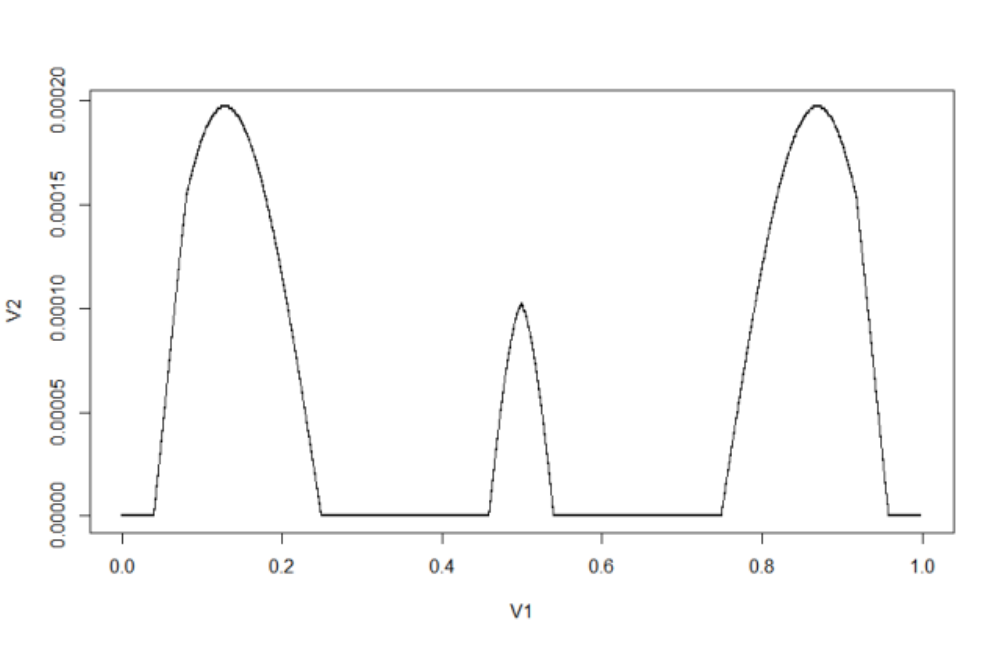}
    \caption{Case $A(x)=- x^2  (x-1/3)^2  (x-2/3)^2 (x-1)^2 $ -  This picture  describes the graph of the  function $R$ associated to the $V$ of last Figure  \ref{fig:contraex_1}. This graph confirm that there  is a numerical evidence that such $V$ is a calibrated subaction.}
    \label{fig:Rbom}
\end{figure}
\begin{conjecture}
\label{Gfun} Suppose $A(x)= -d(x,K)$ and $T(x)=2x$ mod(1), then, a subaction is given by
$V(x)=G(x)I_{[0,1/2)}(x)+G(1-x) I_{[1/2,1]}(x)$,
where
$G(x)=\sum_{i=1}^{+\infty} A(\tau_1^i(x)).$
\end{conjecture}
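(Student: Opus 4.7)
The plan is to verify the calibrated subaction equation $V(x) = \max_{T(y)=x}[A(y)+V(y)]$, using that $m(A)=0$ as already observed, for the candidate $V$. The two preimages under $T$ are $\tau_1(x) = x/2$ and $\tau_2(x) = (x+1)/2$, and the key structural input is the symmetry $A(y) = A(1-y)$, which follows from $K = 1-K$. I would first evaluate the two candidates on the right-hand side via a telescoping of the series defining $G$.

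For $x \in [0,1)$ one has $\tau_1(x) \in [0,1/2)$, so $V(\tau_1(x)) = G(x/2) = \sum_{i=2}^{\infty} A(x/2^i)$, giving
\begin{equation*}
A(\tau_1(x)) + V(\tau_1(x)) = A(x/2) + \sum_{i=2}^{\infty} A(x/2^i) = G(x).
\end{equation*}
Similarly $\tau_2(x) \in [1/2,1]$ yields $V(\tau_2(x)) = G(1 - \tau_2(x)) = G((1-x)/2)$, and combining with $A(\tau_2(x)) = A((1-x)/2)$ from the symmetry of $K$, the same telescoping gives
\begin{equation*}
A(\tau_2(x)) + V(\tau_2(x)) = G(1-x).
\end{equation*}
Thus the subaction equation reduces to showing $V(x) = \max\{G(x), G(1-x)\}$. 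Continuity of $V$ at $x = 1/2$ and across $0 \sim 1$ on $S^1$ is immediate because the two pieces agree at the seams, so the entire content of the conjecture is a pointwise comparison between $G(x)$ and $G(1-x)$.

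The heart of the matter, and where I expect the main obstacle to lie, is proving
\begin{equation*}
G(x) \geq G(1-x) \qquad \text{for every } x \in [0, 1/2],
\end{equation*}
equivalently $\sum_{i=1}^{\infty} [d((1-x)/2^i, K) - d(x/2^i, K)] \geq 0$. A naive term-by-term comparison fails: a numerical check at $x = 0.3$ shows the $i = 1$ summand is negative (since $0.15$ lies in the middle-third gap $(1/9, 2/9)$, while $0.35$ is close to $1/3 \in K$), with positivity of the total coming from later summands. Writing $u_i := x/2^i$ and $v_i := (1-x)/2^i$, the easy bound $d(u_i, K) \leq u_i$ gives $\sum_i d(u_i, K) \leq x$, so one needs a matching lower bound $\sum_i d(v_i, K) \geq x$.

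My proposed attack is a dyadic/self-similar decomposition: use $K = K/3 \cup (2/3 + K/3)$ to expand $d(y, K) = \min\{d(y, K/3), d(y, 2/3 + K/3)\}$, then partition $[0, 1/2]$ according to the first few base-$3$ digits of $x$ and reduce the inequality on each ternary sub-interval to a scaled version of itself. A motivating (non-rigorous) heuristic is that the dyadic sum $\sum_i f(x/2^i)$ approximates the log-scale integral $\int_0^{x/2} f(y)/(y \ln 2)\, dy$; applying this to $f = d(\cdot, K)$ turns $G(x) - G(1-x)$ into $\int_{x/2}^{(1-x)/2} d(y,K)/(y \ln 2)\, dy$, which is trivially $\geq 0$ since $d(\cdot, K) \geq 0$ and the interval of integration is nonempty for $x \in [0, 1/2]$. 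Making this heuristic rigorous without discarding the exact dyadic structure---that is, controlling the discrepancy between the dyadic sum and its integral analogue using the self-similarity of $K$---is where the real technical work lies, and where I would focus the bulk of the proof.
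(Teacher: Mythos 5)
Be aware first that the paper itself does not prove this statement: it is explicitly labeled a conjecture, and Section~\ref{Cantor} states that no mathematical proofs are given there (only Lemma~\ref{lem_cantor1}, the uniform convergence of $G$, is proved). Your opening reduction is correct and coincides with the route the paper sketches: the telescoping $A(x/2)+G(x/2)=G(x)$, together with the symmetry $d(y,K)=d(1-y,K)$ giving $A(\tau_2(x))+V(\tau_2(x))=G(1-x)$, shows that the calibrated subaction equation for the candidate $V$ is equivalent to the single comparison $G(x)\geq G(1-x)$ for $x\in[0,1/2]$, which is exactly the identity $\max_{T(y)=x}[A(y)+V(y)]=\max\{G(x),G(1-x)\}=V(x)$ that the paper wants. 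Up to that point your argument is sound (and you correctly note that a term-by-term comparison fails).

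The genuine gap is that this central inequality is never proved: what follows is a plan plus a heuristic, and one of your intermediate targets is actually false. You assert that, since $\sum_i d(x/2^i,K)\leq x$, ``one needs a matching lower bound $\sum_i d((1-x)/2^i,K)\geq x$.'' Take $x=1/2$: then $(1-x)/2^i=2^{-(i+1)}$, and the terms are $d(1/4,K)=0$ (note $1/4\in K$), $d(1/8,K)=1/72$, $d(1/16,K)=5/432$, with the remaining terms summing to well under $0.02$; the whole series is roughly $0.03\ll 1/2$. So that sufficient condition is unavailable (at $x=1/2$ the desired inequality holds only because it degenerates to an equality), and any proof must compare the two series directly rather than against the geometric majorant. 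The log-scale integral heuristic cannot be promoted to a proof as stated either: the discrepancy between $\sum_i f(x/2^i)$ and $\int f(y)/(y\ln 2)\,dy$ is of the same order as the difference $G(x)-G(1-x)$ you are trying to control, so nothing survives the error estimate. The self-similar decomposition of $K$ is a reasonable idea, but as written it is a proposal, not an argument; the conjecture therefore remains open after your write-up, just as it does in the paper.
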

Now, we want to try to find  another subaction  but this time associated to  the maximizing probability with support on $\{1/3,2/3\}$. In this way we will look for solutions of the form
$V_2(x)=V_1(\tau_1(x))+A(\tau_1(x))$ and
$V_1(x)=V_2(\tau_2(x))+A(\tau_2(x))$.
As in the previous examples $\eta(x)= \tau_2(\tau_1(x))$ take
$V_2(x)= \sum_{i=1}^{+\infty} \left(A(\tau_1(\eta^i(x)))+A(\tau_2(\tau_1(\eta^i(x))) \right),$ and $V_1(1-x)=V_2(x).$ As $\eta(2/3)=2/3$ one can show that this series is absolutely convergent (similar to the previous Lemma \ref{lem_cantor1}). Define
$H(x)=\sum_{i=0}^{+\infty} \left(A(\tau_1(\eta^i(x)))+A(\tau_2(\tau_1(\eta^i(x))) \right)$. We want to show that
$W(x)=H(1-x)I_{[0,1/2)}(x)+H(x) I_{[1/2,1]}(x)$
is a subaction.  In the same way as before we want  to show that

$\,\,\,\,\max_{T(y)=x}[A(y)+V(y)]=\max\{H(x),H(1-x)\}=W(x).$

\begin{conjecture} \label{Hfun} The function $W$ given by
$\,\,\,\,W(x)=H(x)I_{[0,1/2)}(x)+H(1-x) I_{[1/2,1]}(x),$
$H(x)=\sum_{i=0}^{+\infty} \left(A(\tau_1(\eta^i(x)))+A(\tau_2(\tau_1(\eta^i(x))) \right),$
is a subaction for $A$.
\end{conjecture}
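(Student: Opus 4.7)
The plan is to first establish convergence and continuity of the series defining $H$, then verify via a telescoping functional equation that the pair $V_2 := H$, $V_1(x) := H(1-x)$ satisfies the two co-cycle relations implicit in the construction, and finally reduce the subaction equation to a monotonicity/comparison inequality that I expect to be the main obstacle.

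For convergence, recall that $K$ is symmetric about $1/2$ and that $\{1/3,2/3\}\subset K$, so $A(1/3)=A(2/3)=0$ and $A$ is $1$-Lipschitz on $[0,1]$. The map $\eta(x) = \tau_2\circ\tau_1(x) = x/4 + 1/2$ contracts $[0,1]$ with ratio $1/4$ to the unique fixed point $q=2/3$, and $\tau_1(q) = 1/3 \in K$. Hence one gets the geometric bounds
\[ |A(\tau_1(\eta^i(x)))| \leq \tfrac{1}{2}\left(\tfrac{1}{4}\right)^{i}|x-2/3|, \qquad |A(\tau_2(\tau_1(\eta^i(x))))| = |A(\eta^{i+1}(x))| \leq \left(\tfrac{1}{4}\right)^{i+1}|x-2/3|, \]
which yield uniform absolute convergence of $H$ on $[0,1]$ and hence continuity. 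Continuity of $W$ on $S^1$ reduces to matching values at the seams $x = 1/2$ and $x\sim 0 \sim 1$, which is immediate from the symmetry $W(x)=W(1-x)$ built into its definition.

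Telescoping the series produces the functional equation $H(x) - H(\eta(x)) = A(\tau_1(x)) + A(\eta(x))$. Combined with the symmetries $A(x)=A(1-x)$ and $\tau_2(x) = 1 - \tau_1(1-x)$, and setting $V_2 := H$, $V_1 := H(1-\cdot)$, I would verify the two co-cycle identities
\[ V_2(x) = V_1(\tau_1(x)) + A(\tau_1(x)), \qquad V_1(x) = V_2(\tau_2(x)) + A(\tau_2(x)). \]
These identities immediately deliver one direction of the subaction equation, namely $\max_{T(y)=x}\{A(y) + W(y)\} \geq W(x)$, by choosing $y$ to be the realizer branch dictated by the piecewise definition of $W$.

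The remaining step is the reverse inequality: for every $x$ the \emph{non-realizer} branch must satisfy $A(y) + W(y) \leq W(x)$, and in addition the indicator description of $W$ forces the monotonicity $H(x) \geq H(1-x)$ on $[0,1/2)$. Both are comparisons between infinite telescoping sums of the non-smooth function $A = -d(\cdot,K)$, and they depend sharply on where $x$ sits within the Cantor hierarchy. I expect the cleanest approach to exploit the self-similarity $K = \tfrac{1}{3}K \cup (\tfrac{2}{3} + \tfrac{1}{3}K)$ to set up an induction on Cantor scales, matching corresponding terms of the two series level by level and reducing the comparison to a finite check on each gap interval of $K$. Without this control one obtains only a supersolution of the calibrated subaction equation, which is precisely why the statement is phrased as a conjecture rather than as a theorem.
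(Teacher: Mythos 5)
The paper itself does not prove this statement: it appears in a section that explicitly disclaims mathematical proofs and is labelled a conjecture, so the only question is whether your outline closes that gap, and it does not. Two essential steps remain unproved assertions. First, the cocycle identities $V_2(x)=V_1(\tau_1(x))+A(\tau_1(x))$ and $V_1(x)=V_2(\tau_2(x))+A(\tau_2(x))$ with $V_2:=H$ and $V_1:=H(1-\cdot)$ do not follow from telescoping plus the symmetry of $A$ in any immediate way. Telescoping only yields the \emph{composed} relation $H(x)-H(\eta(x))=A(\tau_1(x))+A(\eta(x))$ (and, as written, not even that: with the sum starting at $i=1$ as in the statement, the telescope gives $H(x)-H(\eta(x))=A(\tau_1(\eta(x)))+A(\eta^2(x))$, i.e.\ the relation at $\eta(x)$, so you need the $i=0$ normalization). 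Splitting that two-step relation into the two one-step relations, with $V_1$ \emph{defined} as $H(1-\cdot)$ rather than by the cocycle itself, is equivalent to the unproved identity $H(1-\tau_1(x))=H(x)-A(\tau_1(x))$ for the Cantor-distance potential; nothing in your proposal establishes this compatibility, and it is precisely part of what makes the statement conjectural.

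Second, even granting the cocycle identities, your claim that they ``immediately deliver'' $\max_{T(y)=x}[A(y)+W(y)]\ge W(x)$ is not correct: for $x\in[0,1/2)$ the candidate $y=\tau_1(x)$ lies in $[0,1/4)$, where $W(y)=H(y)$, while the cocycle gives $A(y)+H(1-y)=W(x)$; so already this direction needs the comparison $H(y)\ge H(1-y)$ on that range, which is exactly the monotonicity you defer. Hence \emph{both} directions of the subaction equation reduce to the comparison inequalities that you leave to an unexecuted ``induction on Cantor scales,'' and, as you yourself concede, without that control one obtains at most a supersolution. Your outline reproduces essentially the same heuristic derivation the paper uses to produce $H$ (convergence as in Lemma \ref{lem_cantor1}, the composed functional equation, the symmetric reflection), but it stops where the paper stops: the statement remains a conjecture, not a theorem.
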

Above we conjectured that  $W $ and $ V$ were subactions. If this was true, then
$\max \{W+C_1,V+C_2\}$
is also a subaction, where $C_1,C_2\in \mathbb{R}$.

\section{A potential $A$ which is equal to its subaction $u$.} \label{equal}

Taking  $T(x)=2x$ mod(1) the inverse branches are $\tau_1(x)=1/2$, $\tau_2(x)=(x+1)/2$, which satisfy the equation $1-\tau_1(1-x)=\tau_2(x)$.
We will exhibit a potential $A$ which is equal to its subaction $u$. In order to derive the solution, we will make some assumptions on $u$.
 Suppose $u$ is symmetric of the form
\begin{equation}\label{u_A_potencial_fixo}
u(x)=\left\{\begin{matrix}
 f(x),&x<1/2
\\
f(1-x), & x\geq 1/2,
\end{matrix}\right.
\end{equation}
where
\[f(x)=\left\{\begin{matrix}
 g_1(x),& x<1/3
\\
g_2(x), &1/2 \geq  x\geq 1/3.
\end{matrix}\right.
\]
We assume that for  $x\in [0,1/2]$, the value $\displaystyle\max_{T(y)=x}[2\,u(y)]$ is realized by $\tau_2$ and for  $x\in [1/2,1]$ it is realized by $\tau_1$. Then, we get the system

$g_1(x)+{m}(A)=2g_2(1-\tau_2(x)),\,\, g_2(x)+{m}(A)=2g_1(1-\tau_2(x)),$

$g_1(1-x)+{m}(A)=2g_2(\tau_1(x)),\,\,\,g_2(1-x)+{m}(A)=2g_1(\tau_1(x)). $

Two of the above equations are redundant. Taking $\eta(x)=\frac{1+x}{4}$, we get the system
\begin{equation*}\label{eq_sis_fix} g_1(x)+{m}(u)=2g_2(1-\tau_2(x))\,\text{ and }\,g_2(x)+{m}(u)=2g_1(1-\tau_2(x)).
\end{equation*}

After some computations we  get
\[g_1(x)=\alpha \left(x-\frac{1}{3}\right) + \beta, \text{ and } g_2(x)=\alpha \left(\frac{1}{3}-x\right)+\beta,\]
with the constrains
$g_1(\tau_1(x))\leq g_2(1-\tau_2(x)),\quad x \in [0,1/3]$ and
$g_1(\tau_1(x)) \leq g_1(1-\tau_2(x)) \quad x \in [1/3,1/2].$ This means  (taking $\alpha>0$) that
$-\alpha/6 \leq 0, \quad x \in [0,1/3]$ and
$\alpha(x-1/2)\leq 0, \quad x \in [1/3,1/2].$
As in this case this is always true we finally get for $x \in (0,1/2)$, $\displaystyle\max_{T(y)=x}[2\,\,u(y)]=u(x)+\beta.$
By symmetry the same is true for $x \in (1/2,1).$
\begin{figure}[h]
    \centering
    \includegraphics[height=3cm,scale=1]{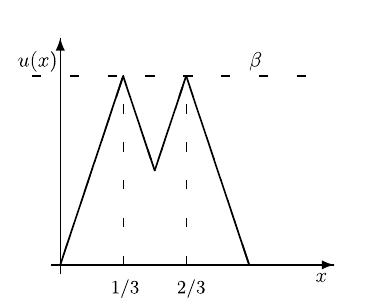}
    \caption{\textit{The graph of the function $A=u$ in  (\ref{u_A_potencial_fixo})}}
    \label{fig:my_label1}
\end{figure}
In this case $A=u$ (where $u$ is the subaction),  $m(A)=\beta$ and the maximizing probability has support on the orbit of period $2$. The general picture of the graph of $A=u$ is presented on Figure \ref{fig:my_label1}. A particular example could be $\alpha=0.4$ and $\beta=1$.

\section{Approximating the eigenfunction of the Ruelle operator} \label{Ru}

In this section, we will show that a variation of the  $1/2$ iterative procedure works fine also for approximating the eigenfunction of the Ruelle operator. Given a H\"older potential $A:[0,1] \to \mathbb{R}$ denote $L_A$ the Ruelle operator, that is given $f$, then $L_A(f)=g$, means
$$ L_A(f) (x)=g(x) = e^{A( \tau_1(x))} f( \tau_1(x) ) +  e^{A( \tau_2(x))} f( \tau_2(x) ).$$
It is known that there exists in this case an eigenvalue $\lambda>0$ and a positive eigenfunction $\varphi$ such that $L_A(\varphi)= \lambda \varphi$ (see \cite{PP}). We will define an operator $G$, such that, if $G(h)= h$, then, $e^h$ is the eigenfunction of the Ruelle operator.
\begin{figure}[h]
	\centering
	\includegraphics[height=3cm,scale=0.55]{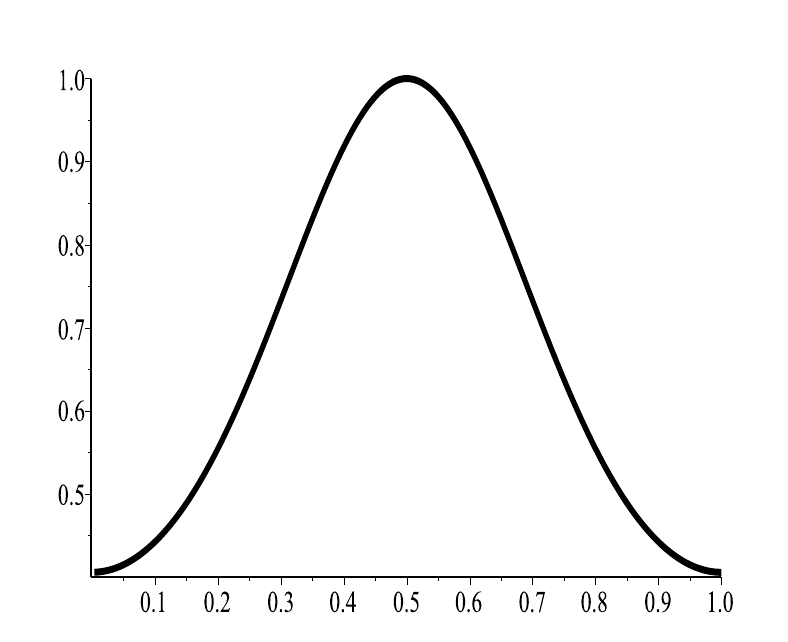}
	\caption{Case $A(x)= \sin^2 (2 \pi x)$ - approximating the eigenfunction and the eigenvalue  the Ruelle operator  - We consider the operator $G$ and the Ruelle operator $L_A$. In this Figure we plot the graph  of $ e^{G^{10}(0)}$  in blue and  the graph of
		$ \frac{1}{\lambda} \,L_A (e^{G^{10}(0)})$ in red, with $\lambda$ approximately equal to $3.472$.  There is a numerical evidence that $\varphi = e^{G^{10}(0)}$ is a good approximation to the eigenfunction of $L_A$. }
	\label{fig:fiu1}
\end{figure}
We define first the operator $\tilde{G}$ acting on functions in, such way that,
$$ \tilde{G}(g) = \frac{1}{2} g + \frac{1}{2} \log ( L_A (e^g)). $$
Finally, we define $G$ by $G(g) = \tilde{G} (g) - \tilde{G} (g)(0.5).$
One can show that for any $f,g>0$ we have that
$  |\tilde{G}(f) -    \tilde{G}(g) |_0\leq |f-g|_0$.

\medskip
\begin{remark}\label{oldremark4}
Once more the introduction of the $1/2$ factor helps on the procedure of iterating the operator $f \to G(f)$ to  an initial condition $f_0$. Indeed, in the same way as in Remark~\ref{oldremark3}, if for a point $z_0$ the signs of
$\frac{1}{2} (f-g) (z_0)$ and  $\frac{1}{2} \log ( L_A (e^g))(z_0)$ are different,
then one get a better contraction rate then one would get using the operator $f \to \log ( L_A (e^f))$.
\end{remark}
Suppose $G(h)=h$, then
\begin{equation} \label{cob}  h = \frac{1}{2} h + \frac{1}{2} \log ( L_A (e^h)) - c ,
\end{equation}
where $c$ is a constant.Take $\varphi$ such that $\log (\varphi)=h$ and $1/2\, \log (\lambda) =c.$ Then, we get
$  \frac{1}{2} \log (\varphi) = \frac{1}{2} \log ( L_A (\varphi)) - \frac{1}{2}\,\log \lambda .$
This means that
$ \lambda \,\varphi=  L_A (\varphi)) .$
We can approximate the eigenfunction $\varphi$ via high iterates of $G^n(0)$.
We applied this method for the potential $A(x)= \sin^2 (2 \, \pi \,x)$ and $T(x)= 2 x$ (mod 1).  Then, we plot
$e^{G^{10}(0)}$ and  $\frac{1}{\lambda}\,L_A (  e^{ G^{10}(0)}  )$ in Figure \ref{fig:fiu1} with $\lambda$ approximately equal to $3.472$.
We do not have to worry about the value $c$ above in equation \eqref{cob}. In order to estimate $\lambda$ we just take the value  $\lambda =\frac{L_A (  e^{ G^{10}(0)}  ) (0.4) }{e^{G^{10}(0)}(0.4)}.$

\section{The  $1/2$ iterative procedure applied to the   case where $A$ has more than one maximizing probability.} \label{more}

The discussion that will be made in this section only addresses questions regarding numerical evidence obtained from the  $1/2$ iterative procedure. We do not present rigorous proofs in this section. The interest here is to
understand better the dynamics of the  $1/2$ iterative procedure on the case where there is more than one maximizing probability. In this case  more than one calibrated subaction may exist. In some
sense, there are different basins of attractions for different subactions (depending where one begins - the initial condition -  the iteration of the  $1/2$ iterative procedure).

Consider the potential
$A(x)=- x^2  (x-1/3)^2  (x-2/3)^2 (x-1)^2 $ which has maximal value $m(A)=0$ and
Mather set equal to $\{0,1/3,2/3\}$ (when the setting is  $S^1$ and not $[0,1]$). The ergodic maximizing probabilities are $\mu_1=\delta_0$ and $\mu_2 = \frac{1}{2} (\delta_{1/3} + \delta_{2/3}).$
In this case, there exists more than one calibrated subaction (see Theorems 12 and 15 in \cite{GL1} or Theorem 5 in \cite{GLT}). One can get numerical evidence of the graph of these different calibrated subactions by considering the iteration of
$\mathcal{G}$ on distinct initial conditions. What kind of numerical evidence we can get from the use of the  $1/2$ iterative procedure? Taking the initial condition $f_0=0$ and iterating $\mathcal{G}$ we get the function $V$ which has  the graph shown on Figure \ref{fig:contraex_1}. This function $V$ "should be" a calibrated subaction. The graph of the associated function  $R$ (see expression (\ref{x1})) is displayed on Figure~\ref{fig:Rbom}. Suppose  we did not know
in advance where the Mather set is.  From Figure \ref{fig:Rbom} we have numerical evidence that the values of $R$ on the two periodic  orbits $\{0\}$ and $\{1/3,2/3\}$ are equal to zero (or, $\sim 0$).

The general idea is: even in the case the maximizing probability is not unique we get numerical evidence about the possible maximizing probabilities. Another initial condition $f_0$ can be attracted to another calibrated subaction $V$ by iteration of $\mathcal{G}.$ Indeed, let $\alpha_{\varepsilon,a}:[0,1] \to \mathbb{R}$ be a piecewise linear bump function defined by
\[ \alpha_{\varepsilon,a}(x)=
\left\{
\begin{array}{ll}
0, & 0 \leq x \leq a-\varepsilon \\
kx-k(a-\varepsilon), & a-\varepsilon \leq x \leq a \\
-kx+k(a+\varepsilon), & a\leq x \leq a+\varepsilon \\
0, & a+\varepsilon \leq x \leq 1 \\
\end{array}
\right.
\]
where $a \in (0,1)$ and $\varepsilon >0$ is arbitrary small. We consider two different initial conditions:\\
a) $A(x)=-x^2(x-1/3)^2(x-2/3)^2(x-1)^2$ and $f_0(x)=\alpha_{0.01,1/5}(x)$:
\begin{figure}[h!]
	\centering
	\includegraphics[height=3cm,width=4cm]{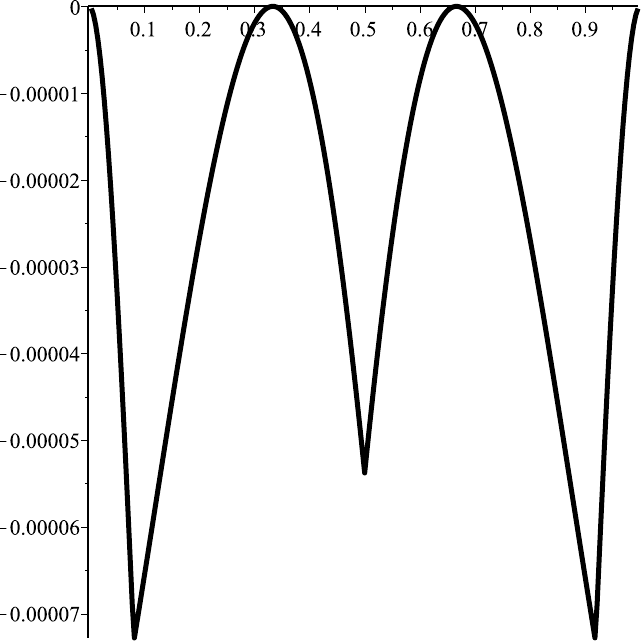}
	\caption{The approximated subaction obtained from the initial condition   $f_0(x)=\alpha_{0.01,1/5}(x)$.}\label{fig:elis1}
\end{figure}
In this case, there is numerical evidence that the high iterates  $\mathcal{G}^n (f_0)$ converge to  the graph described by Figure~\ref{fig:elis1}.\\
b)$A(x)=-x^2(x-1/3)^2(x-2/3)^2(x-1)^2$ and  $f_0(x)=\alpha_{0.01,2/3}(x)$:
\begin{figure}[h!]
	\centering
	\includegraphics[height=3cm,width=4cm]{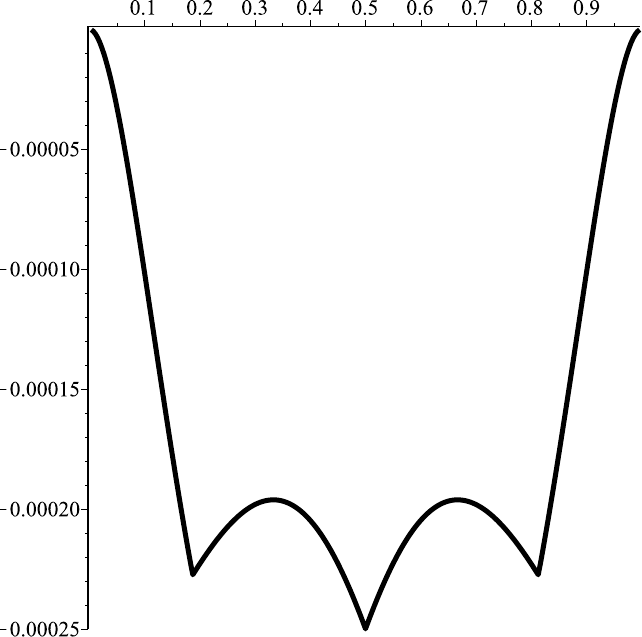}
	\caption{The approximated subaction obtained from the initial condition  $f_0(x)=\alpha_{0.01,2/3}(x)$.}\label{fig:elis2}
\end{figure}
In this case there is a numerical evidence that the high iterates of $\mathcal{G}^n (f_0)$ converge to  the graph described by Figure~\ref{fig:elis2}. In these two last cases, the graph of the corresponding $R$ (we do not present then here) also confirms the numerical evidence that such  functions $V$ are calibrated subactions. Interesting future work is to analyze the basin of attraction of each subaction by the iteration $\mathcal{G}^n$.
\begin{figure}[h]
	\centering
	\includegraphics[height=5cm,scale=0.85]{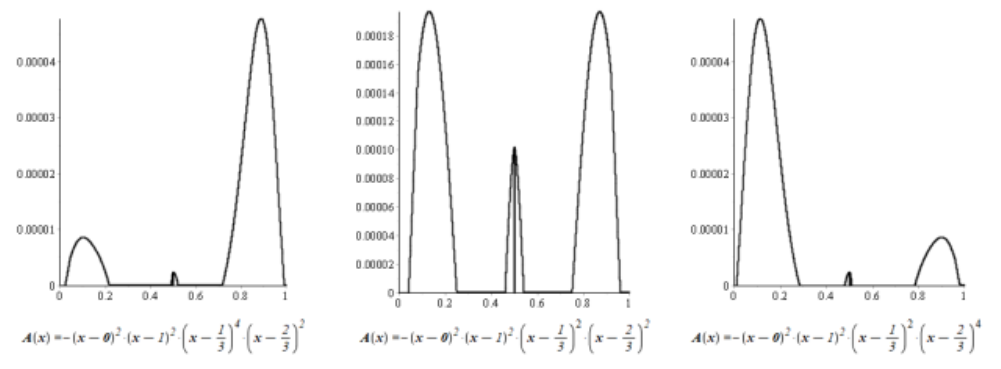}
	\caption{These pictures  describe the graph of the  function $R$ we get from the approximated calibrated subaction $V$ (obtained by large iteration of $\mathcal{G}$ applied to the initial  function $f_0= 0$) for different potentials $A$.}
	\label{fig:flatness_of_R}
\end{figure}
We can also show the influence of the flatness of the potential $A$ on the function $R$ (see Figure~\ref{fig:flatness_of_R}). One can see from these pictures  that the flatness  of the potential,  around a certain point on the Mather set, shows a clear influence on the size of the interval where $R\sim 0$ around this point. There is an increase of this size when the flatness increase. We strongly believe that the function $R$ is piecewise analytic (just proceeding in a similar way as in Section \ref{pri} or Section~\ref{revis}). Therefore, can not be equal to zero on an interval. The numerical roundoff error can cause a wrong impression (to be constant equal zero on an interval). The right conclusion is that the flater is the potential around one point in the Mather set more flat is $R$ around this point. The influence of flatness in the zero temperature limit of equilibrium probabilities was considered in \cite{BLL} and \cite{Mohr}.

\section{Appendix} \label{app}
\subsection{The subaction equation in the case     $A(x)=\sin^2(2 \pi x)$   } \label{ggs}
In this section we consider the case  $A(x)=\sin^2(2 \pi x)$ which was initially discussed on Section \ref{ggu}. We want to give more details on the  proofs. We want to show first that $ V(x)=\sup\,\{\,V_1(x),V_2(x)\,\} $
is a calibrated subaction for $A$, when $V_1$ and $V_2$ are described by (\ref{eq:sin2v}). Remember that for all $x$ we have $V_1(x)= V_2(1-x).$ Later we will present the  power expansion for $V_2$ which will show can be  described by (\ref{khj}).

\medskip

\begin{lemma}\label{lemma_maj} If $\displaystyle V_2(x)=\lim_{n \rightarrow +\infty}V_2^{n*}(x) $, then
$\displaystyle V_2(x)=\sum_{i=0}^{N}\left(F\circ \eta^i(x)-2\hat{m}(A)\right) + \epsilon_N(x)$,
where
$|\epsilon_N(x) | \leq 2\pi \sum_{i=N}^{+\infty}\frac{1}{4^i}= \frac{2 \pi}{3 \cdot 4^{N-1}} \leq \frac{2}{3 \cdot 4^{N-2}}.$
\end{lemma}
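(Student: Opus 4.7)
The plan is to recognize that this lemma is essentially a repackaging of the tail estimate that is implicit in the proof of Proposition \ref{prop1}. By definition $V_2(x) = \lim_{n \to \infty} V_2^{n*}(x) = \sum_{i=0}^{\infty}(F\circ \eta^i(x) - 2\hat{m}(A))$, and since Proposition \ref{prop1} shows this series is absolutely (and uniformly) convergent, the residue
\[\epsilon_N(x) \;=\; V_2(x) - \sum_{i=0}^{N}\bigl(F\circ \eta^i(x) - 2\hat{m}(A)\bigr) \;=\; \sum_{i=N+1}^{\infty}\bigl(F\circ \eta^i(x) - 2\hat{m}(A)\bigr)\]
is well-defined, and the task reduces to estimating this tail by $2\pi \sum_{i=N}^{\infty} 4^{-i}$.

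First I would reuse the two ingredients already established in the proof of Proposition \ref{prop1}: (i) the explicit contraction $\eta^i(x) - 2/3 = 4^{-i}(x-2/3)$, and (ii) the Lipschitz bound (with constant $K \leq 1$) for $\sin^2$ on $[0,1]$. Together with $2\hat{m}(A) = \sin^2(2\pi/3) + \sin^2(\pi/3)$ and the triangle inequality, these yield the termwise estimate
\[|F \circ \eta^i(x) - 2\hat{m}(A)| \;\leq\; \frac{K\pi}{4^i}|x - 2/3| + \frac{K\pi}{2 \cdot 4^i}|x - 2/3| \;=\; \frac{3K\pi}{2 \cdot 4^i}|x-2/3|,\]
and since $|x - 2/3| \leq 1$ and $K \leq 1$, this is bounded by $2\pi/4^i$. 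Summing from $i = N+1$ and dominating crudely by the sum from $i=N$ gives
\[|\epsilon_N(x)| \;\leq\; 2\pi \sum_{i=N+1}^{\infty} \frac{1}{4^i} \;\leq\; 2\pi \sum_{i=N}^{\infty} \frac{1}{4^i}.\]
Evaluating the geometric series produces $2\pi \cdot \frac{4}{3 \cdot 4^N} = \frac{2\pi}{3 \cdot 4^{N-1}}$, and the trivial inequality $\pi \leq 4$ finally delivers the bound $\frac{2}{3 \cdot 4^{N-2}}$ claimed in the statement.

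There is essentially no obstacle: the content of the lemma is quantitative, not conceptual, and every estimate needed has already been manufactured for Proposition \ref{prop1}. The only minor point to watch is a cosmetic index shift, since the partial sum in the lemma includes the term with index $N$ (so the tail starts at $N+1$), while the bound is phrased with the sum starting at $N$; the inequality $\sum_{i=N+1}^{\infty} \leq \sum_{i=N}^{\infty}$ absorbs this at no cost and explains why the stated bound leaves a comfortable constant factor.
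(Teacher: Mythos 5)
Your proposal is correct and follows essentially the same route as the paper: the paper's proof is just the one-line remark that one applies the Lipschitz bound for $\sin^2$ (it quotes constant $2$, which plays the role of your $K$) to the tail of the series, exactly the termwise estimate you recycle from Proposition \ref{prop1}. Your explicit handling of the index shift and the geometric tail simply fills in the computation the paper leaves implicit.
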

\begin{proof}
	 We just have to use the property that $\sin^2$ has Lipchitz constant equal $2$.
\end{proof}
We want to show that $V_2$ indeed satisfies \eqref{op2}.

\begin{lemma} \label{lemma_v_2ex1} If $\displaystyle V_2(x)=\lim_{n \rightarrow +\infty}V_2^{n*}(x) $, then
\[V_2(x)=V_2(\eta(x))+A\left(\frac{x}{2}\right)+ A\left(\frac{x}{4}+\frac{1}{2}\right)-2\,\hat{m}(A).\]
\end{lemma}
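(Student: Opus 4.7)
The plan is to derive the claimed relation directly from the definition of $V_2$ as a uniformly convergent series, by peeling off the first term and reindexing. Recall that
\[
V_2(x) = \lim_{n\to\infty} V_2^{n*}(x) = \sum_{i=0}^{\infty}\bigl[F\circ\eta^i(x) - 2\hat{m}(A)\bigr],
\]
where $F(x) = A(x/2) + A(x/4 + 1/2)$ and $\eta(x) = x/4 + 1/2$. Since Proposition~\ref{prop1} (or equivalently Lemma~\ref{lemma_maj}) guarantees uniform (hence absolute, pointwise) convergence of this series, I may freely separate the $i=0$ summand and shift the remaining index.

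First I would write
\[
V_2(x) = \bigl[F(x) - 2\hat{m}(A)\bigr] + \sum_{i=1}^{\infty}\bigl[F\circ\eta^i(x) - 2\hat{m}(A)\bigr].
\]
Then I would reindex the tail by $j = i-1$, using $\eta^i(x) = \eta^{j}(\eta(x))$, to obtain
\[
\sum_{i=1}^{\infty}\bigl[F\circ\eta^i(x) - 2\hat{m}(A)\bigr] = \sum_{j=0}^{\infty}\bigl[F\circ\eta^j(\eta(x)) - 2\hat{m}(A)\bigr] = V_2(\eta(x)).
\]
Finally, substituting $F(x) = A(x/2) + A(x/4 + 1/2)$ yields
\[
V_2(x) = V_2(\eta(x)) + A\!\left(\tfrac{x}{2}\right) + A\!\left(\tfrac{x}{4}+\tfrac{1}{2}\right) - 2\hat{m}(A),
\]
which is the identity to be proven.

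There is essentially no serious obstacle: the only point requiring care is justifying the rearrangement in the second step, but this is immediate from the uniform convergence bound $|F\circ\eta^i(x) - 2\hat{m}(A)| \le C/4^i$ established in the proof of Proposition~\ref{prop1}, which provides an integrable majorant (a convergent geometric series) and legitimizes splitting and reindexing termwise. Thus the whole argument reduces to bookkeeping around an absolutely convergent telescoping structure intrinsic to the construction of $V_2$ via $\eta$.
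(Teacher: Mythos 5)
Your proof is correct and follows essentially the same route as the paper: the paper's proof also writes $V_2(x)=\sum_{i\ge 0}H(\eta^i(x))$ with $H(x)=A(x/2)+A(x/4+1/2)-2\hat m(A)$, identifies the shifted series $\sum_{i\ge 1}H(\eta^i(x))$ with $V_2(\eta(x))$, and subtracts. Your explicit appeal to the uniform convergence from Proposition~\ref{prop1} to justify splitting off the first term is the only (minor) addition; the argument is the same.
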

\begin{proof}
 Denote  $H(x)=A\left(\frac{x}{2}\right)+ A\left(\frac{x}{4}+\frac{1}{2}\right)-2\,\hat{m}(A)$.
Then, $\displaystyle V_2(x)=\sum_{i=0}^{+\infty}H(\eta^i(x))$
and $V_2(\eta(x))= \sum_{i=1}^{+\infty}H(\eta^i(x)).$
Therefore,
$V_2(\eta(x))=\sum_{i=0}^{+\infty}(H(\eta^i(x)) -H(x).$ From this follows
$V_2(\eta(x))=V_2(x)- H(x) ,$
and, finally
$V_2(x)=V_2(\eta(x))+A(\frac{x}{2})+ A(\frac{x}{4}+\frac{1}{2})-2\hat{m}(A)$.
\end{proof}
\begin{lemma}\label{lemma_v_2_ex_}
If  $\displaystyle V_2(x)=\lim_{n \rightarrow +\infty}V_2^{n*}(x) $ and $\hat{m}(A)=\frac{A(1/3)+A(2/3)}{2}$, then the function
$V_1(x)=V_2((x+1)/2)+A((x+1)/2)-\hat{m}(A)$
 satisfies
$V_1(x/2)+A(x/2)=V_2(x)+\hat{m}(A).$
\end{lemma}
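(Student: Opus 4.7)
The plan is to reduce this claim directly to the functional equation for $V_2$ established in Lemma~\ref{lemma_v_2ex1}. The idea is purely algebraic: I will substitute $x/2$ into the defining expression for $V_1$, recognize that the composition $\tau_2 \circ \tau_1$ equals the map $\eta$ appearing in the functional equation for $V_2$, and then cancel.

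First I would write out, by definition of $V_1$,
\[
V_1(x/2) \;=\; V_2\!\left(\tfrac{x/2+1}{2}\right) + A\!\left(\tfrac{x/2+1}{2}\right) - \hat{m}(A).
\]
Next I would observe that $(x/2+1)/2 = x/4 + 1/2 = \eta(x)$, so
\[
V_1(x/2) + A(x/2) \;=\; V_2(\eta(x)) + A(\eta(x)) + A(x/2) - \hat{m}(A).
\]
At this point Lemma~\ref{lemma_v_2ex1} applies: it says exactly that
\[
V_2(\eta(x)) + A(x/2) + A(\eta(x)) \;=\; V_2(x) + 2\hat{m}(A),
\]
since $A(x/2)+A(x/4+1/2)$ is the function $F$ evaluated at $x$. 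Substituting this identity yields $V_1(x/2) + A(x/2) = V_2(x) + 2\hat{m}(A) - \hat{m}(A) = V_2(x) + \hat{m}(A)$, which is the desired relation.

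There is really no substantial obstacle here; the entire content lies in recognizing the compositional identity $\tau_2 \circ \tau_1 = \eta$ and then invoking the previous lemma. The only bookkeeping point worth double-checking is that the constant $\hat{m}(A)$ on the two sides balances correctly, which it does because the definition of $V_1$ already absorbs one copy of $\hat{m}(A)$ while the functional equation for $V_2$ produces the other. No convergence or regularity issues arise, since Proposition~\ref{prop1} already guarantees that $V_2$ is well defined and continuous, and Lemma~\ref{lemma_v_2ex1} uses this to establish the functional equation being invoked.
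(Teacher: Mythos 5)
Your proposal is correct and is essentially the paper's own argument: both evaluate the defining relation for $V_1$ at $x/2$, identify $(x/2+1)/2 = x/4+1/2 = \eta(x)$, and then invoke Lemma~\ref{lemma_v_2ex1} to replace $V_2(\eta(x))+A(x/2)+A(x/4+1/2)$ by $V_2(x)+2\hat{m}(A)$, so the constants balance. The paper merely phrases the final substitution by adding and subtracting $V_2(x)$, which is the same cancellation you perform directly.
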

\begin{proof}  From the relation between $V_1$ and $V_2$ we have
$V_2((x+1)/2)+A((x+1)/2)=V_1(x)+\hat{m}(A).$
Taking composition with $\tau_1(x)=x/2$ we get
\[V_1(x/2)+A(x/2)=V_2(x/4 + 1/2) +A(x/4 +1/2) + A(x/2) -\hat{m}(A)\]
\begin{equation} \label{eq:subst_ex2}
= V_2(\eta(x))+A(x/2) +A(x/4 +1/2) -\hat{m}(A).
\end{equation}
From Lemma \ref{lemma_v_2ex1} we obtain
$V_2(\eta(x))-V_2(x)=2\hat{m}(A)-(A(x/2) +A(x/4 +1/2)),$
therefore, adding and  subtrating $V_2(x)$ in (\ref{eq:subst_ex2}) we have
\[V_1(x/2)+A(x/2)=V_2(\eta(x))-V_2(x)+V_2(x)+A(x/2) +A(x/4 +1/2) -\hat{m}(A)\]
\[=2\hat{m}(A)-(A(x/2)+A(x/4+1/2))+V_2(x)+A(x/2) +A(x/4 +1/2) -\hat{m}(A).\]
Finally,
$V_1(x/2)+A(x/2)=V_2(x)+\hat{m}(A).$
\end{proof}

Now we need some differentiability results for  $V_1$ e $V_2$.

\begin{proposition} $V_2(x)$ is differentiable in $ [0,1]$ and $V_2'(x)=$

$\sum_{i=0}^{+\infty} 2 \pi(\eta^i)'(x)\left(\sin\left(\pi \eta^i(x) \right)\cos(\pi \eta ^i (x)) +\frac{1}{2}\sin \left( \frac{\pi \eta^i(x)}{2} \right)\cos \left (\frac{\pi \eta ^i (x)}{2} \right)  \right).$
\end{proposition}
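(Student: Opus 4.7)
The plan is to apply the standard term-by-term differentiation theorem: if a series of differentiable functions converges at one point and the formally differentiated series converges uniformly on $[0,1]$, then the original series converges uniformly to a differentiable function whose derivative is the differentiated series. Pointwise convergence of $V_2$ on $[0,1]$ was already established in Proposition~\ref{prop1}, so the only task is to verify uniform convergence of the formal derivative.

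First I would compute the derivative of the $i$-th summand. Writing $g_i(x) = F\circ\eta^i(x) - 2\hat{m}(A)$ with $F(x) = \sin^2(\pi x) + \sin^2(\pi x/2)$, the chain rule gives
\begin{equation*}
g_i'(x) = (\eta^i)'(x)\,\cdot\,2\pi\!\left(\sin(\pi\eta^i(x))\cos(\pi\eta^i(x)) + \tfrac{1}{2}\sin\!\left(\tfrac{\pi\eta^i(x)}{2}\right)\cos\!\left(\tfrac{\pi\eta^i(x)}{2}\right)\right),
\end{equation*}
which matches the claimed summands term by term. Since $\eta(x) = x/4 + 1/2$ has constant derivative $1/4$, the chain rule iterated $i$ times gives $(\eta^i)'(x) = 4^{-i}$, so this expression is independent of $x$ in its $(\eta^i)'$-factor.

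Next I would bound each summand uniformly. The trigonometric factor satisfies
\begin{equation*}
\bigl|\sin(\pi\eta^i(x))\cos(\pi\eta^i(x)) + \tfrac{1}{2}\sin(\pi\eta^i(x)/2)\cos(\pi\eta^i(x)/2)\bigr| \leq \tfrac{3}{2}
\end{equation*}
for every $x \in [0,1]$, so $|g_i'(x)| \leq 3\pi\cdot 4^{-i}$. The Weierstrass $M$-test then yields uniform convergence of $\sum_{i=0}^\infty g_i'(x)$ on $[0,1]$, since $\sum 3\pi\cdot 4^{-i} = 4\pi$ is finite.

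Combining uniform convergence of the differentiated series with pointwise convergence of $\sum g_i(x)$ (Proposition~\ref{prop1}), the standard theorem on interchange of limit and derivative (see e.g.\ Rudin) gives that $V_2$ is differentiable on $[0,1]$ and its derivative equals the sum of the $g_i'$, which is the stated formula. There is no serious obstacle here: the geometric decay $(\eta^i)' = 4^{-i}$ dominates everything, and the trigonometric factor is harmless because $\sin$ and $\cos$ are bounded by $1$.
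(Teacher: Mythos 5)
Your argument is correct and is exactly the intended one: the paper leaves this proof to the reader, and the standard term-by-term differentiation theorem (pointwise convergence of $\sum_i (F\circ\eta^i - 2\hat{m}(A))$ from Proposition~\ref{prop1}, plus uniform convergence of the differentiated series via $(\eta^i)'(x)=4^{-i}$ and the bound $3\pi\cdot 4^{-i}$, hence the Weierstrass $M$-test) is precisely the mechanism the paper itself relies on afterwards, as the same $3\pi\sum_{i\ge N}4^{-i}$ tail estimate reappears in Lemma~\ref{trunc}. No gaps.
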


We leave the proof for the reader.
\medskip

From the last proposition we get

$V_2'(x)=\sum_{i=0}^{+\infty} 2 \pi \frac{1}{4^i}\left(\sin\left(\pi \eta^i(x) \right)\cos(\pi \eta ^i (x)) +\frac{1}{2}\sin \left( \frac{\pi \eta^i(x)}{2} \right)\cos \left (\frac{\pi \eta ^i (x)}{2} \right)  \right).$

\begin{lemma} \label{trunc}
\(V_2'(x)=\varphi_N(x)+ \xi_N(x)\), where $|\xi_N(x)|\leq   3\pi \sum_{i=N}^{+\infty} |\frac{1}{4^i}|= \frac{\pi}{4^{N-1}},$

$\varphi_N(x)=\sum_{i=0}^{N} 2 \pi \frac{1}{4^i}\left(\sin\left(\pi \eta^i(x) \right)\cos(\pi \eta ^i (x)) +\frac{1}{2}\sin \left( \frac{\pi \eta^i(x)}{2} \right)\cos \left (\frac{\pi \eta ^i (x)}{2} \right)  \right).$
\end{lemma}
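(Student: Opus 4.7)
The plan is to proceed directly from the explicit series representation of $V_2'(x)$ established in the immediately preceding proposition, since once one has termwise uniform control, the lemma reduces to estimating the tail of a geometric series.

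First I would set $\xi_N(x) := V_2'(x) - \varphi_N(x)$, so that by the preceding proposition
\[
\xi_N(x) \;=\; \sum_{i \geq N} 2\pi \frac{1}{4^i}\left( \sin(\pi\eta^i(x))\cos(\pi\eta^i(x)) + \tfrac{1}{2}\sin\!\left(\tfrac{\pi\eta^i(x)}{2}\right)\cos\!\left(\tfrac{\pi\eta^i(x)}{2}\right) \right),
\]
where the summation starts at the first index not included in $\varphi_N$ (so this is the remainder of the series). The point is that the only way each summand depends on $x$ is through sines and cosines, so each factor in parentheses is pointwise bounded independently of $x$.

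Next I would bound each term in absolute value. Using the trivial estimate $|\sin\theta\cos\theta| \leq 1$ (in fact $\leq 1/2$, but the cruder bound suffices), the $i$-th summand is bounded by $2\pi\cdot\frac{1}{4^i}\cdot\bigl(1+\tfrac{1}{2}\bigr) = \frac{3\pi}{4^i}$, uniformly in $x\in[0,1]$. Summing the resulting geometric tail gives
\[
|\xi_N(x)| \;\leq\; 3\pi \sum_{i \geq N} \frac{1}{4^i} \;=\; 3\pi \cdot \frac{1/4^N}{1-1/4} \;=\; \frac{4\pi}{4^N} \;=\; \frac{\pi}{4^{N-1}},
\]
which is exactly the stated bound.

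There is essentially no hard step here: the lemma is a routine quantitative consequence of the preceding proposition, and the only point to be careful about is matching the indexing convention of $\varphi_N$ with the tail indexing so that the geometric sum really starts at $N$ and yields the clean constant $\pi/4^{N-1}$. The uniform convergence of the series for $V_2'$ (already built into the proof of the proposition giving the term-by-term differentiated series) is what permits writing $V_2'$ as $\varphi_N + \xi_N$ pointwise with the same bound valid in the sup norm.
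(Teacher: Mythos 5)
Your argument is correct and is precisely the routine tail estimate intended here: the paper itself leaves this lemma to the reader, and your bound $|\xi_N(x)|\leq 3\pi\sum_{i\geq N}4^{-i}=\pi/4^{N-1}$, obtained termwise from the series of the preceding proposition using $|(\eta^i)'(x)|=4^{-i}$ and the boundedness of the trigonometric factors, is exactly what is needed. The only cosmetic point is that since $\varphi_N$ sums up to $i=N$ the true remainder starts at $i=N+1$, so your estimate (summing from $i=N$) is in fact slightly generous, which only strengthens the stated inequality.
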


We leave the proof for the reader.

\medskip

$I_E$ denotes the indicator function of the interval $E$.

\begin{theorem} \label{tete} Taking  $\displaystyle V_2(x)=\lim_{n \rightarrow +\infty}V_2^{n*}(x) $ and $V_1(x)=V_2((x+1)/2)+A((x+1)/2)-\hat{m}(A)$, we get that
$V(x)=V_1(x)I_{[0,1/2)}(x)+ V_2(x) I_{[1/2,1]}(x).$
 is a calibrated subaction for $A$, when  $\hat{m}(A)=\frac{A(1/3)+A(2/3)}{2}=m(A)$.
\end{theorem}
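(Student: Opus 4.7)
The plan is to verify directly that $V$ as defined satisfies the calibrated subaction equation
\[V(x) = \max_{T(y)=x}[A(y) + V(y)] - \hat{m}(A).\]
By the characterization \eqref{fla} cited from \cite{BCLMS}, once this identity is in hand it follows automatically that $V$ is calibrated and that $\hat{m}(A) = m(A)$, so no separate argument for the value of $m(A)$ is required.

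First I would record the two recursive ``realizer'' identities
\[V_1(\tau_1(x)) + A(\tau_1(x)) = V_2(x) + \hat{m}(A), \qquad V_2(\tau_2(x)) + A(\tau_2(x)) = V_1(x) + \hat{m}(A).\]
The first is exactly Lemma~\ref{lemma_v_2_ex_}. The second is obtained from the first by applying the substitution $x\mapsto 1-x$ and invoking the reflection symmetries $V_1(x)=V_2(1-x)$ and $A(x)=A(1-x)$, together with the simple relation $\tau_2(x) = 1-\tau_1(1-x)$.

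Next I would prove the comparison claim that $V_1(x) \geq V_2(x)$ on $[0,1/2)$ and $V_2(x) \geq V_1(x)$ on $[1/2,1]$, so that the indicator formula defining $V$ agrees with $V(x) = \max\{V_1(x), V_2(x)\}$. Via $V_1(x)=V_2(1-x)$, this reduces to the single claim $V_2(1-x) \geq V_2(x)$ for $x \in [0,1/2]$, i.e.\ that $V_2$ attains larger values on the half that contains its global maximum $x=2/3$. I expect this to be the main obstacle of the proof: my plan is to propagate a local maximum property at $2/3$ globally by combining (i) the signs of the leading coefficients in the power series \eqref{khj} (from which $V_2''(2/3) < 0$ and $V_2$ peaks at $2/3$), (ii) the self-similar functional equation of Lemma~\ref{lemma_v_2ex1}, which allows one to express $V_2(x) - V_2(\eta(x))$ explicitly for the contraction $\eta$ toward $2/3$, and (iii) the uniform derivative estimate of Lemma~\ref{trunc} to make the truncation error quantitative.

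With the realizer identities and the max characterization established, the $\geq$ direction of the subaction equation is immediate since $V(y) \geq V_j(y)$ for $j=1,2$. For the $\leq$ direction one must rule out the two non-realizer alternatives, which, after using the realizer identities to eliminate $A(\tau_i(x))$ and collapsing via symmetry, reduces to the single inequality
\[V_2(x/2) - V_2(1-x/2) \leq \max\{V_2(1-x) - V_2(x),\; 0\} \qquad \text{for all } x \in [0,1].\]
Once the comparison claim of the previous paragraph is in force, this splits into two elementary cases: if $x \in [0,1/2]$ then $x/2 \in [0,1/4]\subset[0,1/2]$, so $V_2(1-x/2) \geq V_2(x/2)$ and the left side is already nonpositive; if $x \in [1/2,1]$ then $V_2(1-x) \leq V_2(x)$ forces the right side to be zero, and the required left-side bound $V_2(x/2) \leq V_2(1-x/2)$ is again the comparison claim applied at $x/2 \in [1/4,1/2]$. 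This closes the verification.
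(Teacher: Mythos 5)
Your overall route is the same as the paper's: establish the two realizer identities (the first is Lemma~\ref{lemma_v_2_ex_}; the second is nothing but the definition of $V_1$ in the statement, so no symmetry argument is even needed there), then reduce everything to the comparison $V_2(1-u)\geq V_2(u)$ for $u\in[0,1/2]$, which is exactly what makes $V$ coincide with $\max\{V_1,V_2\}$ and closes the subaction equation. Your final step ruling out the non-realizer branches is correct but redundant: since $\tau_1(x)\in[0,1/2)$ and $\tau_2(x)\in[1/2,1]$, the indicator definition of $V$ already forces $V(\tau_1(x))=V_1(\tau_1(x))$ and $V(\tau_2(x))=V_2(\tau_2(x))$, so once the comparison holds, the two realizer identities alone give $\max_{T(y)=x}[A(y)+V(y)]=\max\{V_1(x),V_2(x)\}+\hat{m}(A)=V(x)+\hat{m}(A)$.

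The one genuine gap is the comparison claim itself, which you correctly single out as the main obstacle but then only sketch. ``Propagating a local maximum at $2/3$'' is not an argument: the sign information at $2/3$ extracted from \eqref{khj} is purely local, and the functional equation of Lemma~\ref{lemma_v_2ex1} merely re-expresses the series; neither by itself yields the global inequality $V_2(1-u)\geq V_2(u)$ on all of $[0,1/2]$. The paper's actual mechanism is a global monotonicity estimate: set $\gamma(u)=V_2(u)-V_2(1-u)$, note $\gamma(1/2)=0$, and show $\gamma'(u)=V_2'(u)+V_2'(1-u)>0$ on $[0.1,0.9]$ by bounding the derivative with the explicit finite truncation $\varphi_N$ of Lemma~\ref{trunc}, taking $N=4$ so the error is at most $2\pi/4^{N-1}$; the remaining intervals $[0,0.1]$ and $[0.9,1]$ are then handled with the sup-norm truncation bound of Lemma~\ref{lemma_maj}. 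You do cite Lemma~\ref{trunc} as an ingredient, but some explicit quantitative computation of this kind (a concrete finite trigonometric sum shown to dominate the truncation error) is unavoidable, and your proposal does not carry it out, so as written the crux of the theorem remains unproved.
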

\begin{proof} We have to show that
$
\max_{T(y)=x}[A(y)+V(y)]=\max \{V_1(x/2)+A(x/2),V_2((x+1)/2) + A((x+1)/2) \}.
$
As $V_1(u/2)+A(u/2)=V_2(u)+\hat{m}(A),$
and, \(V_1(x)=V_2(1-x)\), then, we have to show that
\begin{equation} \label{cort}
\max_{T(y)=x}[A(y)+V(y)]= \max \{V_2(x)+\hat{m}(A), V_2(1-x)+\hat{m}(A) \}
\end{equation}
We will show  first that if $u \in[0,1/2]$, then
$$V_2(u)+\hat{m}(A) \leq V_2(1-u)+\hat{m}(A)=V_1(u)+\hat{m}(A) . $$
Denote $\gamma(u)=V_2(u)-V_2(1-u)$.
By Lemma~\ref{trunc} we get
\[\gamma'(u)=V_2'(u)+V_2'(1-u)= \varphi_N(1-u)+\varphi_N(u) + (\xi_N(1-u)+\xi(u))\]
\[\geq \varphi_N(1-u)+\varphi_N(u) - 2\frac{\pi}{4^{N-1}}.\]
Taking $ N=4 $ it is easy to se that if $u \in [0.1,0.9]$  then $\gamma'(u)> 0$.
The function $\gamma$ is monotone increasing from $ 0.1$ to  $0.9$ and $\gamma(1/2)=0$.  Then $\gamma$ is negative on the interval $[0.1,0.5]$.
A similar argument can also handle the case  $x\in[0,0.1]$. We use Lemma  \ref{lemma_maj}, the fact that $\gamma(u)=V_2(u)-V_2(1-u)$ and the control of the error  $|\epsilon_N(x) |$.  Then,  finally we get that $\gamma$ is also negative
in $  [0,0.1]$  and is positive for  $x\in [0.9,1] $.
From the above we get
$\max_{T(y)=u}[A(y)+V(y)]= V_2(1-u)+\hat{m}(A),\quad u \in[0,1/2]$
and
$\max_{T(y)=u}[A(y)+V(y)]= V_2(u)+\hat{m}(A),\quad u \in[0,1/2].$
Therefore, for all  $x \in [0,1]$ we get
$\max_{T(y)=x}[A(y)+V(y)]= V(x)+ \hat{m}(A)$
Then,  $V $ is a calibrated subaction.
\end{proof}
Now we will express $V_2$ in power series. Our final result will be given by expression (\ref{power}).
Using the property \(\sin^2(x)=\frac{1-\cos(2\pi x)}{2}\),
we get
\begin{equation*}^{V_2(x+2/3)=\frac{1}{2}
\sum_{i=0}^{+\infty}\left( \sin\left(\frac{4\pi}{3}\right)\sin\left(2\pi\left(-\frac{1}{2}\right)^ix\right)- \cos\left(\frac{4\pi}{3}\right)(\,\cos\left(2\pi\left(-\frac{1}{2}\right)^i\,x\,-1\,)\right)\right)} .\end{equation*}

Now, define $$^{M(x)=\frac{\sin(4\pi/3)}{2}\sum_{i=0}^{+\infty}(\,\sin(2\pi(-1/2)^ix)\,-\, \sin(0)\,)}$$
and
$$^{Q(x)= \frac{-\cos(4\pi/3)}{2}\sum_{i=0}^{+\infty}(\,\cos(2\pi(-1/2)^i\,x\,-\, \cos(0)\,).} $$
We will express later $V_2$ as $V_2(x)=Q(x-2/3)+M(x-2/3).$
\begin{lemma} $ M$  and $ Q$ are uniformly convergent in each interval $[-a,a]$.
\end{lemma}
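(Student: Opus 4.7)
The plan is to apply the Weierstrass M-test to both series on the compact interval $[-a,a]$. The two series have the same structure — a geometric-type contraction $(-1/2)^i$ sitting inside a smooth periodic function evaluated at $0$ — so the same strategy handles both, the only difference being the order of vanishing at $0$ of $\sin$ versus $\cos(\cdot)-1$.

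For $M$, note first that $\sin(0)=0$, so each summand reduces to $\frac{\sin(4\pi/3)}{2}\sin\bigl(2\pi(-1/2)^i x\bigr)$. Using the elementary inequality $|\sin t|\le |t|$, I would bound
\[
\left|\tfrac{\sin(4\pi/3)}{2}\sin\bigl(2\pi(-1/2)^i x\bigr)\right|\;\le\;\tfrac{1}{2}\cdot 2\pi\cdot 2^{-i}|x|\;\le\;\pi a\cdot 2^{-i}
\]
uniformly on $[-a,a]$. Since $\sum_{i\ge 0}\pi a\,2^{-i}=2\pi a<\infty$, the Weierstrass M-test gives uniform (in fact absolute) convergence of $M$ on $[-a,a]$.

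For $Q$, each summand is $\tfrac{-\cos(4\pi/3)}{2}\bigl(\cos(2\pi(-1/2)^i x)-1\bigr)$. Here I would invoke the sharper inequality $|1-\cos t|\le t^2/2$ (which follows from the Taylor remainder or from $1-\cos t = 2\sin^2(t/2)$ together with $|\sin s|\le |s|$). This gives
\[
\left|\tfrac{\cos(4\pi/3)}{2}\bigl(\cos(2\pi(-1/2)^i x)-1\bigr)\right|\;\le\;\tfrac{1}{4}(2\pi)^2\, 4^{-i} x^2\;\le\;\pi^2 a^2\cdot 4^{-i},
\]
and $\sum_{i\ge 0}\pi^2 a^2\,4^{-i}=\tfrac{4}{3}\pi^2 a^2<\infty$, so again Weierstrass M-test applies.

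There is no real obstacle here: both series admit summable majorants independent of $x\in[-a,a]$, and the estimates are the standard first- and second-order vanishing estimates for $\sin$ and $1-\cos$ at the origin. The only choice to be made is to use $|1-\cos t|\le t^2/2$ rather than the cruder $|1-\cos t|\le |t|$ in the $Q$ case; the cruder bound would also work (since $\sum 2^{-i}<\infty$), but the quadratic bound is what will be useful later if one wants to bound the tail $\sum_{i\ge N}$ by $O(4^{-N})$ in a quantitative estimate analogous to Lemma~\ref{lemma_maj}.
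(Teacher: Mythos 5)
Your proof is correct and follows essentially the same route as the paper: bound each summand by a term of a convergent geometric series (using the vanishing of $\sin$ and $\cos(\cdot)-1$ at $0$ on the compact interval $[-a,a]$) and conclude uniform convergence by the Weierstrass M-test, which is exactly the paper's Lipschitz-comparison argument for $M$ with ``an analogous argument'' for $Q$. Your only deviation is cosmetic: for $Q$ you use the sharper quadratic estimate $|1-\cos t|\le t^2/2$ (giving a $4^{-i}$ majorant) where the paper's linear bound (giving $2^{-i}$) already suffices.
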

\begin{proof}  As the function $\sin$ is
Lipschitz, then, there is a constant $C $, such that,
\[|\sin(x)-\sin(y)|\leq C|x-y| \leq 2aC,\]
and
$\sum_{i=0}^{+\infty}\left|\sin\left(2\pi \left( -\frac{1}{2} \right)^i x\right) \right| \leq \sum_{i=0}^{+\infty}2\,a\,C\,\left|2\pi \left( -\frac{1}{2} \right)^i\right| \leq +\infty. $
For $Q$ we use an analogous argument.
\end{proof}
As
$\cos(x)=\sum_{k=0}^{+\infty}\frac{(-1)^{k}x^{2k}}{(2k)!}$
one can write $Q$  as
%\begin{equation*}Q(x)=\frac{-\cos(4\pi/3)}{2} \sum_{i=0}^{+\infty}\left( \sum_{k=1}^{+\infty}\left(\frac{(-1)^k (2\pi x)^{2k}}{2^{2ik}(2k)!}\right)\right) ,\end{equation*}
%and, therefore
\begin{equation}\label{sin2:cosexp}
^{Q(x)=\frac{-\cos(4\pi/3)}{2} \sum_{k=1}^{+\infty} \sum_{i=0}^{+\infty}\left(\frac{(-1)^k(2\pi x)^{2k}}{2^{2ik}(2k)!}\right).}
\end{equation}
Finally, we get $Q(x)=\frac{-\cos(4\pi/3)}{2}\sum_{k=1}^{+\infty}\frac{(-1)^k(2\pi x)^{2k}}{(2k)!}\frac{2^{2k}}{2^{2k}-1}.$
Proceeding in analogous way we get
$M(x)=\frac{\sin(4\pi/3)}{2}\sum_{k=0}^{+\infty}\frac{(-1)^k(2\pi x)^{2k+1}}{(2k+1)!}\frac{2^{2k+1}}{2^{2k+1}+1}.$

\begin{proposition}
For a fixed $0<\varepsilon<1 $, if $x \in [-1+\varepsilon,1-\varepsilon]$, we can exchange the order in the sum of (\ref{sin2:cosexp}) and we  get
$$^{Q(x)=\frac{-\cos(4\pi/3)}{2}\sum_{k=1}^{+\infty}\frac{(-1)^k(2\pi x)^{2k}}{(2k)!}\frac{2^{2k}}{2^{2k}-1}.}$$
\end{proposition}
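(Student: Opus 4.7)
The plan is to justify the interchange by Fubini's theorem for double series, applied to the absolutely convergent sum in (\ref{sin2:cosexp}). First I would fix $\varepsilon \in (0,1)$ and consider the double series of absolute values
\[
\sum_{k=1}^{+\infty} \sum_{i=0}^{+\infty} \left| \frac{(-1)^k(2\pi x)^{2k}}{2^{2ik}(2k)!} \right| \;=\; \sum_{k=1}^{+\infty} \frac{(2\pi |x|)^{2k}}{(2k)!}\,\sum_{i=0}^{+\infty} \frac{1}{4^{ik}}.
\]

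Second, for fixed $k\geq 1$ the inner $i$-sum is a convergent geometric series with ratio $1/4^k$, whose value equals $\tfrac{4^k}{4^k-1}=\tfrac{2^{2k}}{2^{2k}-1}$. This quantity is bounded above by $4/3$ uniformly in $k\geq 1$. Combining this with the hypothesis $|x|\leq 1-\varepsilon$ yields
\[
\sum_{k=1}^{+\infty} \sum_{i=0}^{+\infty} \left| \frac{(-1)^k(2\pi x)^{2k}}{2^{2ik}(2k)!} \right| \;\leq\; \frac{4}{3}\sum_{k=1}^{+\infty}\frac{(2\pi(1-\varepsilon))^{2k}}{(2k)!}\;\leq\; \frac{4}{3}\bigl(\cosh(2\pi(1-\varepsilon))-1\bigr) \;<\; +\infty,
\]
so the double sum converges absolutely, and in fact uniformly, on $[-1+\varepsilon,\,1-\varepsilon]$.

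Third, once absolute convergence is established, Fubini's theorem for series (equivalently the Weierstrass M-test together with unconditional convergence) allows me to perform the two iterated sums in either order. In particular, for each $k\geq 1$ the factor $(-1)^k(2\pi x)^{2k}/(2k)!$ is independent of $i$, so I can pull it out of the $i$-sum and replace $\sum_{i=0}^{+\infty} 2^{-2ik}$ by the closed form $\tfrac{2^{2k}}{2^{2k}-1}$. Multiplying through by $-\cos(4\pi/3)/2$ delivers exactly the claimed expression.

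The main obstacle is really just the uniform absolute-convergence bound, and that is handled by the elementary observation $4^k/(4^k-1)\leq 4/3$ for all $k\geq 1$, which reduces the issue to the convergence of the entire-function series $\sum_{k\geq 1}(2\pi|x|)^{2k}/(2k)!$. The restriction $|x|\le 1-\varepsilon$ is then not strictly required for convergence of the double series, but it gives a convenient uniform bound on compact subsets, which is why the conclusion is stated on $[-1+\varepsilon,1-\varepsilon]$.
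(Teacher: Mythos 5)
Your proof is correct, and the underlying strategy is the same as the paper's: establish absolute convergence of the double series and invoke Fubini (interchange of summation) for series. The difference lies in how the absolute-value bound is organized, and it is worth noting. The paper bounds each inner sum over $k$ at fixed $i$: it uses a constant $M$ with $\frac{(2\pi)^{2k}}{(2k)!}\le M$ together with $2^{-2ik}\le 2^{-i}$, so the $k$-sum is dominated by the geometric series $M\sum_k x^{2k}=\frac{Mx^2}{1-x^2}$, which genuinely requires $|x|<1$, and then the $i$-sum is controlled by $\sum_i 2^{-i}$; the restriction to $[-1+\varepsilon,1-\varepsilon]$ is thus built into the estimate itself. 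You instead sum the geometric series in $i$ exactly at fixed $k$, bound the resulting factor $\frac{2^{2k}}{2^{2k}-1}$ uniformly by $\frac{4}{3}$, and dominate the $k$-sum by $\cosh\bigl(2\pi(1-\varepsilon)\bigr)-1$, an entire-function bound. This is slightly sharper and more transparent: it shows the double series is absolutely convergent for \emph{every} real $x$, so the hypothesis $|x|\le 1-\varepsilon$ only serves to give a uniform bound on a compact set, a point your last paragraph correctly makes and which the paper's own estimate obscures. Your closing step (pulling the $i$-independent factor out of the inner sum and replacing $\sum_i 2^{-2ik}$ by $\frac{2^{2k}}{2^{2k}-1}$) matches the paper's conclusion exactly.
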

\begin{proof} Note that if $|x|<1$ there exists a constant $K$  (the coefficients on the power series of $\cos$ are decreasing) such that
$$^{\left|\sum_{k=1}^{+\infty}\frac{(-1)^k (2\pi x)^{2k}}{2^{2ik}(2k)!}\right| \leq  \sum_{k=1}^{+\infty}\left|\frac{(2\pi x)^{2k}}{2^{2ik}(2k)!}\right| \leq \frac{1}{2^i}\sum_{k=1}^{+ \infty}\left (K\, x^{2k} \right)=}$$
$$^{\frac{K}{2^i} \left( \frac{x^2}{1-x^2}\right) \leq \frac{K}{2^i}\left( \frac{|1-\varepsilon|^2}{1-|1-\varepsilon|^2}\right).}$$
We can exchange the order on the double sum: $\forall x \in  [-1+\varepsilon,1-\varepsilon]$,
$$^{\sum_{i=0}^{+ \infty}\sum_{k=1}^{+\infty}\left|\frac{(-1)^k (2\pi x)^{2k}}{2^{2ik}(2k)!}\right| \leq \sum_{i=0}^{+ \infty}\frac{K}{2^i} \left( \frac{x^2}{1-x^2} \right) \leq 2K\,\left( \frac{|1-\varepsilon|^2}{1-|1-\varepsilon|^2}\right) < +\infty.}$$
Note that $(x-2/3)\in[-2/3,1/3]$.
Then,
\begin{equation} \label{exp1} ^{Q(x-2/3)=\frac{-\cos(4\pi/3)}{2}\sum_{k=1}^{+\infty}\frac{(-1)^k(2\pi (x-2/3))^{2k}}{(2k)!}\frac{2^{2k}}{2^{2k}-1}.}\end{equation}
In the same way we get
\begin{equation} \label{exp2}
^{M(x-2/3)=\frac{\sin(4\pi/3)}{2}\sum_{k=0}^{+\infty}\frac{(-1)^k(2\pi (x-2/3))^{2k+1}}{(2k+1)!}\frac{2^{2k+1}}{2^{2k+1}+1}.}
\end{equation}
\end{proof}
As
$V_2(x+2/3)= M(x) + Q(x)$, then, $
V_2(x)=Q(x-2/3)+M(x-2/3).$
Finally, from (\ref{exp1}) and (\ref{exp2}) the power series expression of  $V_2$ around $2/3$ is given by
$$^{V_2(x)= \frac{\sin(4\pi/3)}{2}\sum_{k=0}^{+\infty}\frac{(-1)^k(2\pi  \left( x- \frac{2}{3} \right))^{2k+1}}{(2k+1)!} \frac{2^{2k+1}}{2^{2k+1}+1} -}$$
\begin{equation} \label{power}  ^{\frac{\cos(4\pi/3)}{2}\sum_{k=1}^{+\infty}\frac{(-1)^k(2\pi\left( x- \frac{2}{3} \right))^{2k}}{(2k)!}\frac{2^{2k}}{2^{2k}-1}}
\end{equation}

We can express the power series of $V_1$ around $1/3$ from $V_1(x)=V_2(1-x)$.

\subsection{The involution kernel for a map with a indifferent fixed point}  \label{fixind}

In this section, we show some results claimed on Section \ref{WKe}.

Consider $f:[0,1]\to [0,1]$, where
$$\left\{\begin{array}{l}
f(y)= \frac{y}{1-y},\mbox{ if },\, 0\leq y \leq \frac{1}{2},\\
f(y)= 2- \frac{1}{y},\mbox{ if }, \, \frac{1}{2}< y\leq 1,\\
\end{array}
\right.$$
%\end{eqnarray}

and the potential $A(y) =  \log f '(y)$, which is given by the
expression
$$\left\{\begin{array}{l}
f'(y)= \frac{1}{(1-y)^2},\mbox{ if },\, 0\leq y \leq \frac{1}{2},\\
f'(y)= \frac{1}{y^2},\mbox{ if }, \, \frac{1}{2}< y\leq 1.\\
\end{array}
\right.$$
We want to derive the involution kernel for $A$. We claim the involution kernel for such $A$ is $ W(y,x) = 2 \log (x + y - 2 xy).$
We will show that

$\,\,\,\,\,\,\,\,\,A(F^{-1}(y,x))+    W( F^{-1}(y,x) )- W(y,x) =A(y).$

We denote $R_0\subset [0,1]^2$ the cylinder $0< y< 1/2$, and $R_1\subset [0,1]^2$ the cylinder $1/2< y< 1$. Restricted to $R_0$, the inverse $F^{-1}(y,x)$ is given by
$F^{-1}(x,y) =(\frac{y}{ 1-y}, \frac{x}{1+ x}).$ From this we get, for $(y,x) \in R_0$,
$ A(F^{-1}(y,x))= \log (1+x)^{-2}.$
Moreover, in this case, for $(y,x)$ in  the cylinder $R_0$,
$$^{ W( F^{-1}(y,x) )= 2 \,\log \left( \frac{y}{1-y} + \frac{x}{1+x} - 2\,\frac{\,y\, \,\,x}{(1-y)\,\,
(1+x)}\,\right)= \, 2 \log \left( \frac{x + y - 2 x\, y  }{(1-y)\,\, (1+x)} \right).}$$
Therefore, for $0<y<1/2$, we have
$$^{A(F^{-1}(y,x))+    W( F^{-1}(y,x) )- W(y,x) =}$$
$$  ^{\log (\, (1+x)^{-2}\,  \frac{( x+y - 2 x\, y)^{-2} }{(1-y)^{-2} \, (1 + x)^{-2}} \, \frac{1}{   ( x+y - 2 x\, y)^{-2}   } \,)=
 2\, \log (1-y)= A(y).} $$
Now we have to consider the cylinder $R_1$, where $ 1/2< y<1$. In this case,
$ F^{-1}(y,x)= ( 2- \frac{1}{y} , \frac{1}{2-x}).$
Therefore,
$F^{-1}(y,x)= \, 2 \, \log (2-x),$ and,
$$^{ W(   F^{-1}(y,x) )=2\,  \log(\,  \frac{2 y -1}{y} \,-\, \frac{1}{2-x} \, + \, 2 \frac{(2 \, y -1)}{y \, (2-x)}\,)=  }$$
$$^{2\, \log ( \, \frac{(2 y-1)\, (2-x) + y - 2\, (2y-1)}{y \, (2-x)}  \,)\,=\, 2\, \log( \,\frac{x+y - 2\, x\, y}{y \, (2-x)} \,)=}$$

Finally, for $1/2<y<1$, we have
$$^{A(F^{-1}(y,x))+    W( F^{-1}(y,x) )- W(y,x) =}$$
$$^{  \log (\, (2-x)^{-2}\,  \frac{( x+y - 2 x\, y)^{-2} }{y^{-2} \, (2- x)^{-2}} \, \frac{1}{   ( x+y - 2 x\, y)^{-2}   } \,)=
 2\, \log y= A(y).} $$
This shows that
$ W(y,x) = 2 \log (x + y - 2 xy)$
is the involution kernel for $\log f'(y)$.

\smallskip

We thank the referee for his careful reading which helped us to improve the reading of the text

\end{document}